\theoremstyle{plain}
\newtheorem{theorem}{Theorem}[section]
\newtheorem*{theorem*}{Theorem}
\newtheorem{proposition}[theorem]{Proposition}
\newtheorem{corollary}[theorem]{Corollary}
\newtheorem{lemma}[theorem]{Lemma}
\theoremstyle{definition}
\newtheorem{definition}[theorem]{Definition}
\newtheorem{remark}[theorem]{Remark}
\newcommand{\enm}[1]{\ensuremath{#1}}          %
\newcommand{\op}[1]{\operatorname{#1}}
\newcommand{\cal}[1]{\mathcal{#1}}
\newcommand{\ZZ}{\enm{\mathbb{Z}}}
\newcommand{\PP}{\enm{\mathbb{P}}}
\newcommand{\Aa}{\enm{\cal{A}}}
\newcommand{\Bb}{\enm{\cal{B}}}
\newcommand{\Ee}{\enm{\cal{E}}}
\newcommand{\Ff}{\enm{\cal{F}}}
\newcommand{\Gg}{\enm{\cal{G}}}
\newcommand{\Hh}{\enm{\cal{H}}}
\newcommand{\Ii}{\enm{\cal{I}}}
\newcommand{\Kk}{\enm{\cal{K}}}
\newcommand{\Mm}{\enm{\cal{M}}}
\newcommand{\Nn}{\enm{\cal{N}}}
\newcommand{\Oo}{\enm{\cal{O}}}
\newcommand{\Ss}{\enm{\cal{S}}}
\newcommand{\Vv}{\enm{\cal{V}}}
\newcommand{\Ww}{\enm{\cal{W}}}
\renewcommand{\phi}{\varphi}
\renewcommand{\theta}{\vartheta}
\renewcommand{\epsilon}{\varepsilon}
\newcommand{\Hom}{\op{Hom}}
\newcommand{\Ext}{\op{Ext}}
\renewcommand{\to}[1][]{\xrightarrow{\ #1\ }}
\newcommand{\old}[1]{}
\begin{document}

\title[Globally generated vector bundles]{On higher rank globally generated vector bundles over a smooth quadric threefold}
\author{E. Ballico, S. Huh and F. Malaspina}
\address{Universit\`a di Trento, 38123 Povo (TN), Italy}
\email{edoardo.ballico@unitn.it}
\address{Department of Mathematics, Sungkyunkwan University \\
Cheoncheon-dong, Jangan-gu \\
Suwon 440-746, Korea}
\email{sukmoonh@skku.edu}
\address{Politecnico di Torino, Corso Duca degli Abruzzi 24, 10129 Torino, Italy}
\email{francesco.malaspina@polito.it}
\keywords{higher rank vector bundles, globally generated, smooth quadric threefold}
\thanks{The second author is supported by Basic Science Research Program 2012-0002904 through NRF funded by MEST}
\subjclass[msc2000]{Primary: {14F99}; Secondary: {14J99}}

\begin{abstract}
We give a complete classification of globally generated vector bundles of rank 3 on a smooth quadric threefold with $c_1\leq 2$ and extend the result to arbitrary higher rank case. We also investigate the existence of globally generated indecomposable vector bundles, and give the sufficient and necessary conditions on numeric data of vector bundles for indecomposability.
\end{abstract}

\maketitle
\section{Introduction}
Globally generated vector bundles on projective varieties play an important role in algebraic geometry. If they are non-trivial they must have strictly positive first Chern class.  Globally generated vector bundles on projective spaces with low first Chern class have been investigated in several papers. If $c_1(\Ee)=1$ then it is easy to see that modulo trivial summands we have only $\Oo_{\PP^n}(1)$ and $T\PP^n(-1)$.  The classification of rank $r$ globally generated vector bundles with $c_1=2$ is settled in \cite{SU}. In \cite{huh} the second author carried out the case of rank two  with $c_1=3$ on $\mathbb P^3$ and in \cite{ce} the authors continued the study until $c_1\leq 5$. This classification was extended to any rank in \cite{m} and to any $\mathbb P^n$ ($n\geq 3$) in \cite{am} and \cite{SU2}.  In \cite{e} are shown the possible Chern classes of rank two globally generated vector bundles
on  $\mathbb P^2$.

Let $Q$ be a smooth quadric threefold over an algebraically closed field of characteristic zero. In our previous  paper \cite{BHM} we investigated the existence of globally generated vector bundles of rank 2 on $Q$ with $c_1\leq 3$. The aim of this paper is to study the existence of globally generated indecomposable vector bundles of higher rank with $c_1\leq 2$.  First we have a complete classification of globally generated vector bundles of rank 3 on $Q$ with the first Chern class 1 or 2 :
\begin{theorem}
Let $\Ee$ be a non-split vector bundle of rank 3 on $Q$ with $c_1\leq 2$. $\Ee$ is globally generated if and only if $\Ee$ admits an exact sequence,
$$0\to \Oo_Q^{\oplus 2} \to \Ee \to \Ii_C(c_1) \to 0,$$
where $C$ is a smooth irreducible curve of degree $d$ and genus $g$:
\begin{align*}
c_1=1 \Rightarrow & ~~(d,g)=(1,0)~;~ \Ee \simeq \Sigma \oplus \Oo_Q\\
                      &~~(d,g)=(2,0)~;~ \Ee\simeq \Aa_P\\
c_1=2 \Rightarrow &~~(d,g)=(3,0)~;~ \Ee\simeq \Sigma \oplus \Oo_Q(1)\\
&~~ (d,g)\in \{(4,0), (4,1), (5,1), (6,2), (8, 5)\} , \text{or}
\end{align*}
$\Ee$ is isomorphic to $\Aa_P^{\vee}(1)$ or a pull-back of $\Nn_{\PP^3}(1)\oplus \Oo_{\PP^3}$ whose associated curve $C$ is a disjoint union of two conics.
\end{theorem}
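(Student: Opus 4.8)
The plan is to prove the stated equivalence by establishing both implications, the forward one (``$\Ee$ globally generated $\Rightarrow$ the displayed structure'') being where the content lies.

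\emph{The reverse implication.} Suppose $\Ee$ sits in $0\to\Oo_Q^{\oplus 2}\to\Ee\to\Ii_C(c_1)\to 0$ with $C$ a curve as in the list. One first checks that $\Ii_C(c_1)$ is globally generated: for $c_1=1$ a line or a smooth conic on $Q$ is the scheme-theoretic intersection of $Q$ with a linear subspace, so this is immediate; for $c_1=2$ each curve on the list is cut out on $Q$ by quadrics (e.g.\ the degree $8$, genus $5$ curves are complete intersections of $Q$ with two further quadrics), and one verifies generation along $C$ and away from $C$ directly. Since $H^1(\Oo_Q)=0$, the map $H^0(\Ee)\to H^0(\Ii_C(c_1))$ is onto, and a fibrewise diagram chase shows $\Ee$ is globally generated. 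The named bundles $\Sigma\oplus\Oo_Q$, $\Aa_P$, $\Aa_P^{\vee}(1)$ and the pull-back of $\Nn_{\PP^3}(1)\oplus\Oo_{\PP^3}$ are globally generated directly from their constructions; one also checks each extension is locally free (this is exactly where the local complete intersection, or smoothness, hypothesis on $C$ enters) and non-split.

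\emph{The forward implication: producing the sequence.} Let $\Ee$ be globally generated, non-split, of rank $3$ with $c_1\le 2$; then $c_1\ge 1$ (a globally generated bundle with $c_1=0$ is trivial) and $h^0(\Ee)\ge 4$ (as $\Ee$ is globally generated and non-split). Choose two general sections $s_1,s_2\in H^0(\Ee)$. By a standard argument (available because $\Ee$ has rank $3$ on the threefold $Q$) the subsheaf they generate is isomorphic to $\Oo_Q^{\oplus 2}$ and is saturated, so the quotient is $\Ii_C(c_1)$ for the locally Cohen--Macaulay curve $C=\{s_1\wedge s_2=0\}$; by Bertini and generic smoothness $C$ may be taken smooth. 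This yields the displayed sequence, and $c_2(\Ee)\ne 0$ (otherwise it splits, contradicting non-splitness), so $C\ne\emptyset$. Taking Chern classes gives $d=\deg C=c_2(\Ee)$ and expresses $g=p_a(C)$ through $c_1,c_2(\Ee),c_3(\Ee)$. Finally, since $\Ee$ is globally generated so is $\Ii_C(c_1)$: for $c_1=1$ this forces $C$ into a plane, hence $C$ is a line or a conic; for $c_1=2$ it forces $C$ to be cut out on $Q$ by quadrics.

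\emph{Bounding $(d,g)$ and the exceptional bundles --- the main obstacle.} For $c_1=2$ one first obtains $d\le 8$ by restricting the sequence to a general hyperplane section $S=Q\cap H\cong\PP^1\times\PP^1$ and analysing the globally generated twist of the ideal sheaf of the finite scheme $C\cap S$ on $S$ (or, equivalently, by splitting off a nowhere-zero section of $\Ee$ and invoking the rank $2$ classification of \cite{BHM}); together with $c_3(\Ee)\ge 0$ and the cohomology vanishing on $Q$ this leaves only finitely many $(d,g)$. The delicate part is to exclude the remaining spurious pairs (notably every $(7,g)$, and the wrong genera for $d=4,5,6$) and to show that $(d,g)=(3,0)$ admits $\Sigma\oplus\Oo_Q(1)$ as its only non-split locally free extension; here I would combine the inclusion $\Ext^1(\Ii_C(c_1),\Oo_Q^{\oplus 2})\hookrightarrow H^0(C,\det N_{C/Q}\otimes\Oo_C(-c_1))^{\oplus 2}$ (which controls local freeness of the extension and forces positivity of that line bundle on $C$) with the geometry of curves on $Q$. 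One must also allow $C$ reducible: when global generation of $\Ii_C(c_1)$ forces $C$ to break up, the analysis shows the only possibility is a disjoint union of two conics, and a further identification produces $\Aa_P^{\vee}(1)$ and the pull-back of $\Nn_{\PP^3}(1)\oplus\Oo_{\PP^3}$. I expect the degree--genus bound for curves on $Q$ cut out by quadrics, together with the identification of these sporadic bundles, to be the hardest steps.
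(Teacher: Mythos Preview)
Your overall architecture matches the paper's: the sequence $0\to\Oo_Q^{\oplus 2}\to\Ee\to\Ii_C(c_1)\to 0$ with $C$ smooth is the starting point (the paper cites B\u{a}nic\u{a}--Coand\u{a} and Chang for this), and the problem reduces to determining which $(d,g)$ are compatible with $\Ii_C(c_1)$ being globally generated on $Q$, plus identifying the bundles in the low-degree cases. The bound $d\le 8$ is immediate once you know $\Ii_C(2)$ is spanned: $C$ lies in a complete intersection of two quadric sections of $Q$, so no hyperplane restriction is needed.

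The genuine gap is in your proposed mechanism for ``excluding the remaining spurious pairs''. The inclusion of $\Ext^1(\Ii_C(c_1),\Oo_Q)$ into $H^0(C,\omega_C(3-c_1))$ and the attendant positivity only rules out \emph{line} components (for a line $\ell$ one has $\omega_\ell(1)\cong\Oo_\ell(-1)$). It does \emph{not} exclude a smooth rational quintic, a rational or elliptic sextic, or any connected degree $7$ curve: for all of these $\omega_C(1)$ has non-negative degree and your local-freeness criterion is satisfied. The paper handles these cases with a different obstruction, namely trisecant lines: if $C$ has a trisecant then $\Ii_C(2)$ cannot be spanned, and the classical count $t(d,g)=\binom{d-2}{3}-g(d-4)$ gives $t(5,0)=1$, $t(6,0)=4$, $t(6,1)=2$, forcing $g=1$ when $d=5$ and $g=2$ when $d=6$. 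The case $d=7$ is excluded by liaison: $C$ is linked in the complete intersection of two quadrics to a line $L$, and the resulting exact sequence has quotient $\omega_L(1)\cong\Oo_L(-1)$, which is not spanned. Conic components for $c_2\ge 5$ are ruled out by yet another argument, restricting to a hyperplane section containing the conic. Without something playing the role of the trisecant formula (or an equivalent obstruction to spannedness of $\Ii_C(2)$ for the wrong genera), your plan cannot close the classification.

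A minor point: your parenthetical suggestion to ``split off a nowhere-zero section of $\Ee$ and invoke the rank $2$ classification of \cite{BHM}'' does not work as stated, since the quotient by a single section is only torsion free (it is locally free precisely when $c_3=0$), so the rank $2$ bundle classification does not directly apply.
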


Here, $\Sigma$ is the spinor bundle of $Q$, $\Aa_P$ is the pull-back of $T\PP^3(-1)$ along the linear projection $Q \to \PP^3$ with a center $P\in \PP^4\setminus Q$ and $\Nn_{\PP^3}$ is a null-correlation bundle on $\PP^3$. The vector bundle $\Ee$ has the Chern classes $(c_2,c_3)$ with $c_2=\deg (C)$ and $c_3=2g-2+d(3-c_1)$.

Secondly, we extend the theorem to vector bundles of arbitrary higher rank and investigate their indecomposability.

\begin{theorem}
There exists a globally generated and indecomposable vector bundle of rank $r\geq 3$ on $Q$ with the Chern classes $(c_1, c_2, c_3)$, $c_1\leq 2$, if and only if the numeric data $(c_1, c_2, c_3;r)$ is one of the followings:
\begin{align*}
&(1,2,2;3\leq r\leq 4),\\
&(2,4,0;3)~,~ (2,4,2;3)~,~ (2,4,4;4),\\
&(2,5,5;4\leq r\leq 5)~,~ (2,6,8;4\leq r\leq 7)~,~(2,8,16;4\leq r\leq 13) .
\end{align*}
\end{theorem}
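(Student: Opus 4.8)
The plan is to deduce both implications from the rank-$3$ classification above by passing between ranks using general sections. For the ``only if'' direction, let $\Ee$ be globally generated and indecomposable of rank $r\ge3$ with $c_1\le2$. Then $c_1\ne0$, since a globally generated bundle with vanishing first Chern class is trivial, so $c_1\in\{1,2\}$. If $r\ge4$ then $\op{rk}\Ee>\dim Q=3$, so a general section of $\Ee$ is nowhere zero and produces $0\to\Oo_Q\to\Ee\to\Ee'\to0$ with $\Ee'$ globally generated of rank $r-1$ and $c_i(\Ee')=c_i(\Ee)$; iterating, $\Ee$ is an iterated extension of a rank-$3$ globally generated bundle $\Gg$ by copies of $\Oo_Q$ sharing its Chern classes. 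Because $H^1(\Oo_Q)=H^2(\Oo_Q)=0$, such extensions are controlled by $\Ext^1(-,\Oo_Q)$ and global generation is preserved at every step.

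First I would pin down the Chern triple. Applying the rank-$3$ classification to $\Gg$, together with the elementary list of globally generated split bundles $\bigoplus_i\Oo_Q(a_i)$, $a_i\ge0$, $\sum_ia_i\le2$, forces $(c_1,c_2,c_3)$ into an explicit finite set. I would then discard the triples carrying no indecomposable globally generated bundle of rank $\ge3$: the split ones and those forcing $\Sigma\oplus\Oo_Q$, $\Sigma\oplus\Oo_Q(1)$, or a pull-back of $\Nn_{\PP^3}(1)\oplus\Oo_{\PP^3}$. The decisive tool is a Chern-class obstruction to splitting off a line bundle: on $Q$ a rank-$3$ bundle $\Oo_Q(a)\oplus\Ff$ has $c_3=a\,c_2(\Ff)$ and $\bigoplus_i\Oo_Q(a_i)$ has $c_3=2\prod_ia_i$, and matching these shapes against the surviving Chern data --- using the rank-$2$ classification of \cite{BHM} to list the candidate $\Ff$'s --- shows that for the triples in the statement every globally generated bundle with those data and rank $3$ is indecomposable, while for all other triples it is a direct sum. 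Since $H^i(\Oo_Q)=0$ for $i=1,2$, an iterated $\Oo_Q$-extension of a decomposable rank-$3$ bundle stays decomposable, so the discarded triples yield nothing in any rank.

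Next I would establish the bounds on $r$. For each surviving triple fix the minimal indecomposable globally generated model $\Gg_0$ supplied by the rank-$3$ classification --- one of $\Aa_P$, $\Aa_P^{\vee}(1)$, or the rank-$3$ bundle associated with a smooth curve $C$ of the prescribed $(d,g)$ --- and note that every indecomposable globally generated $\Ee$ with that triple sits in $0\to\Oo_Q^{\oplus k}\to\Ee\to\Gg_0\to0$ with $k=r-\op{rk}\Gg_0$, classified by a $k$-tuple in $\Ext^1(\Gg_0,\Oo_Q)\simeq H^1(\Gg_0^\vee)$. A linearly dependent tuple splits off a copy of $\Oo_Q$, so indecomposability forces $k\le\dim H^1(\Gg_0^\vee)$, i.e.\ $r\le\op{rk}\Gg_0+\dim H^1(\Gg_0^\vee)$. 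The remaining work is to compute $\dim H^1(\Gg_0^\vee)$ in each case: for $\Aa_P$ and $\Aa_P^{\vee}(1)$ through the double cover $\pi\colon Q\to\PP^3$, using $\pi_*\Oo_Q=\Oo_{\PP^3}\oplus\Oo_{\PP^3}(-1)$, the isomorphisms $T\PP^3(-j)\simeq\Omega^2_{\PP^3}(4-j)$, and Bott's formulas; for the curve-based models by dualizing $0\to\Oo_Q^{\oplus2}\to\Gg_0\to\Ii_C(c_1)\to0$ to $0\to\Oo_Q(-c_1)\to\Gg_0^\vee\to\Oo_Q^{\oplus2}\to\omega_C(3-c_1)\to0$ and reading $H^1$ off from Riemann--Roch and cohomology vanishing on $C$. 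This yields the ranges in the statement, the lower endpoint being $\op{rk}\Gg_0$.

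For the ``if'' direction, at the base rank $\Gg_0$ is globally generated by the rank-$3$ classification and indecomposable by the Chern-class obstruction above (with \cite{BHM} accounting for the rank-$2$ possibilities); for larger $r$ one takes $\Ee$ defined by a linearly independent $k$-tuple in $\Ext^1(\Gg_0,\Oo_Q)$ with $k=r-\op{rk}\Gg_0\le\dim H^1(\Gg_0^\vee)$. Such an $\Ee$ is globally generated because $\Gg_0$ and $\Oo_Q$ are and $H^1(\Oo_Q)=0$, and indecomposable because $\Gg_0$ is simple (which one checks in each case) while the extension class lies in no $\Ext^1(\Gg_0,A')$ for a proper summand $A'$ of $\Oo_Q^{\oplus k}$. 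The main obstacles I expect are twofold: verifying that the split-type triples and those forcing a spinor summand genuinely admit no higher-rank indecomposable globally generated bundle, which requires the $c_3$-obstruction argument to be airtight and rests on \cite{BHM}; and computing $\dim\Ext^1(\Gg_0,\Oo_Q)$ precisely to pin the upper endpoints of the ranges, together with checking that a generic extension realizing the top rank is indecomposable rather than merely free of trivial summands.
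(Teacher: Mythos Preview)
Your overall architecture matches the paper's: reduce to the rank-$3$ classification, pass to higher rank via extensions by $\Oo_Q$, and control those extensions through $h^1(\Ff^\vee)$. However, there is a genuine gap in how you obtain the \emph{upper} endpoints of the rank ranges.

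Your argument is that indecomposability forces the extension classes to be linearly independent, hence $r\le 3+h^1(\Gg_0^\vee)$. But linear independence only guarantees that $\Ee$ has no \emph{trivial} factor, not that $\Ee$ is indecomposable. Concretely, for $(c_1,c_2,c_3)=(2,5,5)$ one has $h^1(\Gg_0^\vee)=3$, so your bound is $r\le 6$; for $(2,6,8)$ one has $h^1(\Gg_0^\vee)=5$, giving $r\le 8$. The statement, however, asserts $r\le 5$ and $r\le 7$ respectively. The missing step is that at the top of your range the unique bundle without trivial factor is nonetheless decomposable: every spanned bundle on $Q$ with $(c_1,c_2,c_3;r)=(2,5,5;6)$ is $\Sigma\oplus\Phi$, and every one with $(2,6,8;8)$ is $\Phi\oplus\Phi$. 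The paper proves these facts (its Propositions~\ref{b1} and~\ref{c1}) by invoking the Ancona--Ottaviani splitting criterion to detect a $\Phi$-summand, which in turn requires showing $h^1(\Ii_C\otimes\Sigma(1))=0$ for \emph{every} curve $C$ in the relevant Hilbert scheme; this last point needs the irreducibility of those Hilbert schemes (Lemmas~\ref{l1} and~\ref{l2}), established by a degeneration argument on Del Pezzo surfaces. None of this is visible from your $\Ext$-dimension count.

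A parallel issue afflicts your existence argument. You claim that a linearly independent tuple in $\Ext^1(\Gg_0,\Oo_Q)$ produces an indecomposable $\Ee$ because $\Gg_0$ is simple and the class lies in no $\Ext^1(\Gg_0,A')$ for a proper summand $A'\subset\Oo_Q^{\oplus k}$; but again this only rules out a trivial factor. The paper instead observes that any decomposition of such an $\Ee$ must have both pieces with $c_1=1$, hence each piece lies in the explicit list $\{\Oo_Q(1),\Sigma,\Aa,\Phi\}$ from Proposition~\ref{5.6}, and then constructs specific non-split extensions (e.g.\ $\Aa_P$ by $\Aa_O$, or $\Aa_P$ by $\Phi$) and checks by hand, using stability and simplicity of $\Aa$ and $\Phi$, that they are not isomorphic to any sum from that list. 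Your proposal names this as an expected obstacle but does not supply the mechanism to overcome it.
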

To show the existence of indecomposable vector bundles on $Q$ with $c_1=2$, we consider a family $\mathfrak{F}$ of indecomposable vector bundles with $c_1=1$ and construct extensions of elements of $\mathfrak{F}$ by themselves. Then we check the indecomposability case by case.

As an automatic consequence, every globally generated vector bundle of rank $r\geq 14$ on $Q$ with $c_1\leq 2$ is decomposable.

The work was initiated during the stay of the second author at the Politecnico di Torino and the second author would like to thank the institute, especially the third author for warm hospitality.

\section{Preliminaries}
 Let $Q$ be a smooth quadric hypersurface in $\PP^4$ and let $\Ee$ be a coherent sheaf of rank $r$ on $Q$. Then we have:
\begin{align*}
c_1(\Ee(k))&=c_1+kr\\
c_2(\Ee(k))&=c_2+2k(r-1)c_1+2k^2{r\choose 2}\\
c_3(\Ee(k))&=c_3+k(r-2)c_2+2k^2{r-1\choose 2}+2k^3{r\choose 3} \\
\chi(\Ee)~~~~~~&=(2c_1^3-3c_1c_2+3c_3)/6+3(c_1^2-c_2)/2+13c_1/6+r,
\end{align*}
where $(c_1, c_2, c_3)$ is the Chern classes of $\Ee$. In particular, when $\Ee$ is a vector bundle of rank 2 with $c_1=-1$, we have
$$\chi(\Ee)=1-c_2~~,~~ \chi(\Ee(1))=6-2c_2~~,~~ \chi(\Ee(-1))=0,$$
$$\chi(\mathcal{E}nd(\Ee))=7-6c_2.$$

\begin{proposition}\cite{sierra}\label{prop1}
Let $\Ee$ be a globally generated vector bundle of rank $r$ on $Q$ such that $H^0(\Ee(-c_1))\not= 0$, where $c_1$ is the first Chern class of $\Ee$. Then we have
$$\Ee\simeq \Oo_Q^{\oplus r-1}\oplus \Oo_Q(c_1).$$
\end{proposition}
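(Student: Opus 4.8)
The plan is to exploit the nonzero section of $\Ee(-c_1)$ to peel off a line bundle summand $\Oo_Q(c_1)$ and then argue that what remains is trivial. First I would take a nonzero $s\in H^0(\Ee(-c_1))$; twisting, this gives a nonzero map $\varphi\colon\Oo_Q(c_1)\to\Ee$. The key first step is to show this map is a subbundle inclusion, i.e. that the quotient $\Ee/\Image(\varphi)$ is locally free, equivalently that $\varphi$ does not vanish anywhere. Since $\Ee$ is globally generated, so is $\Ee(-c_1)\otimes\Oo_Q(c_1)$ in the relevant sense; more to the point, the cokernel of $\varphi$ is a globally generated sheaf, and I would compare first Chern classes: the section $s$ of $\Ee(-c_1)$ has a zero locus $Z$ whose class is $c_1(\Ee(-c_1)) = c_1 - rc_1 = -(r-1)c_1$ when $r\ge 2$ (and $Z=\emptyset$ automatically when $r=1$ after adjusting). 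A section of a globally generated bundle whose top Chern class is "negative" in this sense must be nowhere vanishing — more carefully, one shows $H^0(\Ee(-c_1))\ne 0$ forces the section to be nowhere zero because otherwise its zero scheme would be a nonempty effective cycle of negative-degree class, which is impossible. Hence $\varphi$ is a subbundle and we get a short exact sequence
$$0\to\Oo_Q(c_1)\to\Ee\to\Ff\to 0$$
with $\Ff$ a globally generated vector bundle of rank $r-1$ and $c_1(\Ff)=0$.

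Next I would show $\Ff\simeq\Oo_Q^{\oplus(r-1)}$. A globally generated vector bundle with $c_1=0$ on any projective variety is trivial: the generic global section is nowhere vanishing (again by the Chern class count, since its zero locus would have class $c_1(\Ff)=0$ but be of the expected codimension, forcing it to be empty once one checks the generic section misses the degeneracy locus), giving $0\to\Oo_Q\to\Ff\to\Ff'\to 0$ with $\Ff'$ globally generated of rank $r-2$ and $c_1(\Ff')=0$; induct on the rank. So $\Ff\simeq\Oo_Q^{\oplus(r-1)}$, and the sequence becomes
$$0\to\Oo_Q(c_1)\to\Ee\to\Oo_Q^{\oplus(r-1)}\to 0.$$

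Finally, this extension splits. The obstruction lies in $\op{Ext}^1(\Oo_Q^{\oplus(r-1)},\Oo_Q(c_1))=H^1(\Oo_Q(c_1))^{\oplus(r-1)}$, and on the quadric threefold $Q$ one has $H^1(\Oo_Q(t))=0$ for all $t\in\ZZ$ (the intermediate cohomology of line bundles on $Q$ vanishes, since $Q$ is arithmetically Cohen–Macaulay). Therefore the sequence splits and $\Ee\simeq\Oo_Q(c_1)\oplus\Oo_Q^{\oplus(r-1)}$, as claimed.

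The main obstacle is the first step: verifying that the section coming from $H^0(\Ee(-c_1))\ne 0$ is nowhere vanishing, so that one genuinely gets a subbundle rather than merely a subsheaf. The cleanest way is to note that if $s$ vanished on a nonempty locus, then near a zero of $s$ the cokernel would fail to be locally free, yet global generation of $\Ee$ forces the cokernel to be globally generated, and a torsion-free rank-$(r-1)$ quotient of a globally generated bundle with the wrong Chern class leads to a contradiction via a degree computation on a general curve section of $Q$; one reduces, via hyperplane sections $Q\cap\PP^2$, to the analogous well-known statement on curves (a globally generated bundle of degree equal to that forced here cannot have the section vanish). This reduction to curves, already standard in the references cited in the introduction, is the technical heart; the splitting and the triviality of $\Ff$ are then routine.
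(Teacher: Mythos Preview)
The paper does not supply a proof of this proposition: it is quoted verbatim from Sierra \cite{sierra} and used as a black box, so there is no argument in the paper to compare yours against.

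Your overall strategy---peel off an $\Oo_Q(c_1)$ summand, identify the quotient as trivial, then split---is the standard one and is correct in outline; steps 3--5 are fine. The gap is in your justification of the key first step, that the section $s\in H^0(\Ee(-c_1))$ is nowhere vanishing. The computation $c_1(\Ee(-c_1))=-(r-1)c_1$ does not bear on whether a \emph{given} section vanishes, and $\Ee(-c_1)$ is not globally generated, so the ``effective cycle of negative degree'' heuristic and the vague ``reduce to curves'' sketch are not proofs as they stand.

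A clean way to close this gap: since $\Ee$ is globally generated, so is $\bigwedge^{r-1}\Ee\cong\Ee^\vee\otimes\det\Ee=\Ee^\vee(c_1)$. Dualizing $s\colon\Oo_Q\to\Ee(-c_1)$ gives a nonzero map $s^\vee\colon\Ee^\vee(c_1)\to\Oo_Q$, whose image is a globally generated ideal sheaf $\Ii_Z\subset\Oo_Q$. A nonzero globally generated ideal sheaf on an integral projective variety has $H^0(\Ii_Z)\ne 0$, hence $\Ii_Z=\Oo_Q$ and $Z=\emptyset$. Thus $s^\vee$ is surjective, which says exactly that $s$ is nowhere vanishing and $\Oo_Q(c_1)\hookrightarrow\Ee$ is a subbundle. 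With this fix your remaining steps go through verbatim.
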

In particular, $\Ee \simeq \Oo_Q^{\oplus r}$ is the unique globally generated vector bundle of rank $r$ on $Q$ with $c_1=0$. Thus let us assume that $c_1\geq 1$.

Let $\Ee$ be a globally generated vector bundle of rank $r\geq 3$ on $Q$. If $c_3(\Ee)=0$, then by Lemmas 4.3.1 and 4.3.2 in \cite{OSS},  we have the sequence:
\begin{equation}\label{eq1}
0\rightarrow \Oo_Q^{\oplus(r-2)} \rightarrow \Ee \rightarrow \Ff \rightarrow 0,
\end{equation}
where $\Ff$ is a vector bundle of rank 2 on $Q$ with $c_i(\Ff)=c_i(\Ee)$ for all $i$. Since $\Ff$ is globally generated, we can use the result on the classification of globally generated vector bundles of rank 2 with $c_1\leq 3$ \cite{BHM}. In this classification, we have $h^1(\Ff^{\vee})=0$ and so the sequence (\ref{eq1}) splits, i.e. $\Ee\simeq \Oo_Q^{\oplus (r-2)}\oplus \Ff$.

Let $\Ee$ be a globally generated vector bundle of rank $3$ on $Q$ with the first Chern class $c_1=c_1(\Ee)$ and then it fits into the following exact sequence,
\begin{equation}\label{eqa2}
0\rightarrow \Oo_Q^{\oplus 2} \rightarrow \Ee \rightarrow \Ii_C(c_1) \to 0,
\end{equation}
where $C$ is a smooth curve of degree $c_2(\Ee)$ on $Q$ \cite{BC}\cite{Chang}. If $C$ is empty, then $\Ee$ is isomorphic to $\Oo_Q^{\oplus 2}\oplus \Oo_Q(c_1)$ and so let us assume that $C$ is not empty. It would mean that we assume that $H^0(\Ee(-c_1))=0$.

\begin{remark}\label{pa}
As in the proof of the theorem 4.1 \cite{Hartshorne1}, the third Chern class of $\Ee$ fitted into the sequence (\ref{eqa2}) is $2p_a(C)-2+d(3-c_1)$ where $p_a$, $d$ are the arithmetic genus and degree of $C$. In particular, for a normal rational curve of degree $4$ with $c_1=2$, the corresponding $\Ee$ has the Chern classes $(c_1, c_2, c_3)=(2,4,2)$.
\end{remark}

Let us first deal with the case of $c_1=1$.

\begin{definition}
Let $\phi_P : Q \to \PP^3$ be the linear projection with the center point $P\in \PP^4\setminus Q$. We define
$$\Aa_P := \phi_P^*  (T\PP^3(-1)),$$
the pull-back of $T\PP^3(-1)$, the tangent bundle of $\PP^3$ twisted by $-1$, along $\phi_P$. Without confusion, we simply denote $\Aa_P$ by $\Aa$.
\end{definition}

In the proof of Theorem 2.1 in \cite{OS}, $\Aa_P$ is uniquely determined by the choice of $P$.
\begin{remark}
$\Aa$ admits an exact sequence
\begin{equation}\label{ees1}
0\to \Oo_Q(-1) \to \Oo_Q^{\oplus 4} \to \Aa \to 0.
\end{equation}
From the long exact sequence of symmetric powers associated to the sequence (\ref{ees1}), twisted by $\Oo_Q(-1)$, we obtain
$$H^0(\wedge^q \Aa(-1))=0~~, ~~\text{for }q\in 1,2.$$
It implies the stability of $\Aa$ due to the Hoppe criterion.
\end{remark}

\begin{proposition}
Let $\Ee$ be a globally generated vector bundle of rank 3 on $Q$ with $c_1=1$. Then $\Ee$ is isomorphic to either $$\Oo_Q^{\oplus 2} \oplus \Oo_Q(1)~,~ \Aa_P ~~ ( P\in \PP^4\setminus Q )~,\text{  or  } ~\Oo_Q\oplus \Sigma.$$
\end{proposition}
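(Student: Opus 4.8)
The plan is to use the defining sequence \eqref{eqa2} for a globally generated rank-$3$ bundle $\Ee$ with $c_1=1$, namely
\[
0\to \Oo_Q^{\oplus 2}\to \Ee\to \Ii_C(1)\to 0,
\]
and to run through the very short list of possibilities for the curve $C\subset Q$. By Proposition \ref{prop1} we may assume $H^0(\Ee(-1))=0$, so $C$ is nonempty. Since $\Ii_C(1)$ is a quotient of the globally generated $\Ee$, it is globally generated, which forces $C$ to be cut out by hyperplane sections of $Q$; in particular $C$ is contained in a hyperplane and $h^0(\Ii_C(1))\geq 1$, with $\deg C = c_2(\Ee)$. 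The first step is therefore to bound $\deg C$: a globally generated twisted ideal sheaf $\Ii_C(1)$ on $Q$ with $C$ lying in the linear system of hyperplanes can only have $C$ a point ($d=1$) or a conic ($d=2$) — a line would give $h^0(\Ii_C(1)) = 3$ but the general member of that linear system would not be smooth through $C$ in the required way, and higher degree is impossible because $\Ii_C(1)$ being globally generated means $C$ is scheme-theoretically cut out by the sections of $\Oo_Q(1)$ vanishing on it, which after intersecting with two hyperplanes forces $\deg C\le 2$. So the two cases are $(d,g)=(1,0)$ and $(d,g)=(2,0)$.

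Next I would treat each case. When $d=1$, $C$ is a point $x\in Q$, so $c_2(\Ee)=1$ and (by Remark \ref{pa}) $c_3(\Ee)=2$; here $\Ii_x(1)$ is the image of the evaluation $H^0(\Oo_Q(1))\otimes\Oo_Q\to \Oo_Q(1)$ restricted appropriately, and one recognizes $\Ii_x(1)$ as a quotient of $\Oo_Q^{\oplus 4}$ by $\Oo_Q(-1)$ — this is exactly the structure of the spinor bundle twist. The spinor bundle $\Sigma$ on $Q$ (rank $2$, $c_1(\Sigma)=1$) fits into $0\to \Sigma\to \Oo_Q^{\oplus 4}\to \Sigma(1)\to 0$ and also has a presentation realizing $\Ii_x(1)$; more directly, I would compute $\Ext^1(\Ii_x(1),\Oo_Q^{\oplus 2})$ and identify the nonsplit part with $\Sigma$, concluding $\Ee\simeq \Oo_Q\oplus\Sigma$ — the extra trivial summand comes because $\Ext^1(\Ii_x(1),\Oo_Q)$ is one-dimensional, so only one copy of $\Oo_Q$ can be nontrivially extended. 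When $d=2$, $C$ is a conic, $c_2(\Ee)=2$, $c_3(\Ee)=2$, and I claim $\Ee\simeq\Aa_P$. One checks from the sequence \eqref{ees1} that $\Aa_P$ is globally generated with $c_1=1$ and that it sits in $0\to\Oo_Q^{\oplus 2}\to\Aa_P\to\Ii_C(1)\to 0$ for $C$ a conic (the conic being the locus where the projection $\phi_P$ ramifies / the preimage structure of a point under $T\PP^3(-1)$'s zero scheme); conversely, given the extension data for $\Ee$, the vanishing $H^0(\Aa_P(-1))=0$ and a dimension count of $\Ext^1(\Ii_C(1),\Oo_Q^{\oplus 2})$ show the extension is unique up to isomorphism, hence $\Ee\simeq\Aa_P$.

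The main obstacle I expect is the rigidity/uniqueness step: proving that in each case the extension class is essentially unique, so that the abstract $\Ee$ really is forced to be $\Oo_Q\oplus\Sigma$ or $\Aa_P$ rather than merely having the same Chern classes. This amounts to computing $\Ext^1(\Ii_C(1),\Oo_Q)=H^1(\Ii_C(1))^{\vee}$ (Serre duality on $Q$, using $\omega_Q=\Oo_Q(-3)$) and showing it is one-dimensional for $C$ a point and for $C$ a conic, and then checking that a generic (equivalently, any nonzero up to the $\GL$-action on $\Oo_Q^{\oplus 2}$) choice yields a bundle — i.e. that the extension is locally free, which follows because $\Ii_C(1)$ has homological dimension $1$ and the extension surjects onto it with locally free kernel, so local freeness is equivalent to the extension class generating $\Ext^1$ at every point, a condition one verifies from global generation of $\Ee$. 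The bound $\deg C\le 2$ is also a point to be careful about, but it follows cleanly from the observation that $\Ii_C(1)$ globally generated implies $C$ is the scheme-theoretic intersection of $Q$ with hyperplanes, so that a general pencil of such hyperplanes cuts $C$ in a zero-dimensional scheme of length $\le 2$ by Bézout on $Q$.
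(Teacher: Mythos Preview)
There is a genuine error in your case analysis: the scheme $C$ in the sequence $0\to\Oo_Q^{\oplus 2}\to\Ee\to\Ii_C(1)\to 0$ is a smooth \emph{curve} of degree $c_2(\Ee)$ (this is how the construction in \cite{BC}, \cite{Chang} works for a rank-$3$ bundle on a threefold), not a zero-dimensional subscheme. Thus when $d=1$ the curve $C$ is a \emph{line}, not a point. Your attempt to exclude the line case (``the general member of that linear system would not be smooth through $C$ in the required way'') is not a valid argument; the Arrondo-type obstruction of Lemma~\ref{arr} applies only for $c_1=2$, since for a line $l$ one has $\wedge^2 N_{l/Q}\otimes\Oo_l(-c_1)\cong\Oo_l(1-c_1)$, which \emph{is} globally generated when $c_1=1$. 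So the two cases you must treat are: $C$ a line ($d=1$) and $C$ a conic ($d=2$).

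Once the cases are set up correctly, the paper's approach is more direct than your proposed $\Ext$-computation. For $C$ a conic, $C$ is the complete intersection of two hyperplane sections, so $\Ii_C$ has the Koszul resolution $0\to\Oo_Q(-2)\to\Oo_Q(-1)^{\oplus 2}\to\Ii_C\to 0$; combining this with the defining sequence of $\Ee$ yields $0\to\Oo_Q(-1)\to\Oo_Q^{\oplus 4}\to\Ee\to 0$, which identifies $\Ee$ with $\Aa_P$ by \cite{OS}, Theorem~2.1. For $C=l$ a line, one uses that $l$ is the zero locus of a section of the spinor bundle $\Sigma$, giving $0\to\Sigma(-1)\to\Oo_Q^{\oplus 3}\to\Ii_l(1)\to 0$; a diagram chase then shows $\Ee\cong\Oo_Q\oplus\Sigma$. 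Your strategy of computing $\Ext^1(\Ii_C(1),\Oo_Q)$ and arguing uniqueness could in principle be made to work, but note that your Serre-duality formula $\Ext^1(\Ii_C(1),\Oo_Q)\cong H^1(\Ii_C(1))^\vee$ is not correct as stated (one must use $\omega_Q=\Oo_Q(-3)$ and account for the local $\mathcal{E}xt$ terms), so this route would require more care than you indicate.
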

\begin{proof}
 Since $\Ii_C(1)$ is globally generated from the sequence (\ref{eqa2}), $C$ is contained in a complete intersection $X$ of two hyperplane sections of $Q$. In particular, the degree of $C$ is at most 2 and so $C$ is either a conic or a line. If $C$ is a conic, i.e. $C=X$, then from a locally free resolution of $\Ii_C$, we obtain a locally free resolution of $\Ee$ :
$$0\to \Oo_Q(-1) \to \Oo_Q^{\oplus 4} \to \Ee \to 0.$$
Notice that $\Ee$ is isomorphic to $\Aa$, a pull-back of the tangent bundle of $\PP^3$ twisted by $-1$ (Theorem 2.1 in \cite{OS}). If $C$ is a line $l$, then it is a zero locus of a section of the spinor bundle $\Sigma$ which gives us the following exact sequence:
$$0\to \Sigma(-1) \to \Oo_Q^{\oplus 3} \to \Ii_l (1)\to 0.$$

From the following diagram we can deduce that $\Ee\simeq \Oo_Q\oplus \Sigma$.
$$\begin{array}{ccccccc}
& & &0& &0 & \\
& & &\downarrow & &\downarrow &\\
&          & &\Sigma(-1)   &  =  &\Sigma(-1)  & \\
&& &\downarrow & &\downarrow & \\
0 \to &\Oo_Q^{\oplus 2}&\rightarrow & \Oo_Q^{\oplus 5} &\rightarrow & \Oo_Q^{\oplus 3}& \to 0\\
&\|& &\downarrow & &\downarrow &\\
0 \to &\Oo_Q^{\oplus 2} & \rightarrow & \Ee &\rightarrow & \Ii_l(1) &  \to 0\\
&& &\downarrow & & \downarrow& \\

& &          &0& & 0& \\
\end{array}$$
\end{proof}

\section{Case of rank 3 and $c_1=2$}

Now assume that $\Ee$ is a globally generated vector bundle of rank 3 on $Q$ with $c_1(\Ee)=2$, then we have the sequence (\ref{eqa2}) with $c_1=2$.
In particular, $\Ii_C(2)$ is globally generated.

\begin{lemma}\cite{Arrondo} \label{arr}
No line is a connected component of $C$. In particular, the degree of $C$ is at least 2.
\end{lemma}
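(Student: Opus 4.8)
The plan is to exploit the hypothesis that $\Ii_C(2)$ is globally generated, which is a very restrictive condition. First I would observe that since $\Ii_C(2)$ is globally generated, the curve $C$ is cut out scheme-theoretically by quadric sections of $Q$; equivalently, $C$ is the scheme-theoretic intersection of the quadrics in $\PP^4$ containing it (together with $Q$). If $l$ were a connected component of $C$, write $C = l \sqcup C'$ as a disjoint union. Then any quadric hypersurface $S \subset \PP^4$ containing $C$ contains both $l$ and $C'$, and since $l$ and $C'$ are disjoint, $S$ must contain some point on the secant line joining a point of $l$ to a nearby point of $C'$ — more precisely, I would use the standard fact that if a quadric contains two disjoint linear/curvilinear pieces that are "linked" appropriately, the quadric is forced to contain extra points, contradicting that the quadrics cut out exactly $C$.

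More concretely, the cleanest route is: since $l \subset \PP^4$ is a line, the restriction map $H^0(\PP^4,\Oo(2)) \to H^0(l,\Oo_l(2))$ has kernel equal to quadrics through $l$, and any such quadric, restricted to a plane $\Pi$ spanned by $l$ and an extra point $p \in C'$, becomes a conic in $\Pi$ containing $l$, hence a union $l \cup l'$ of two lines (or a double line $2l$). So every quadric through $C$ meets the plane $\Pi = \langle l, p\rangle$ in a reducible conic containing $l$. The point $p$ lies on $C'$ but on none of the "residual" lines $l'$ for a generic choice — but actually the residual lines could vary. The key is that $l' $ always passes through the point $l \cap \Pi$... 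I would instead argue via the degree: the scheme cut out by all quadrics through $C$ on the plane $\Pi$ contains $l$ and the point $p$, but a base locus of conics through $l$ in a plane is $l$ together with at most one more point (the common point of the residual lines, if the residual lines have a common point) or just $l$. Forcing $p \notin l$ to be in this base locus for every $p \in C'$ near $l$ gives a contradiction once $C'$ is not contained in a single plane through $l$, which can be arranged or ruled out separately.

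Alternatively — and this is probably the approach actually used — I would invoke the Castelnuovo-type or $3$-regularity argument: since $\Ii_C(2)$ is globally generated, $\Ii_C$ is $3$-regular, so $C$ is a projectively normal-ish curve cut out by quadrics, and a line is never $3$-regular as a component of something cut out by quadrics unless it is linked in a way that creates intersection points. The cleanest citation-style argument (matching the cited \cite{Arrondo}): a globally generated $\Ii_C(2)$ means $C$ is an intersection of quadrics; Arrondo's classification of curves that are intersections of quadrics on $Q \subset \PP^4$ shows that no such curve has an isolated line component, because two disjoint components cannot both be contained in the base locus of a quadric pencil without the quadrics acquiring a common secant point. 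Then the "in particular" follows: $C$ is nonempty (we assumed $H^0(\Ee(-2)) = 0$), so $C$ contains a component of degree $\geq 2$ or several lines — but no line is a component, so every component has degree $\geq 2$, hence $\deg C \geq 2$.

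The main obstacle I anticipate is making rigorous the step that a quadric through two disjoint pieces $l$ and $C'$ is forced to contain a point not on $C$ — one must be careful that the "residual" lines to $l$ inside quadrics can move, so the base locus argument needs $C'$ to have enough points spanning a space of dimension $\geq 2$ over $l$, or one must separately dispose of the case where $C \setminus l$ lies in a plane containing $l$ (but then $C$ is planar and reducible, and one checks directly that $\Ii_C(2)$ is not globally generated, e.g. because a point of $l'$ off $C$ lies in every quadric through $C$). So the proof really splits into: (i) the generic/spanning case, handled by the base-locus-of-quadrics dimension count; and (ii) the degenerate planar case, handled by exhibiting an explicit point in the base locus. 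I would lean on \cite{Arrondo} to absorb case (i) and only remark on case (ii) if needed.
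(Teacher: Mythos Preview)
Your approach has a genuine gap and is fundamentally different from the paper's argument.

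The paper's proof is a one-line normal bundle computation coming from the Hartshorne--Serre correspondence (which is precisely what the citation to \cite{Arrondo} is pointing at). For the extension
\[
0 \to \Oo_Q^{\oplus 2} \to \Ee \to \Ii_C(2) \to 0
\]
to yield a \emph{locally free} $\Ee$, the two extension classes must give two sections of $\wedge^2 N_{C,Q}\otimes \Oo_C(-2)$ that generate this line bundle on $C$. Restricting to a line component $l$, adjunction gives $\det N_{l,Q}\cong \Oo_l(1)$, so $\wedge^2 N_{l,Q}\otimes \Oo_l(-2)\cong \Oo_l(-1)$, which has no sections at all. That is the whole proof.

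Your argument, by contrast, tries to derive a contradiction solely from the global generation of $\Ii_C(2)$, via base-locus-of-quadrics considerations. This cannot succeed: global generation of $\Ii_C(2)$ alone does \emph{not} rule out a line component. Indeed, if $C=l$ is a single line in $Q$, then $\Ii_l(1)$ (hence $\Ii_l(2)$) is already globally generated, since $l$ is cut out by hyperplanes. Your write-up implicitly assumes a nonempty residual component $C'$ to play the line against, so the case $C=l$ is simply not covered; and even when $C'\neq\emptyset$ you acknowledge the secant/base-locus step is not rigorous. The missing ingredient is exactly the one the paper uses: the hypothesis that $\Ee$ is a vector bundle, which via Hartshorne--Serre forces $\wedge^2 N_{C,Q}(-2)$ to be globally generated on every component of $C$. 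Note also that the reference \cite{Arrondo} is Arrondo's account of the Hartshorne--Serre correspondence, not a classification of curves cut out by quadrics, so the appeal you make to it does not match its content.
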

\begin{proof}
Let $l$ be a line component of $C$. If we tensor the sequence (\ref{eqa2}) with $\Oo_l$,  then $\bigwedge^2 N_{l, Q}\otimes \Oo_l(-2)\simeq \Oo_l(-1)$ is globally generated, which is absurd.
\end{proof}

As the first case let us assume that $H^0(\Ee(-1))\not= 0$ and then $C$ is contained in a complete intersection $X$ of two hypersurface sections of degree $1$ and $2$, so the degree of $C$ is at most $4$. By Lemma \ref{arr}, $C$ has no line as its connected component.
\begin{proposition}\label{prop2}
If $\Ee$ is a globally generated vector bundle of rank 3 on $Q$ with $c_1=2$ and $H^0(\Ee(-1))\not= 0$, then $\Ee$ is one of the followings:
\begin{enumerate}
\item $\Oo_Q^{\oplus 2} \oplus \Oo_Q(2)$
\item $\Oo_Q\oplus \Oo_Q(1)^{\oplus 2}$
\item $\Sigma\oplus \Oo_Q(1)$
\item $0\to \Oo_Q(-1) \to \Oo_Q^{\oplus 3} \oplus \Oo_Q(1) \to \Ee \to 0$.
\end{enumerate}
\end{proposition}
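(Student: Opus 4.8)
The plan is to produce a nonzero section of $\Ee(-1)$, study its cokernel, and reduce to the classification of rank-two bundles on $Q$; the one exceptional case will be resolved by liaison with a complete intersection. If $H^0(\Ee(-2))\neq 0$, Proposition \ref{prop1} gives (1) at once, so assume $H^0(\Ee(-2))=0$. Twisting (\ref{eqa2}) by $\Oo_Q(-1)$ and using $H^{i}(\Oo_Q(-1))=0$ for $i=0,1$ identifies $H^0(\Ee(-1))$ with $H^0(\Ii_C(1))$, which is nonzero by hypothesis; together with the global generation of $\Ii_C(2)$ this is exactly the statement, recalled above, that $C$ lies in a complete intersection of hypersurface sections of $Q$ of degrees $1$ and $2$. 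Hence $2\leq\deg C\leq 4$, $C$ has no line component by Lemma \ref{arr}, and, being smooth of degree at most $4$, it is irreducible with $p_a(C)\leq 1$, where $p_a(C)=1$ forces $\deg C=4$.

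Now fix a nonzero $s\colon\Oo_Q(1)\to\Ee$. It cannot degenerate along a divisor: otherwise $s$ factors through $\Oo_Q(1+k)$ with $k\geq 1$ and, multiplying by a section of $\Oo_Q(k-1)$, produces a nonzero element of $H^0(\Ee(-2))$. Thus $\Qq:=\coker(s)$ is a torsion-free rank-two sheaf with $c_1(\Qq)=1$, globally generated as a quotient of $\Ee$. One checks next that $\Qq$ is in fact reflexive — this amounts to excluding a one-dimensional degeneracy of $s$, which is done by combining global generation of $\Qq$ with a Chern-class computation along (\ref{eqa2}): a degeneracy curve would make the obstruction to $\Ee$ being locally free a section of a positive-degree line bundle on that curve, which is impossible. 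Granting this, $\Qq$ is a globally generated reflexive rank-two sheaf with $c_1=1$, $\deg c_2(\Qq)=\deg C-2$, and its invariants determine whether it is locally free in terms of $p_a(C)$.

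If $p_a(C)=0$, so $\deg C\in\{2,3\}$, then $\Qq$ is locally free; by the classification of globally generated rank-two bundles on $Q$ with $c_1\leq 3$ in \cite{BHM} it is $\Oo_Q\oplus\Oo_Q(1)$ or the spinor bundle $\Sigma$, the two distinguished by $\deg c_2(\Qq)=\deg C-2\in\{0,1\}$. In either case $\Ext^1(\Qq,\Oo_Q(1))=H^1(\Qq^\vee(1))=0$ — using $\Sigma^\vee\simeq\Sigma(-1)$ and that $\Sigma$ has no intermediate cohomology — so $0\to\Oo_Q(1)\to\Ee\to\Qq\to 0$ splits and $\Ee$ is $\Oo_Q\oplus\Oo_Q(1)^{\oplus 2}$ (case (2), $\deg C=2$) or $\Sigma\oplus\Oo_Q(1)$ (case (3), $\deg C=3$). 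If $p_a(C)=1$, then $\deg C=4$ and $C$ is a smooth elliptic quartic, the complete intersection in $Q$ of a hyperplane section and a quadric section; writing $C$ as the vanishing of a regular sequence of a linear and a quadratic form on $Q$, the Koszul complex twisted by $\Oo_Q(2)$ gives $0\to\Oo_Q(-1)\to\Oo_Q\oplus\Oo_Q(1)\to\Ii_C(2)\to 0$. Since $\Ext^1(\Oo_Q\oplus\Oo_Q(1),\Oo_Q^{\oplus 2})=0$, the surjection onto $\Ii_C(2)$ lifts to $\Ee$, and forming the mapping cone of this lift with (\ref{eqa2}) yields an exact sequence $0\to\Oo_Q(-1)\to\Oo_Q^{\oplus 3}\oplus\Oo_Q(1)\to\Ee\to 0$; the first map is automatically a subbundle inclusion because its cokernel $\Ee$ is locally free. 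This is (4).

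The step I expect to be the main obstacle is the reflexivity of $\Qq$ — equivalently, ruling out that $s$ degenerates along a curve — and, closely related, showing that in the genus-one case $C$ can only be the smooth elliptic quartic: a rational quartic lying in a $\PP^3$ would force $\Qq$ to be a globally generated locally free rank-two bundle with $\deg c_2(\Qq)=2$, which does not occur in the \cite{BHM} list. The Chern-class bookkeeping linking the invariants of $\Qq$ to $\deg C$ and $p_a(C)$ is what makes the three sub-cases close cleanly.
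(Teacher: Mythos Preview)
Your plan for $\deg C=4$ coincides with the paper's: Koszul for the complete intersection, lift via $\Ext^1(\Oo_Q\oplus\Oo_Q(1),\Oo_Q^{\oplus 2})=0$, done. For $\deg C=2,3$ the paper takes a different route from yours: it writes down the resolution of $\Ii_C$ directly (two linear forms for the conic; a spinor-type resolution $0\to\Sigma(-1)\to\Oo_Q^{\oplus 2}\oplus\Oo_Q(1)\to\Ii_C(2)\to 0$ for the twisted cubic, obtained by identifying an ACM rank-two kernel via its Chern classes), lifts to a resolution of $\Ee$, and reads off the splitting. Your idea of quotienting by $\Oo_Q(1)$ and invoking \cite{BHM} is more conceptual and would unify $d=2,3$ in one stroke, but it trades the paper's explicit kernels for a regularity statement about $\Qq$ that you have not established.

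That is the genuine gap. You need $\Qq$ locally free, not merely torsion free, to apply \cite{BHM}. Your sketch---``a degeneracy curve would make the obstruction to $\Ee$ being locally free a section of a positive-degree line bundle on that curve''---does not parse: $\Ee$ \emph{is} locally free by hypothesis, so there is no such obstruction, and nothing in (\ref{eqa2}) produces one. What you actually need to exclude is that the unique (up to scalar, when $d=3$) section $s\in H^0(\Ee(-1))$ vanishes along a curve. This is not automatic: already in case~(4) the induced quotient $\Ee/\Oo_Q(1)\cong\coker(\Oo_Q(-1)\to\Oo_Q^{\oplus 3})$ fails to be reflexive precisely when the three linear forms cut out a line of $Q$, so the phenomenon you must rule out does occur in the family. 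For $d=2$ you can sidestep the issue entirely by observing $c_3(\Ee)=0$ and using the splitting $\Ee\cong\Oo_Q\oplus\Ff$ recorded in \S2; for $d=3$ you need an honest argument (e.g.\ compare $c_2(\Qq^{\vee\vee})$ with the Bogomolov-type bounds for reflexive rank-two sheaves with $c_1=1$, or compute $c_3(\Qq)=2p_a(C)=0$ and argue separately that $Z(s)$ is zero-dimensional). A second, smaller issue: your exclusion of a rational quartic in a hyperplane by ``$c_2(\Qq)=2$ is not in the \cite{BHM} list'' presupposes $\Qq$ locally free and is therefore circular; the clean argument is that such a curve is of type $(1,3)$ on $Q\cap H$ and hence has trisecant lines in $Q$, so $\Ii_C(2)$ is not spanned.
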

\begin{proof}
By Lemma \ref{arr}, the degree of $C$ is at least $2$. If $\deg (C)=2$, then $C$ is a conic, a complete intersection of two hyperplane sections of $Q$. Since $h^1(\Oo _Q(t))=0$ for $t=-1,0$, the equations
of the generators of $\Ii_C$ lifts to sections of $H^0(\Ee (-t+2))$. So we have an exact sequence
\begin{equation}\label{eq++}
0\to \Oo_Q \stackrel{\psi}{\to} \Oo_Q^{\oplus 2} \oplus \Oo_Q(1)^{\oplus 2} \to \Ee \to 0,
\end{equation}
where $\psi = (\psi _1,\psi _2)$ with $\psi _1: \Oo _Q\to \Oo_Q^{\oplus 2} $ and $\psi _2: \Oo _Q\to \Oo_Q(1)^{\oplus 2} $. The map $\psi _1$ is given by two constants. Hence
if $\psi _1\ne 0$, then the sequence (\ref{eq++}) splits. If $\psi _1=0$, i.e. $\psi =(0,\psi _2)$ then the common zero-locus of two linear forms is non-empty and so $\Ee$ is not locally free, a contradiction. Thus we have $\Ee \cong  \Oo_Q \oplus \Oo_Q(1)^{\oplus 2}$.

If $\deg (C)=3$, then $C$ is a twisted cubic. So there is a a surjective map $\alpha:\Oo_Q(-2)^{\oplus 2}\oplus\Oo_Q(-1)\to \Ii_C$ and let $\Kk=\ker(\alpha)$ :
\begin{equation}\label{kernel}
0\to \Kk \to \Oo_Q(-2)^{\oplus 2} \oplus \Oo_Q(-1) \stackrel{\alpha}{\to} \Ii_C \to 0.
\end{equation}
The Chern classes of $\Ii_C(2)$ is $(c_1, c_2, c_3)=(2,\deg (C), 2p_a(C)-2+3)=(2,3,1)$ by Remark \ref{pa}. From the sequence (\ref{kernel}), the third Chern class of $\Kk(2)$ is 0. In particular, $\Kk$ is locally free. Moreover the map $\alpha(t)$, the twist of $\alpha$ by $\Oo_Q(t)$, is surjective on global section for any integer $t$, so $\Kk$ is ACM. Now since $c_1(\Kk(2))=-1$ and $c_2(\Kk(2))=1$, we get $\Kk(2)=\Sigma(-1)$. Then we have the following exact sequence
$$0\to\Sigma(-1)\to\Oo_Q^{\oplus 2}\oplus\Oo_Q(1)\to \Ii_C(2)\to 0.$$
Using the sequence (\ref{eqa2}), since $\Ext^1(\Oo_Q^{\oplus 2}\oplus\Oo_Q(1), \Oo_Q^{\oplus 2})=0$, we get the sequence:
$$0\to \Sigma(-1) \to \Oo_Q^{\oplus 4} \oplus \Oo_Q(1) \to \Ee \to 0.$$
Let us consider the dual sequence of it. Since the spinor bundle is globally generated by the $4$ sections, $\Ee^\vee$ is ACM
and so we can deduce that $\Ee \simeq \Sigma\oplus \Oo_Q(1)$.

If $\deg (C)=4$, then $C$ is equal to the complete intersection $X$, i.e. $C$ is a smooth curve of type $(2,2)$ in a smooth quadric surface with
$$0\to \Oo_Q(-1) \to \Oo_Q\oplus \Oo_Q(1) \stackrel{\beta}{\to} \Ii_C(2) \to 0.$$
Since $\Ext^1 (\Oo_Q\oplus \Oo_Q(1), \Oo_Q^{\oplus 2})=0$, we obtain the surjectivity of the map
$$\Hom (\Oo_Q \oplus \Oo_Q(1), \Ee) \to \Hom (\Oo_Q\oplus \Oo_Q(1), \Ii_C(2)).$$
Thus there exists a map $\beta' : \Oo_Q\oplus \Oo_Q(1) \to \Ee$ inducing $\beta$, with which we obtain the resolution (4) in the assertion with $(\alpha, \beta) : \Oo_Q^{\oplus 3}\oplus \Oo_Q(1) \to \Ee$ where $\alpha : \Oo_Q^{\oplus 2} \to \Ee$ is the map in the sequence (\ref{eqa2}).
\end{proof}

\begin{remark}
Conversely, if $C$ is a smooth elliptic curve of degree 4, it is ACM and hence $h^1(\Ee(t))=0$ for all $t\in \ZZ$. Since $\Ii_C(2)$ is globally generated, so is $\Ee$. In this case, we have $(c_2,c_3)=(4,4)$.
\end{remark}

Now assume that $H^0(\Ee(-1))=0$. Note that $C$ is contained in a complete intersection of two hypersurface sections of degree 2 and so $c_2=\deg (C)\leq 8$. Since $C$ is not contained in a hyperplane section with Lemma \ref{arr}, we have $4\leq c_2 \leq 8$.

\begin{proposition}
Let $\Ee$ be a globally generated vector bundle of rank 3 on $Q$ with $(c_1, c_2)=(2,4)$ and $h^0(\Ee(-1))=0$. Then $\Ee$ is isomorphic to one of the followings :
\begin{enumerate}
\item $\Aa_P^{\vee}(1)$ for some $P \in \PP^4 \setminus Q$,
\item a pull-back of $\Nn_{\PP^3}(1)\oplus \Oo_{\PP^3}$ or
\item a quotient of $\Sigma \oplus \Sigma$ by $\Oo_Q$.
\end{enumerate}
\end{proposition}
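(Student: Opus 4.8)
We are given a globally generated rank 3 bundle $\Ee$ on $Q$ with $(c_1, c_2) = (2, 4)$ and $h^0(\Ee(-1)) = 0$, so $\Ee$ sits in the sequence $(\ref{eqa2})$ with $C$ a smooth curve of degree $4$ not contained in any hyperplane section. The possible genera of $C$ must be determined: since $C$ is a smooth nondegenerate curve of degree $4$ in $Q \subset \PP^4$, it is either a rational normal quartic (genus $0$) or an elliptic quartic (genus $1$); higher genus is impossible by Castelnuovo-type bounds for degree $4$ curves in $\PP^3$ (as $C$ lies on at least one quadric, it is contained in $\PP^3$). So I would split the proof according to $g = p_a(C) \in \{0, 1\}$, which by Remark~\ref{pa} corresponds to $c_3 \in \{2, 4\}$ respectively.

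**The genus $1$ (elliptic quartic) case.** Here $C$ is the complete intersection of two quadric surfaces inside $\PP^3$, hence ACM in $Q$, so $h^1(\Ii_C(t)) = 0$ for all $t$; combined with $(\ref{eqa2})$ this gives $h^1(\Ee(t)) = 0$ for all $t$. The plan is to show $\Ee$ is a quotient of $\Sigma \oplus \Sigma$ by $\Oo_Q$: since $\Ee$ is globally generated with $h^0(\Ee) = \chi(\Ee)$ computable from the Chern class formula, and since $h^0(\Sigma) = 4$ with $\Sigma$ globally generated, I would produce a surjection from a sum of spinor bundles onto $\Ee$. More precisely, I would analyze $\Hom(\Sigma, \Ee)$: one shows there is a nonzero map $\Sigma \to \Ee$ whose degeneracy locus relates to sections of $\Ii_C$, and then a second independent one, yielding $\Sigma^{\oplus 2} \to \Ee$; a Chern class count ($2c_1(\Sigma) = 2 = c_1(\Ee)$, and the second and third Chern classes match up) forces the kernel to be $\Oo_Q$ once surjectivity is established. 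The surjectivity is checked by restricting to the (at most) locus where the map drops rank and ruling it out using global generation of $\Ee$ together with the instability/ACM bookkeeping.

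**The genus $0$ (rational normal quartic) case.** Here $C$ is a rational normal curve of degree $4$ and $c_3 = 2$. The two expected outputs, $\Aa_P^\vee(1)$ and the pull-back of $\Nn_{\PP^3}(1) \oplus \Oo_{\PP^3}$, both have a concrete resolution, so I would build a resolution of $\Ii_C$ first. A rational normal quartic in $\PP^3$ has a well-known Eagon--Northcott-type resolution, and on $Q$ one gets a presentation of $\Ii_C(2)$ by a sum of line bundles (with twists by $\Oo_Q$, $\Oo_Q(-1)$, $\Oo_Q(-2)$). Using the vanishing of the relevant $\Ext^1$ groups (exactly as in Proposition~\ref{prop2}), this lifts to a resolution of $\Ee$. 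Then I would distinguish the two subcases according to whether $C$ lies on a smooth quadric surface or a quadric cone in the $\PP^3$ it spans — equivalently, according to the splitting type of $\Oo_C$ in the pencil of quadrics — and match the resulting resolution of $\Ee$ with that of $\Aa_P^\vee(1)$ (whose resolution is dual to $(\ref{ees1})$ twisted by $\Oo_Q(1)$) in one case, and with the pull-back of $\Nn_{\PP^3}(1) \oplus \Oo_{\PP^3}$ in the other (recognizing $\Nn_{\PP^3}$ via its null-correlation resolution and checking the curve $C$ is the union of two conics as stated in Theorem 1.1).

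**Main obstacle.** The hardest part will be the genus $0$ case: separating $\Aa_P^\vee(1)$ from the $\Nn_{\PP^3}$-pull-back and proving these are the \emph{only} two possibilities. Both have the same Chern classes $(2,4,2)$, so numerics alone do not distinguish them, and one must use finer invariants — the cohomology table, the behavior of $H^0(\Ee(-1)) = 0$ versus $H^0(\Ee^\vee)$, or the geometry of the associated curve (reduced irreducible quartic versus degenerating to two conics). I expect to need a careful case analysis of the map $\Oo_Q^{\oplus 2} \to \Ee$ from $(\ref{eqa2})$ dualized, together with stability of $\Aa$ (from the Remark after its definition) to pin down when $\Ee^\vee$ is (a twist of) $\Aa$, and to show that the only non-stable/decomposable-after-twist alternative is the null-correlation pull-back. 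The genus $1$ case, by contrast, should be fairly mechanical once the ACM property is in hand.
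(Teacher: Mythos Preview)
Your case division is fundamentally wrong, and this derails the entire proof. The curve $C$ in sequence~(\ref{eqa2}) is smooth but need not be connected, and you have overlooked the disconnected case. In fact the three outcomes match up with curves as follows:
\begin{itemize}
\item $C$ a disjoint union of two conics ($p_a(C)=-1$, $c_3=0$) $\Rightarrow$ $\Ee\cong\Aa_P^\vee(1)$ or a pull-back of $\Nn_{\PP^3}(1)\oplus\Oo_{\PP^3}$;
\item $C$ a rational normal quartic ($g=0$, $c_3=2$) $\Rightarrow$ $\Ee$ is a quotient of $\Sigma\oplus\Sigma$ by $\Oo_Q$;
\item $C$ an elliptic quartic ($g=1$, $c_3=4$): this case does \emph{not} occur, because an elliptic quartic lies in a hyperplane, forcing $h^0(\Ii_C(1))>0$ and hence $h^0(\Ee(-1))>0$, contrary to hypothesis.
\end{itemize}
So you have swapped the assignments (you attach $\Sigma\oplus\Sigma$ to the elliptic case and $\Aa_P^\vee(1)/\Nn$ to the rational normal quartic), your claimed range $c_3\in\{2,4\}$ should be $c_3\in\{0,2\}$, and your ``main obstacle'' paragraph is attacking a non-problem: $\Aa_P^\vee(1)$ and the $\Nn$-pull-back do \emph{not} have the same $c_3$ as the $\Sigma\oplus\Sigma$ quotient.

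The paper's actual argument for the disconnected case is geometric: the two planes spanned by the conics meet at a point $P\in\PP^4\setminus Q$, and projection from $P$ is a double cover $Q\to\PP^3$ sending the conics to two skew lines; the extension group $\Ext^1(\Ii_C(2),\Oo_Q^{\oplus 2})$ is then identified with the corresponding group on $\PP^3$, where the only locally free extensions are $\Omega_{\PP^3}(2)$ and $\Nn_{\PP^3}(1)\oplus\Oo_{\PP^3}$. For the rational normal quartic, the paper builds the unique non-trivial extension $0\to\Oo_Q\to\Hh\to\Ee\to 0$ (unique because $h^1(\Ee^\vee)=1$), shows $\Hh$ is ACM of rank $4$ with $(c_1,c_2,c_3)=(2,4,2)$ via Castelnuovo--Mumford regularity of $\Hh^\vee(1)$, and concludes $\Hh\cong\Sigma\oplus\Sigma$. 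Your proposed direct construction of maps $\Sigma\to\Ee$ is not obviously wrong in spirit, but it is aimed at the wrong curve.
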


\begin{proof}
 Let us assume that $C$ is not connected. Then $C$ is a disjoint union of two conics $C_i$. Let $P_i$ be the projective plane containing $C_i$, then the intersection point $P:=P_1\cap P_2$ is in $\PP^4\setminus Q$. The projection from the point $P$ defines a double covering $l_P : Q \to \PP^3$ sending $C_i$ to a line $L_i$ in $\PP^3$. Since $\Ii_{L_1\cup L_2}(2)$ is globally generated, so is $\Ii_C(2)$. The fact that $H^1(\Oo_Q)=0$ implies that the vector bundle $\Ee$ fitted into the sequence (\ref{eqa2}) is always globally generated. On $\PP^3$, $\Ext^1 (\Ii_{L_1\cup L_2}(2), \Oo_{\PP^3}^{\oplus 2})$ is isomorphic to $H^0(\Oo_{L_1\cup L_2})^{\oplus 2}$. By a pull-back to $Q$, it maps isomorphically to $\Ext^1 (\Ii_C(2), \Oo_Q^{\oplus 2})\simeq H^0(\Oo_C)^{\oplus 2}$. Note that the only vector bundles on $\PP^3$ that is given by the non-trivial extension of $\Ext^1 (\Ii_{L_1\cup L_2}(2), \Oo_{\PP^3}^{\oplus 2})$ are $\Omega_{\PP^3}(2)$ and $\Nn_{\PP^3}(1)\oplus \Oo_{\PP^3}$ \cite{SU}. Thus $\Ee$ is a pull-back of them, i.e. $\Ee\simeq \Aa^{\vee}(1)$ or a pull-back of $\Nn_{\PP^3}(1)\oplus \Oo_{\PP^3}$. In this case, we have $(c_2,c_3)=(4,0)$.

Now assume that $C$ is connected; hence either $C$ is a smooth elliptic curve linearly normal
in a hyperplane of $\mathbb {P}^4$ or $C$ is a rational normal curve. Since $H^0(\Ee(-1))=0$, so $C$ is a normal rational curve of degree 4. Since $h^1(\Ee^{\vee})=h^2(\Ii_C(-1))-2=1$, so there exists a unique non-trivial extension $\Hh$ of $\Ee$ by $\Oo_Q$,
\begin{equation}\label{p1}
0\to \Oo_Q \to \Hh \to \Ee \to 0.
\end{equation}
 By the Riemann-Roch Theorem, we obtain that for any $a\geq 0$,
$$  h^2(\Ii_C(a))=h^1(\Oo_C(a))=-4a-1=0.$$
From the dual of the sequence (\ref{p1}), we obtain that for any $t<0$,
$$h^1(\Hh^\vee(t))=h^1(\Ee^\vee(t))=h^2(\Ee(-t-3))=h^2(\Ii_C(-t-1))=0$$
and $H^1(\Hh^\vee)=0$. Since $C$ is projectively normal, we have $H^1_*(\Ii_C)=0$ and so $H^1_*(\Ee)=H^2_*(\Ee^\vee)=0$. It implies that $H^2_*(\Hh^\vee)=0$.
  Let us notice that $H^3(\Hh^\vee(-2))\cong H^3(\Ee^\vee(-2))\cong H^0(\Ee(-1))=0$.
  Thus we have $H^i(\Hh^\vee(1)\otimes\Oo_Q(-i))=0$ for $i>0$ and by the `Castelnuovo-Mumford criterion', $\Hh^\vee(1)$ is regular. Then $\Hh(t)$ is also regular for any $t\geq 1$ and in particular  $H^1_*(\Hh^\vee)=H^2_*(\Hh^\vee)=0$. Since $\Hh$ is ACM with $(c_1,c_2,c_3)=(2,4,2)$, so we have $\Hh\cong\Sigma\oplus\Sigma$.
 In this case, we have $(c_2,c_3)=(4,2)$ for $\Ee$.
 \end{proof}
Here null-correlation bundles $\Nn_{\PP^3}$ are cokernels of maps $T_{\PP^3}(-1) \to \Oo_{\PP^3}(1)$ and so they form a family of projectively equivalent but not isomorphic vector bundles. A pull-back of $\Nn_{\PP^3}(1) \oplus \Oo_{\PP^3}$ depends on the choice of the center $P\in \PP^4\setminus Q$ of the projection $\PP^4 \dashrightarrow \PP^3$.

\begin{remark}
Conversely to the last case of the previous proposition, we obtain the followings :

\begin{enumerate}
\item Any normal rational curve $C$ of degree 4 is ACM. Thus we have $h^1(\Ee(t))=0$ for all $t\in \ZZ$ and $\Ii_C(2)$ is globally generated. Since the line bundle $\omega_C^{\vee}(1)$ is globally generated, so is $\Ee$.
\item Conversely, let $s, s'$ be two sections of $\Sigma$ whose zeros are two disjoint lines $l,l'$ respectively. It gives us an exact sequence:
$$0\to \Oo_Q \stackrel{(s,s')}{\to} \Sigma^{\oplus 2} \to \Ee \to 0,$$
where $\Ee$ is a vector bundle of rank 3 on $Q$ with $(c_1,c_2)=(2,4)$ since the section $(s,s')$ does not vanish. Note that $h^0(\Ee)=7$ and $h^0(\Ee(-1))=0$. So the corresponding curve $C$ to $\Ee$ is a normal rational curve of degree 4.
\end{enumerate}
\end{remark}
From the proof of the previous Proposition, we can obtain the following statement :
\begin{corollary}\label{cor}
$\Sigma \oplus \Sigma$ is the unique vector bundle of rank $4$ on $Q$ with $(c_1,c_2,c_3)=(2,4,2)$ and with no trivial factor.
\end{corollary}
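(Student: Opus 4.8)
The plan is to show that any rank-$4$ vector bundle $\Gg$ on $Q$ with $(c_1,c_2,c_3)=(2,4,2)$ and no trivial summand is forced, via the construction in the preceding proof, to coincide with $\Sigma\oplus\Sigma$. First I would argue that such a $\Gg$ must be globally generated, or at least admit enough sections to bootstrap into the framework of the proposition. Indeed, since the previous proposition and its ``conversely'' remark show that the curve $C$ attached to the rank-$3$ globally generated bundle with $(c_1,c_2)=(2,4)$ and $h^0(\Ee(-1))=0$ is a rational normal quartic, which is ACM and projectively normal, all the vanishing statements ($H^1_*(\Ii_C)=0$, hence $H^1_*(\Ee)=H^2_*(\Ee^\vee)=0$, and $H^2(\Ii_C(a))=0$ for $a\geq 0$) are available. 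The key structural fact established in the proof of the proposition is that the \emph{unique} non-trivial extension
$$0\to \Oo_Q \to \Hh \to \Ee \to 0$$
is ACM with Chern classes $(2,4,2)$, and that an ACM bundle on $Q$ with these invariants and no trivial factor must be $\Sigma\oplus\Sigma$; this last identification uses the Castelnuovo--Mumford regularity argument together with the classification of ACM bundles on the quadric threefold (the spinor bundle being the only non-split ACM bundle up to twist).

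The main step, then, is to run this identification in reverse. Given $\Gg$ with $(c_1,c_2,c_3)=(2,4,2)$ and no trivial summand, I would first check it is ACM: the numerics $\chi(\Gg(t))$ are determined by the Chern classes via the Riemann--Roch formula recalled in the Preliminaries, and combined with Serre duality $H^i(\Gg(t))^\vee\cong H^{3-i}(\Gg^\vee(-t-3))$ and the stability/semistability forced by the small Chern classes, one pins down $H^1_*(\Gg)=H^2_*(\Gg)=0$. Once $\Gg$ is ACM of rank $4$ with $(c_1,c_2,c_3)=(2,4,2)$ and no trivial factor, the classification of ACM bundles on $Q$ applies directly: each indecomposable ACM summand is, up to twist, $\Oo_Q$ or $\Sigma$, and matching Chern classes (each $\Sigma$ contributes $(c_1,c_2)=(1,1)$, with $\Sigma(1)$, $\Oo_Q(\pm 1)$ etc.\ ruled out by positivity of $c_1$, rank count, and the requirement $c_1=2$, $c_2=4$, $c_3=2$) leaves only $\Sigma\oplus\Sigma$.

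I expect the main obstacle to be the first reduction — showing that a bundle with the stated Chern data is automatically ACM — since a priori one is only given the numerical invariants, not global generation or any cohomological vanishing. The cleanest route is to observe that $\Gg$, having $c_1=2$ and no trivial factor, is globally generated or close to it by a direct dimension count on $H^0(\Gg)$ (using $\chi(\Gg)=7$ from Riemann--Roch and vanishing of higher cohomology in degree $0$), so that $\Gg$ or a sub-bundle realizes the extension situation of the proposition; alternatively, if one is content with the statement as used in the paper, it suffices to invoke that any such $\Gg$ arises as the $\Hh$ in sequence (\ref{p1}) for the (unique) rational normal quartic, and the uniqueness there — already proven — gives the corollary immediately. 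The remaining bookkeeping, matching Chern classes among twisted spinor bundles and structure sheaves, is routine.
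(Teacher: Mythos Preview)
Your ``alternative'' at the end is exactly what the paper does: it simply says the corollary follows from the proof of the preceding proposition, i.e., any rank-$4$ bundle $\Gg$ with no trivial factor arising in this context sits in a non-split extension $0\to\Oo_Q\to\Gg\to\Ff\to 0$ with $\Ff$ the rank-$3$ spanned bundle attached to a rational normal quartic, and since $h^1(\Ff^\vee)=1$ this $\Gg$ is the unique $\Hh$ already shown to be ACM and hence $\Sigma\oplus\Sigma$. The paper does not attempt the stronger literal reading you try first.

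Your main approach --- deducing ACM-ness directly from the Chern data $(2,4,2)$ and the absence of a trivial factor --- has a genuine gap, and you are right to flag it. Chern classes alone do not force vanishing of intermediate cohomology, nor do they force global generation or semistability; there is no mechanism here to rule out a non-ACM bundle with the same numerics. Your suggested bootstrap via $\chi(\Gg)$ does not close this: even granting $h^0(\Gg)>0$, that yields neither spannedness nor the Bertini-type sequence (\ref{eqa7}) needed to produce the quotient $\Ff$. (Incidentally, your value $\chi(\Gg)=7$ is off; the Riemann--Roch formula in the Preliminaries gives $\chi(\Gg)=8$, matching $h^0(\Sigma\oplus\Sigma)=8$.)

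In short: read the corollary as a statement about globally generated bundles with no trivial factor --- the ambient hypothesis throughout the paper --- and then your alternative argument, which is the paper's, goes through immediately. The attempt to prove the unrestricted statement would require substantially more input and is not what the paper claims to do.
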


Now let us deal with the case $c_2\geq 5$.
\begin{proposition}\cite{f}
Let $t(d,g)$ be the number of trisecant lines of a smooth and connected curve $C\subset \PP^4$ of degree $d$ and genus $g$ in $\PP^4$. Then we have
$$t(d,g)=\frac{(d-2)(d-3)(d-4)}6 -g(d-4).$$
\end{proposition}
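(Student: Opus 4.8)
\emph{Strategy.} I would realize $t(d,g)$ as a Thom--Porteous degeneracy number on the third symmetric product $C^{(3)}$, via the \emph{secant bundle}. Throughout, assume $C\subset\PP^4$ is nondegenerate and has only finitely many trisecant lines, none of them meeting $C$ in more than three points; these conditions hold for a general such $C$ and, when they fail, the asserted value of $t(d,g)$ is the expected (virtual) count. Let $C^{(3)}$ be the third symmetric product, a smooth projective threefold, let $\Dd\subset C\times C^{(3)}$ be the universal divisor of degree $3$, and let $p,q$ denote the two projections from $C\times C^{(3)}$. With $L:=\Oo_C(1)$, a line bundle of degree $d$, define the secant bundle
$$\Ss:=q_*\bigl(p^*L\otimes\Oo_{\Dd}\bigr),$$
a locally free sheaf of rank $3$ on $C^{(3)}$ whose fibre over $D=x+y+z$ is $H^0(D,L|_D)\cong\CC^3$ (cohomology and base change apply since $H^1(D,L|_D)=0$ for every length-$3$ divisor $D$).

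\emph{The degeneracy locus.} Next I would identify the trisecant locus. Restriction of linear forms gives a morphism $\phi\colon W\otimes\Oo_{C^{(3)}}\to\Ss$, where $W:=H^0(\PP^4,\Oo_{\PP^4}(1))$, of dimension $5$, is regarded inside $H^0(C,L)$ (injectively, by nondegeneracy). From the exact sequence $0\to\Ii_{D/\PP^4}(1)\to\Oo_{\PP^4}(1)\to\Oo_D(1)\to 0$ one gets $\op{rk}\phi_D=3-h^1(\Ii_{D/\PP^4}(1))$ and $\dim\langle D\rangle=2-h^1(\Ii_{D/\PP^4}(1))$, so $\op{rk}\phi_D\le 2$ exactly when $\dim\langle D\rangle\le 1$, i.e.\ when $x,y,z$ lie on a line. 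Hence the degeneracy locus $D_2(\phi)=\{\,\op{rk}\phi\le 2\,\}$ is precisely the set of divisors supported on a trisecant line of $C$, and under the genericity hypotheses the assignment $\ell\mapsto\ell\cap C$ is a bijection between the trisecant lines and the (finitely many, reduced) points of $D_2(\phi)$. Its expected codimension is $(5-2)(3-2)=3=\dim C^{(3)}$, so by Thom--Porteous
$$t(d,g)=\#D_2(\phi)=\int_{C^{(3)}}\bigl(c_1(\Ss)^3-2\,c_1(\Ss)\,c_2(\Ss)+c_3(\Ss)\bigr),$$
the integrand being the $3\times 3$ determinant $\det\bigl(c_{1+j-i}(\Ss)\bigr)_{1\le i,j\le 3}$ attached to $\Ss-W\otimes\Oo_{C^{(3)}}$, whose Chern classes are those of $\Ss$ since the second summand is trivial.

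\emph{The Chern class computation.} It remains to compute $c_i(\Ss)$ and integrate. From $0\to\Oo(-\Dd)\to\Oo\to\Oo_{\Dd}\to 0$ one has $\Ss=q_!(p^*L)-q_!\bigl(p^*L(-\Dd)\bigr)$ in $K$-theory, where $q_!(p^*L)=\chi(L)\cdot[\Oo_{C^{(3)}}]$; Grothendieck--Riemann--Roch then expresses $\op{ch}(\Ss)$ through $\op{ch}\bigl(p^*L(-\Dd)\bigr)\cdot\op{td}(p^*T_C)$ and the known Künneth decomposition of the class of the universal divisor $\Dd$. Rewriting the result in the standard generators of $H^*(C^{(3)})$ --- the point class $x$ and the pullback $\theta$ of the theta divisor under $C^{(3)}\to\Pic^3(C)$, with intersection numbers $\int_{C^{(3)}}x^{3-i}\theta^{i}=g!/(g-i)!$ for $0\le i\le 3$ --- and substituting into the determinant above should yield $\binom{d-2}{3}-g(d-4)$ after simplification, which is the claimed value of $t(d,g)$.

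\emph{Main obstacle.} I expect the real work to be the last paragraph: carrying out the Grothendieck--Riemann--Roch computation of $c_1(\Ss),c_2(\Ss),c_3(\Ss)$ in terms of $x$ and $\theta$ (equivalently, invoking the established formulae for the Chern classes of secant bundles on symmetric products of curves) and then performing the ensuing bookkeeping in $H^*(C^{(3)})$ without sign or combinatorial errors. A secondary point is to confirm the genericity hypotheses --- finiteness of the trisecant family and reducedness of $D_2(\phi)$ --- so that the enumerative count is literally the displayed integral; for the applications in this paper it suffices to have the formula as the expected number.
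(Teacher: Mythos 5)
The paper does not actually prove this proposition --- it is quoted from the literature (the classical Berzolari trisecant formula, attributed here to \cite{f}) with no argument given --- so there is no internal proof to measure you against. Your strategy is the standard modern one: realize the trisecant divisors as the degeneracy locus $D_2(\phi)$ of the evaluation map $W\otimes\Oo_{C^{(3)}}\to\Ss$ into the rank-$3$ secant bundle, check that the expected codimension $(5-2)(3-2)=3$ equals $\dim C^{(3)}$, and apply Thom--Porteous. Your identification of $D_2(\phi)$ with divisors spanning at most a line is correct, and so is the Porteous class $\det\bigl(c_{1+j-i}(\Ss)\bigr)_{1\le i,j\le 3}=c_1^3-2c_1c_2+c_3$ (one can sanity-check the convention on the case of $e$ sections of a line bundle, which must give $c_1^e$). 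The intersection numbers $\int_{C^{(3)}}x^{3-i}\theta^i=g!/(g-i)!$ are also the right input.

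The gap is that the proof stops exactly where the formula is supposed to come from. You never compute $c_1(\Ss),c_2(\Ss),c_3(\Ss)$ in terms of $x$ and $\theta$; the entire content of the identity $t(d,g)=\binom{d-2}{3}-g(d-4)$ is contained in that Grothendieck--Riemann--Roch (or Schwarzenberger/Mattuck-type) computation and the subsequent evaluation of $c_1^3-2c_1c_2+c_3$ against the numbers $1,\,g,\,g(g-1),\,g(g-1)(g-2)$, and ``should yield \dots after simplification'' is an assertion, not an argument. As written, the proposal is a correct reduction of the proposition to a known Chern-class formula for secant bundles (as in Le Barz or ACGH, Ch.~VIII), not a proof of it; to complete it you must either carry out that computation explicitly or cite the Chern-class formula precisely and perform the final expansion. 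A secondary point, which matters for how the paper actually uses $t(d,g)$ in the subsequent Remark: when $D_2(\phi)$ is finite, the local Porteous multiplicities are positive, so $t(d,g)\neq 0$ genuinely forces the existence of a trisecant, and $t(d,g)<0$ forces the trisecant family to be infinite; your virtual-count caveat should be upgraded to this positivity statement for the proposition to do the work the paper asks of it.
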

\begin{remark}
When $t(d,g) \ne 0$, there is at least one
line $L\subset \mathbb {P}^4$ such that $\deg (L\cap C)\ge 3$ . If $t(d,g)<0$, then there
are infinitely many lines as above. Hence if $t(d,g) \ne 0$, then $\mathcal {I}_C(2)$ is not spanned. Note that $t(5,0) = 1$, $t(6,0) = 4$, $t(6,1) = 2$ and $t(7,3) = 1$.
\end{remark}

\begin{lemma}\label{conic}
If $\Ee$ is globally generated with $c_2\geq 5$, then the associated curve $C$ has no smooth conic as its connected component.
\end{lemma}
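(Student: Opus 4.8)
The plan is to derive a contradiction by exhibiting a line $L\subset\PP^4$ with $\deg(L\cap C)\ge 3$: by the observation recorded in the Remark just above, such a line prevents $\Ii_C(2)$ from being globally generated, whereas $\Ii_C(2)$ \emph{is} globally generated, being a quotient of the globally generated bundle $\Ee$ through the sequence (\ref{eqa2}).

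So I would argue by contradiction: suppose a connected component $D$ of $C$ is a smooth conic, and write $C=D\sqcup C'$, where $C'$ is the union of the remaining components. Then $C'$ is smooth, disjoint from $D$, has degree $d':=c_2-2\ge 3$ (this is exactly where the hypothesis $c_2\ge 5$ is used), and by Lemma \ref{arr} it contains no line among its components. The first thing I would establish is that $C'$ is not contained in a plane: a $2$-plane $\Pi'\subset\PP^4$ is never contained in the smooth quadric threefold $Q$, so $\Pi'\cap Q$ is a conic, and every subscheme of it has degree at most $2<d'$; hence $C'$ spans at least a $\PP^3$ and is a non-planar reduced curve.

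The main step is then a secant variety count. Since $C'$ is a non-planar reduced curve, its secant variety $\Sec(C')\subseteq\PP^4$ has dimension $3$. Because $\dim\Sec(C')+\dim D=3+1\ge 4=\dim\PP^4$, the intersection $\Sec(C')\cap D$ is non-empty; pick a point $p$ in it. As $p\in D$ and $D\cap C'=\emptyset$, we have $p\notin C'$, while $p$ lies (by definition of the secant variety, $C'$ being smooth) on a line $L$ with $\deg(L\cap C')\ge 2$. Since $D$ and $C'$ are disjoint, $L\cap C=(L\cap D)\sqcup(L\cap C')$, and adding the point $p$ gives $\deg(L\cap C)\ge 2+1=3$; note $L\not\subseteq C$ because $C$ has no line component. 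Thus $L$ is the desired trisecant, and the contradiction is complete.

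The only step I expect to need genuine care is the assertion $\dim\Sec(C')=3$ when $C'$ is reducible: for an irreducible non-planar curve it is classical, but for reducible $C'$ one must either quote the absence of secant defect for curves, or reduce to a non-planar component or to the join of two components lying in distinct planes, which here are explicit curves of degree at most $6$ and can be checked by hand. Everything else — the degree bookkeeping and the passage from a trisecant to the failure of global generation — is routine given the results already set up.
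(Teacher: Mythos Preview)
Your argument is correct, but it follows a genuinely different route from the paper's. You manufacture an explicit trisecant line via a secant-variety dimension count: since $C'$ is non-planar, $\Sec(C')$ is a threefold in $\PP^4$, hence meets the conic $D$, and any secant or tangent line of $C'$ through a point of $D$ becomes a trisecant of $C=D\sqcup C'$; then you invoke the principle (recorded in the Remark preceding the lemma) that a trisecant forces $\Ii_C(2)$ to fail global generation. The paper instead restricts to a general hyperplane section $Q'\cong\PP^1\times\PP^1$ of $Q$ containing the conic $E_1$: on $Q'$ the conic has type $(1,1)$, so $\Ii_{C\cap Q',Q'}(2,2)\cong\Ii_{T\cap Q',Q'}(1,1)$, and since $T\cap Q'$ consists of at least three points, this sheaf is visibly not globally generated (three points on a smooth quadric surface lie on at most a pencil of $(1,1)$-curves, with nontrivial base locus).

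The paper's approach is shorter and entirely self-contained, using only elementary facts about $\PP^1\times\PP^1$. Your approach makes the obstruction geometrically explicit as a trisecant line and ties in naturally with the trisecant-number computations used for $c_2\in\{5,6,7\}$ later in the section, at the cost of the extra care you correctly flag: when $C'$ is a disjoint union of conics one must check that the join of two non-coplanar conics is a threefold, which is easy but does need saying. One small point worth making explicit in your write-up is that every point of $\Sec(C')$ genuinely lies on a line $L$ with $\deg(L\cap C')\ge 2$ (not merely on a limit of such lines); this holds because the secant variety of a smooth curve is the image of the proper morphism from the $\PP^1$-bundle over $(C')^{[2]}$, so closure adds nothing.
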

\begin{proof}
Let $C=E_1\sqcup T$ with $\deg (T)\ge 3$, $T$ not necessarily connected. Let $Q'$ be a general hyperplane section containing $E_1$. Then $Q'$ is a smooth quadric surface, $E_1$ has type $(1,1)$ and $T\cap Q'$ is a zero-dimensional scheme; $\mathcal {I}_C(2)$
is not globally generated because $\mathcal {I}_{E_1\cup (Q'\cap T),Q'}(2,2) \cong \mathcal {I}_{T\cap Q',Q'}(1,1)$
is not globally generated.
\end{proof}

\begin{lemma}
Let $\Ee$ be a globally generated vector bundle of rank 3 on $Q$ with $c_1=2$.
We have $c_2=5$ or $6$ if and only if the associated curve $C$ is smooth and irreducible with $\deg(C)=c_2$ and $p_a(C)=c_2-4$.
\end{lemma}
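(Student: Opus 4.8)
The plan is to prove the two implications separately, using throughout the following facts. Global generation of $\Ee$ forces $\Ii_{C/Q}(2)$ to be globally generated, since it is a quotient of $\Ee$ by (\ref{eqa2}); under the hypotheses of this section ($c_1=2$, $c_2\ge 5$) we have $H^0(\Ee(-1))=0$, $5\le c_2\le 8$, and $C$ is nondegenerate in $\PP^4$; and if a line $\ell\not\subset C$ meets $C$ in a subscheme of length $\ge 3$, then every section of $\Oo_Q(2)$ vanishing on $C$ vanishes on $\ell$, so $\Ii_{C/Q}(2)$ is not globally generated at any point of $\ell\setminus C$. In particular, by the formula for the number $t(d,g)$ of trisecant lines of a smooth connected curve in $\PP^4$ and the remark following it, a smooth connected $C$ with $t(\deg C,p_a(C))\ne 0$ cannot occur.

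For the ``if'' part, assume $C$ is smooth and irreducible with $p_a(C)=c_2-4$; since $5\le c_2\le 8$ it suffices to exclude $c_2=7$ and $c_2=8$. If $c_2=7$ then $g=3$ and $t(7,3)=1\ne 0$, a contradiction. If $c_2=8$ then $g=4$; as $C$ is a smooth curve in the smooth threefold $Q$, $\Ii_{C/Q}$ is locally generated by two elements, so $h^0(\Ii_{C/Q}(2))\ge 2$, and for two general members $\mathcal{Q}_1,\mathcal{Q}_2$ of $|\Ii_{C/Q}(2)|$, lifted to quadric hypersurfaces in $\PP^4$, the scheme $Q\cap\mathcal{Q}_1\cap\mathcal{Q}_2$ is a complete intersection of three quadrics in $\PP^4$; since the base locus of $|\Ii_{C/Q}(2)|$ is exactly $C$, this intersection is proper, hence pure of dimension $1$ and degree $8$, so it coincides with the degree-$8$ curve $C$. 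By adjunction $\omega_C\cong\Oo_C(1)$, whence $2g-2=8$ and $g=5\ne 4$, a contradiction. Therefore $c_2\in\{5,6\}$.

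For the ``only if'' part, assume $c_2\in\{5,6\}$. By Lemma \ref{arr} and Lemma \ref{conic}, no connected component of the smooth curve $C$ is a line or a smooth conic, so every component has degree $\ge 3$; since $\deg C=c_2\le 6$, either $C$ is connected, or $c_2=6$ and $C=C_1\sqcup C_2$ with each $C_i$ a smooth irreducible curve of degree $3$ — necessarily a twisted cubic, because a plane cubic cannot lie on $Q$ (a smooth quadric threefold contains no plane). In the latter case, choose $p\in C_2\cap\langle C_1\rangle$, which is nonempty since the hyperplane $\langle C_1\rangle$ meets every curve; as $p\notin C_1$ and $\Sec(C_1)=\langle C_1\rangle$, some line $\ell$ through $p$ satisfies $\length(\ell\cap C_1)\ge 2$, hence $\length(\ell\cap C)\ge 3$ with $\ell\not\subset C$, contradicting global generation of $\Ii_{C/Q}(2)$. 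Hence $C$ is connected, and being smooth it is irreducible of degree $c_2$.

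It remains to compute $g=p_a(C)$. As $C$ is smooth, irreducible and nondegenerate of degree $c_2$ in $\PP^4$, Castelnuovo's genus bound gives $g\le 1$ if $c_2=5$ and $g\le 2$ if $c_2=6$. On the other hand $t(5,0)=1$, $t(6,0)=4$, $t(6,1)=2$ are all nonzero, so the genus cannot be $0$ when $c_2=5$, nor $0$ or $1$ when $c_2=6$. Hence $g=1=c_2-4$ if $c_2=5$ and $g=2=c_2-4$ if $c_2=6$, which completes the proof. The steps needing the most care are the complete-intersection argument for $c_2=8$ — where one must use global generation of $\Ii_{C/Q}(2)$, not merely $h^0(\Ii_{C/Q}(2))\ge 2$, to guarantee that the triple intersection is proper — and the reduction to connected $C$ when $c_2=6$.
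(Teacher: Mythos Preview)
Your argument is correct. The ``only if'' direction follows the paper's proof closely: connectedness from Lemmas~\ref{arr} and~\ref{conic}, nondegeneracy (which you import from $H^0(\Ee(-1))=0$, while the paper re-derives it by ad hoc arguments in each case), Castelnuovo's bound, and the trisecant formula to pin down the genus. Your treatment of the disconnected case for $c_2=6$ differs slightly: the paper restricts to the hyperplane section $Q'=\langle D_1\rangle\cap Q$ and finds a line in a ruling of $Q'$ meeting $D_1$ twice and passing through a point of $D_2\cap Q'$; you instead pick $p\in C_2\cap\langle C_1\rangle$ and use $\Sec(C_1)=\langle C_1\rangle$ to get a secant through $p$. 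Both work, and yours is a bit cleaner (note that your line is automatically contained in $Q$, since it meets $Q$ in a scheme of length $\ge 3$).

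The ``if'' direction is not proved in the paper's lemma at all: the paper establishes it only through the surrounding results (the separate lemma showing $c_2=7$ is impossible, and the later analysis forcing $g=5$ when $c_2=8$). Your direct argument---$t(7,3)=1$ for $c_2=7$, and for $c_2=8$ the observation that global generation of $\Ii_{C/Q}(2)$ forces $C$ to be the full complete intersection of three quadrics in $\PP^4$, hence of genus $5$---is a genuine shortcut. The one place to be careful, as you note, is that for $c_2=8$ you need the intersection $Q\cap\mathcal Q_1\cap\mathcal Q_2$ to be proper; this follows since the base locus of $|\Ii_{C/Q}(2)|$ is exactly $C$, so a general second member shares no surface component with the first.
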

\begin{proof}
Let us assume that $c_2=5$. By Lemmas \ref{arr} and \ref{conic}, the associated curve $C$ is connected. First assume that $C$ is contained in a hyperplane $H$. Set $Q':= H\cap Q$. If $Q'$ is a quadric cone, then $C$ contains its vertex (Exercise V.2.9 in \cite{Hartshorne}).
Hence even if $Q'$ is not smooth the curve $C$ has infinitely many 3-secant lines.
Hence $\mathcal {I}_C(2)$ is not spanned. Hence $C$ spans $\mathbb {P}^4$. Hence
$C$ has genus $g\in \{0,1\}$. Since $t(5,0)=1$, we have $g=1$. In this case $C$ is ACM and $\mathcal {I}_C(2)$
is spanned. In this case, we have $(c_2,c_3)=(5,5)$.

Let us assume that $c_2=6$ and the associated curve $C$ is not connected. Again by Lemmas \ref{arr} and \ref{conic}, we have $C=D_1\sqcup D_2$ with $D_i$
rational normal curve of a hyperplane $H_i$. Set $Q':= H_1\cap Q$. We cannot have $H_2=H_1$,
because $D_1\cap D_2 = \emptyset$. Hence $\deg (D_2\cap Q')=3$; enough to have
a line $L\subset Q'$ with $\deg (L\cap (D_1\cup (Q'\cap D_2)) \ge 3$; hence $\mathcal {I}_C(2)$
is not globally generated. Now assume that $C$ is connected. If $C$ is contained in a hyperplane, then
$C$ is not contained in two other quadrics, because it has degree $>4$, a contradiction.
By Castelnuovo's upper bound for the genus, $Y$ has genus $g\in \{0,1,2\}$. Since $t(6,0)=4$ and $t(6,1)=2$, so we have $g=2$.
$\mathcal {I}_{C,\mathbb {P}^4}(2)$ is globally generated and so is $\mathcal {I}_C(2)$. In this case, we have $(c_2,c_3)=(6,8)$.
\end{proof}

\begin{remark}\label{i0}
Let $C\subset \mathbb {P}^4$ be a linearly normal curve of genus $g\in \{0,1,2\}$ and degree
$g+4$. Notice that $g+4 \ge 2g+2$ in all cases. Hence the homogeneous ideal of $C$
in $\mathbb {P}^4$ is generated by quadrics (old result, first proved in \cite{f}, see \cite{gl}, p. 302, for a proof and the history
of the theorem). Hence $\Ii_{C,\mathbb {P}^4}(2)$ is spanned. Hence $\Ii _C(2)$ is spanned. Hence
these cases give spanned vector bundles. The same is true for $c_2=4$ when we take as $C$
a linearly normal elliptic curve of a hyperplane of $\mathbb {P}^4$, because this curve
is a complete intersection inside $Q$ of a hyperplane and a degree $2$ hypersurface.
\end{remark}
\begin{lemma}
There is no globally generated vector bundle of rank 3 on $Q$ with $(c_1,c_2)=(2,7)$.
\end{lemma}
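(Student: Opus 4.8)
The plan is to run the same classification machinery developed for $c_2=4,5,6$, this time deriving a contradiction. Suppose $\Ee$ is globally generated of rank $3$ with $(c_1,c_2)=(2,7)$ and let $C$ be the associated curve of degree $7$ from the sequence (\ref{eqa2}). By Lemmas \ref{arr} and \ref{conic}, no line and no smooth conic is a connected component of $C$, so every connected component has degree at least $3$. First I would rule out a disconnected $C$: writing $C = D_1 \sqcup D'$ with $\deg(D_1)=3$ and $\deg(D')=4$ (the only possibility given the degree constraints, since a $3+3$ split would force $c_2=6$ and any component of degree $\ge 3$ uses up at least three), I would take a general hyperplane section $Q' := H \cap Q$ with $H \supseteq \langle D_1 \rangle$; since $D_1$ is a twisted cubic spanning a $\PP^3$, a general hyperplane through it is a smooth quadric surface on which $D_1$ has type $(1,2)$, and $D' \cap Q'$ is a nonempty zero-dimensional scheme disjoint from $D_1$, so one finds a line $L \subset Q'$ with $\deg(L \cap (D_1 \cup (Q'\cap D'))) \ge 3$; hence $\Ii_{C}(2)$ restricted to $Q'$ fails to be globally generated, a contradiction. (The disjointness of the components and a dimension count guarantee such a trisecant exists.)

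So $C$ is smooth, connected and irreducible of degree $7$. Next I would bound the genus. If $C$ lies in a hyperplane $H$, then $Q' := H\cap Q$ is a quadric surface (smooth or a cone); on a smooth $Q'$ the curve has type $(a,b)$ with $a+b=7$, forcing a line of $Q'$ to meet $C$ in at least $\min(a,b)\ge 3$ points, while on a cone $C$ passes through the vertex (Exercise V.2.9 in \cite{Hartshorne}) and again has infinitely many trisecants; in either case $\Ii_C(2)$ is not spanned. Therefore $C$ spans $\PP^4$, and by Castelnuovo's bound a nondegenerate degree-$7$ curve in $\PP^4$ has $g \le 3$. Now I invoke the trisecant formula: $t(7,g) = \frac{5\cdot 4\cdot 3}{6} - g\cdot 3 = 10 - 3g$, which is positive for $g \le 3$ and in particular $t(7,3)=1 \ne 0$. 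By the Remark following the trisecant formula, $t(d,g)\ne 0$ forces $\Ii_C(2)$ to be non-spanned, hence $\Ee$ cannot be globally generated. This exhausts all cases and proves the statement.

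The main obstacle I anticipate is the disconnected case: one must be careful that $7 = 3+4$ is genuinely the only partition into parts $\ge 3$ (it is, since $3+3$ gives degree $6$ and $\ge 5$ as a single component contradicts connectedness only after one has fixed that each piece has degree $\ge 3$), and one must verify that the hyperplane-section argument produces an actual trisecant line rather than merely a non-globally-generated twisted ideal sheaf — though in fact the cleaner route is exactly the one used in the $c_2=6$ lemma, namely showing $\Ii_{D_1 \cup (Q'\cap D')\,,\,Q'}(2,2) \cong \Ii_{Q'\cap D'\,,\,Q'}(2-a,2-b)$ (for the type $(a,b)$ of $D_1$ on $Q'$) is not globally generated because $Q'\cap D'$ is a positive-length scheme. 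A secondary subtlety is handling the case where the general hyperplane through $D_1$ happens to be a quadric cone, but a general such hyperplane is smooth since $D_1$ is nondegenerate in its $\PP^3$ and not every quadric through it is singular. Everything else is a direct application of Lemmas \ref{arr} and \ref{conic}, Castelnuovo's genus bound, and the trisecant count already recorded in the excerpt.
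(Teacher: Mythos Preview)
Your overall strategy is sound, and for the disconnected case you follow the paper exactly (both defer to the $c_2=6$ argument with $C=D_1\sqcup D'$, $\deg D_1=3$). For the connected case, however, the paper takes a different and shorter route: since $\Ii_C(2)$ is globally generated with $\deg C=7$, the curve $C$ is linked to a line $L$ inside a complete intersection $X$ of two quadric sections of $Q$, yielding the liaison exact sequence
\[
0\to \Ii_X(2)\to \Ii_C(2)\to \omega_L(1)\to 0,
\]
and $\omega_L(1)\cong\Oo_L(-1)$ is not globally generated, a contradiction. This bypasses both Castelnuovo's bound and the trisecant count. Your route---bound $g\le 3$ by Castelnuovo and observe $t(7,g)=10-3g\ne 0$---is equally valid and simply continues the pattern used for $c_2=5,6$; the liaison argument is more direct and does not require separately disposing of the degenerate (hyperplane) subcase.

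Two slips to correct. First, the hyperplane $H=\langle D_1\rangle$ spanned by the twisted cubic is \emph{unique}, so there is no ``general'' choice, and $Q'=H\cap Q$ may well be a quadric cone; your dismissal (``a general such hyperplane is smooth since \dots\ not every quadric through it is singular'') is beside the point because $Q$ is fixed. You must handle the cone case directly, as in the $c_2=5$ lemma. Second, in the hyperplane case for connected $C$ of type $(a,b)$ with $a+b=7$, the inequality $\min(a,b)\ge 3$ is false (e.g.\ type $(1,6)$); what you need is that lines of one ruling meet $C$ in $\max(a,b)\ge 4$ points, which does give the required trisecants.
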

\begin{proof}
Let us assume that the associated curve $C$ is not connected. Since we excluded the case of components of degree $\le 2$, we have $C = D_1\sqcup D_2$ with $D_1$ a rational normal curve of degree 3 and $\deg (D_2)=4$; this case is excluded as in the case $c_2=6$. Now let us assume that $C$ is connected. Since $C$ is linked by a complete intersection $X$ of two quadric hypersurfaces to a line $L$, we have the following exact sequence
$$0\to \Ii_X(2) \to \Ii_C(2) \to \omega_L(1) \to 0,$$
which is not possible since $\omega_L(1)\simeq \Oo_L(-1)$ is not globally generated.
\end{proof}

As the final case, let us assume that $c_2=8$. Since $C$ is a a complete intersection, we have a surjection
$\Oo _Q^{\oplus 2} \to \Ii _C(2)$. This surjection and the sequence (\ref{eqa2}) give :
\begin{equation}\label{eqc1}
0\to \Oo_Q(-2) \stackrel{\psi}{\to} \Oo_Q^{\oplus 4} \stackrel{\phi}{\to} \Ee \to 0.
\end{equation}
Notice that $\psi$ is given by $4$ sections of $H^0(\mathcal {O}_{Q}(2))$ without any
common zero in $Q$. Since $\Ee$ has no trivial direct summand, so $h^0(\Ee^{\vee})=0$ and $h^1(\Ee^{\vee})=10$. On the other hand, we have $h^1(\Ii_C)=h^1(\Ee(-2))=0$ and so $h^0(\Oo_C)=1$. In particular, $C$ is an irreducible and smooth curve of degree $8$ and genus $5$. Conversely any such $4$ sections give an exact sequence
(\ref{eqc1}) in which $\Ee$ is a globally generated vector bundle with rank $3$ and $(c_1,c_2,c_3)=(2,8,16)$. Any such $4$ sections are linearly independent. A general $4$-dimensional linear
subspace of $H^0(\mathcal {O}_{Q}(2))$ has no common zero in $Q$, but some of them
have a common zero. Hence the set of all exact sequences (\ref{eqc1}) is parametrized
(not one-to-one) by a non-empty open subset $\mathcal {B}$ of the Grassmannian
of all $3$-dimensional linear subspaces of $\mathbb {P}^{13}$. In particular the set of all
such bundles $\Ee$ is irreducible and unirational. We do not know when two points of $\mathcal {B}$ gives isomorphic bundles
or $\mbox{Aut}(Q)$-isomorphic bundles (i.e. bundles $\Ee$, $\Ee'$ such that there
is $f\in \mbox{Aut}(Q)$ with $\Ee'\cong f^\ast (\Ee)$).

Note that $h^1(\mathcal {I}_C(t))=0$ for all $t$.
Hence $h^1(\Ee(t)) =0$ for all $t$. Since $h^0(\mathcal {I}_C(1))=0$, we have
$h^0(\Ee(-1)) =0$. Hence $\Ee$ is stable if and only if $h^0(\Ee^\vee (1)) =0$, i.e. if and
only if $h^3(\Ee(-2)) =0$. Since $h^0(\mathcal {O}_{Q}(-2))=0$ and $h^4(\mathcal {\Ff})=0$
for each coherent sheaf $\mathcal {\Ff}$ on $Q$, we get that any such $\Ee$ is stable.

Form (\ref{eqc1}) we get $h^0(\Ee)=4$. Hence the map $\phi$ in (\ref{eqc1}) is uniquely
determined by $\Ee$ and a choice of a basis of $H^0(\Ee) \cong \mathbb {K}^4$.
Hence $\mathcal {B}$ is a one-to-one parametrization of such bundles.

Summarizing the arguments so far, we have the following:
\begin{proposition}
A vector bundle $\Ee$ of rank 3 on $Q$ with $c_1=2$ and $H^0(\Ee(-1))=0$ is globally generated if and only if
$$(c_2,c_3)\in \{(4,0), (4,2), (5,5), (6,8), (8,16) \}.$$
Except when $(c_2,c_3)=(4,0)$, the associated curve $C$ is a smooth irreducible curve of degree $c_2$ and genus $(c_3-c_2+2)/2$.
\end{proposition}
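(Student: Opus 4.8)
The plan is to assemble the lemmas and propositions proved earlier in this section, since the statement is essentially their consolidation. First I would pin down the range of $c_2$: twisting the sequence (\ref{eqa2}) by $\Oo_Q(-1)$ and using $H^0(\Oo_Q(-1))=H^1(\Oo_Q(-1))=0$ identifies $H^0(\Ee(-1))$ with $H^0(\Ii_C(1))$, so the hypothesis $H^0(\Ee(-1))=0$ says exactly that $C$ lies in no hyperplane section of $Q$; together with Lemma \ref{arr} this forces $\deg C\ge 4$, and since $C$ sits in a complete intersection of two quadric sections we get $c_2=\deg C\le 8$, so $c_2\in\{4,5,6,7,8\}$ (as already observed above). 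What remains is to determine the admissible pairs $(c_2,c_3)$ for each of these five values and, for each admissible pair, to exhibit a globally generated bundle realizing it.

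Next I would run through the five values of $c_2$ one at a time. For $c_2=4$ I would invoke the earlier Proposition classifying globally generated rank-$3$ bundles with $(c_1,c_2)=(2,4)$ and $h^0(\Ee(-1))=0$: the two pull-back families there have Chern data $(c_2,c_3)=(4,0)$ and the quotients of $\Sigma\oplus\Sigma$ by $\Oo_Q$ have $(4,2)$, with the converse contained in the Remarks surrounding that Proposition and in the argument inside its proof. For $c_2=5$ and $c_2=6$ I would use the Lemma identifying these with smooth irreducible curves of genus $c_2-4$; plugging that into Remark \ref{pa} (with $c_1=2$, $d=c_2$) gives $c_3=2(c_2-4)-2+c_2$, so $(c_2,c_3)\in\{(5,5),(6,8)\}$, and Remark \ref{i0} supplies the converse constructions. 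For $c_2=7$ I would cite the Lemma showing no such bundle exists. For $c_2=8$ I would use that $C$ is a complete intersection of two quadric sections together with the resolution (\ref{eqc1}), which forces $C$ smooth irreducible of degree $8$ and genus $5$, hence $(c_2,c_3)=(8,16)$; the converse is the observation that a generic $4$-dimensional subspace of $H^0(\Oo_Q(2))$ produces such an $\Ee$ through (\ref{eqc1}).

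Finally I would record the uniform genus statement: Remark \ref{pa} with $c_1=2$ and $d=c_2$ gives $c_3=2p_a(C)-2+c_2$, i.e. $p_a(C)=(c_3-c_2+2)/2$, which is a nonnegative integer exactly for $(c_2,c_3)\in\{(4,2),(5,5),(6,8),(8,16)\}$; in each of those cases the relevant earlier result already establishes that $C$ is smooth and irreducible, while for $(c_2,c_3)=(4,0)$ the associated curve is the disjoint union of two conics. I do not expect a genuine obstacle here, since every step merely quotes an already-proved statement; the only point demanding care is to pair each value of $c_2$ in the ``only if'' direction with its matching converse construction, so that the resulting list is an exact characterization rather than merely an inclusion of possibilities.
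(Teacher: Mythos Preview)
Your proposal is correct and matches the paper's approach: the proposition is stated in the paper immediately after the words ``Summarizing the arguments so far,'' with no separate proof, because it is precisely the consolidation of the earlier Proposition on $(c_1,c_2)=(2,4)$, the Lemma on $c_2\in\{5,6\}$, Remark \ref{i0}, the Lemma excluding $c_2=7$, and the discussion of the complete-intersection case $c_2=8$ via (\ref{eqc1}). Your plan to pair each ``only if'' step with its converse construction and to extract the genus from Remark \ref{pa} is exactly how the paper organizes the material.
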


\section{Higher Rank Case}
Let $\Ee$ be a globally generated vector bundle of rank $r>3$ on $Q$. We know that it fits into an exact sequence
\begin{equation}\label{eqa7}
0 \to \Oo _Q^{\oplus(r-3)} \to \Ee \to \Ff \to 0
\end{equation}
where $\Ff$ is a globally generated vector bundle of rank $3$ on $Q$ with $c_i(\Ff)=c_i(\Ee )$, $i=1,2,3$.
Conversely, since $h^1(\Oo _Q)=0$, if $\Ff$ is a rank $3$ spanned vector bundle and $\Ee$ is any coherent sheaf fitting into the sequence (\ref{eqa7}), then $\Ee$ is a rank $r$ spanned vector bundle with
$c_i(\Ee)=c_i(\Ff)$, $i=1,2,3$, and $h^0(\Ee )=h^0(\Ff)+r-3$. This does not give us a complete classification, but only a very rough one unless $h^1(\Ff^\vee )=0$ (e.g. if $\Ff$ splits or is isomorphic to a direct sum of a line bundle and a twist of the spinor bundle; in these cases the sequence (\ref{eqa7}) splits and hence $\Ee \cong \Oo _Q^{\oplus (r-3)}\oplus \Ff$.)

We assume $c_1\in \{1,2\}$ and set $\alpha (\Ff):= h^1(\Ff^\vee ) = h^2(\Ff(-3))$. From the exact sequence
\begin{equation}\label{eqd2}
0 \to \Oo _Q^{\oplus 2}\to \Ff \to \mathcal {I}_C(c_1) \to 0
\end{equation}
where $C$ is a smooth curve of degree $d:= c_2$ and genus $g$, we have
$h^1(\Oo _C(c_1-3)) = (c_1-3)d+g-1$ by the Riemann-Roch theorem. Since $h^3(\Oo _Q(-3))=1$, we get
\begin{equation}\label{eqd3}
(3-c_1)d+g-3 \le \alpha (\Ff) \le (3-c_1)d+g-1
\end{equation}

\begin{lemma}\label{uuu}
For the existence of a trivial direct summand for $\Ee$, we obtain the following statements :
\begin{enumerate}
\item Let $\Ee$ be a vector bundle fitting in (\ref{eqa7}) with the extension induced
by $e_1,\dots ,e_{r-3} \in H^1(\Ff^\vee )$. Then $\Ee$ has $\Oo _Q$ as a direct factor if and
only if $e_1,\dots ,e_{r-3}$ are not linearly independent.

\item If $\alpha(\Ff)<r-3$, then any $\Ee$ fitting in (\ref{eqa7}) has $\Oo _Q^{\oplus (r-3-\alpha (\Ff))}$ as a direct
factor. If $0 < r-3 \le \alpha (F)$, then $\Ee $ given by a general extension (\ref{eqa7}) has no factor isomorphic to $\Oo _Q$.
\end{enumerate}
\end{lemma}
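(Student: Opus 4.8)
The plan is to reduce the whole assertion to the linear algebra of the extension class of $(\ref{eqa7})$. Write $m:=r-3$ and identify that class with a tuple $\xi=(e_1,\dots,e_m)$ in $\Ext^1(\Ff,\Oo_Q^{\oplus m})\cong H^1(\Ff^\vee)^{\oplus m}$; the group $\Aut(\Oo_Q^{\oplus m})=GL_m$ acts on $\xi$ through its standard action on $\Oo_Q^{\oplus m}$. I would restrict to $\Ff$ with no trivial direct summand, which is the only case capable of producing an indecomposable $\Ee$: since $H^1(\Oo_Q)=0$, a bundle $\Ff\cong\Gg\oplus\Oo_Q$ forces $\xi$ to come from $\Ext^1(\Gg,\Oo_Q^{\oplus m})$, so $\Ee\cong\Ee'\oplus\Oo_Q$ for every choice of $\xi$.

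For the ``if'' direction of $(1)$, let $s:=\dim\langle e_1,\dots,e_m\rangle<m$. A suitable element of $GL_m$ brings $\xi$ to a class with $e_{s+1}=\dots=e_m=0$, that is, to a class in the summand $\Ext^1(\Ff,\Oo_Q^{\oplus s})$ of $\Ext^1(\Ff,\Oo_Q^{\oplus s}\oplus\Oo_Q^{\oplus(m-s)})$. Since an extension of $\Ff$ by $A\oplus B$ whose class is $(\xi_A,0)$ is isomorphic to $\Ee_A\oplus B$, with $\Ee_A$ the extension of $\Ff$ by $A$ of class $\xi_A$, one obtains $\Ee\cong\Ee_0\oplus\Oo_Q^{\oplus(m-s)}$, and $m-s\geq1$ gives a trivial factor.

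For the ``only if'' direction of $(1)$, assume that $\Oo_Q$ is a direct factor of $\Ee$ while, for contradiction, $e_1,\dots,e_m$ are linearly independent. Applying $\Hom(-,\Oo_Q)$ to $(\ref{eqa7})$ gives an exact sequence whose connecting map $\delta\colon\Hom(\Oo_Q^{\oplus m},\Oo_Q)\cong\mathbb{K}^m\to\Ext^1(\Ff,\Oo_Q)$ is $(\mu_1,\dots,\mu_m)\mapsto\sum_i\mu_i e_i$. Linear independence of the $e_i$ makes $\delta$ injective, so the restriction map $\Hom(\Ee,\Oo_Q)\to\Hom(\Oo_Q^{\oplus m},\Oo_Q)$ vanishes and $\Hom(\Ee,\Oo_Q)\cong\Hom(\Ff,\Oo_Q)=H^0(\Ff^\vee)$. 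The decisive input is then that $H^0(\Ff^\vee)=0$ for every $\Ff$ at issue: this is immediate from stability when $\Ff=\Aa_P$ or $\Ff=\Aa_P^\vee(1)$, and follows from the defining resolutions otherwise (for instance, dualizing $0\to\Oo_Q\to\Sigma^{\oplus 2}\to\Ee\to0$ yields $H^0(\Ee^\vee)\hookrightarrow H^0(\Sigma(-1))^{\oplus 2}=0$). Hence $\Hom(\Ee,\Oo_Q)=0$, contradicting the projection onto the direct factor $\Oo_Q$; so the $e_i$ must be linearly dependent.

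Part $(2)$ then follows formally from $(1)$. If $\alpha(\Ff)<m$ then $s=\dim\langle e_1,\dots,e_m\rangle\leq\dim H^1(\Ff^\vee)=\alpha(\Ff)<m$, and the computation above gives $\Ee\cong\Ee_0\oplus\Oo_Q^{\oplus(m-s)}$ with $m-s\geq r-3-\alpha(\Ff)$. If $0<m\leq\alpha(\Ff)$, then inside $H^1(\Ff^\vee)^{\oplus m}$ the linearly dependent tuples form the proper closed subset cut out by the maximal minors, so a general $\xi$ consists of linearly independent vectors and, by the ``only if'' direction of $(1)$, the resulting $\Ee$ has no $\Oo_Q$-factor. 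I expect the one genuinely delicate step to be the vanishing $H^0(\Ff^\vee)=0$ used in the converse of $(1)$: it is precisely what rules out a copy of $\Oo_Q$ sitting in $\Ee$ transversally to the subsheaf $\Oo_Q^{\oplus m}$, i.e. mapping isomorphically onto a direct summand $\Oo_Q$ of $\Ff$; the rest is bookkeeping with the extension class and the vanishing $H^1(\Oo_Q)=0$.
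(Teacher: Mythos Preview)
Your argument is correct and reaches the same conclusion as the paper, but the converse direction of (1) is handled differently. The paper argues directly with bases: given a splitting $\Ee\cong\Oo_Q\oplus\Mm$, it composes the inclusion $\Oo_Q^{\oplus m}\hookrightarrow\Ee$ with the projection to $\Oo_Q$; when this composite $\psi$ is nonzero it is surjective, and an automorphism of $\Oo_Q^{\oplus m}$ arranges that one standard factor maps isomorphically onto $\Oo_Q$, forcing the corresponding new extension class to vanish and hence the original $e_i$ to be dependent. You instead run the long exact sequence of $\Hom(-,\Oo_Q)$ and identify the connecting map with $(\mu_i)\mapsto\sum\mu_ie_i$, so that linear independence of the $e_i$ forces $\Hom(\Ee,\Oo_Q)\cong H^0(\Ff^\vee)$.

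The two routes meet at exactly the same hidden hypothesis: one needs $H^0(\Ff^\vee)=0$ (equivalently, $\Ff$ has no trivial factor). You make this explicit and verify it for the relevant bundles; the paper's proof writes ``First assume $\psi\ne 0$'' and never revisits the case $\psi=0$, which is precisely where a nonzero section of $\Ff^\vee$ would produce a trivial factor of $\Ee$ despite independent $e_i$. In that sense your write-up is a bit more careful. Conversely, the paper's hands-on argument avoids invoking the identification of the connecting homomorphism and is marginally more elementary. Part (2) is the same formal consequence of (1) in both treatments.
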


\begin{proof}
Let us take $\Ee$ given by (\ref{eqa7}) with respect to the extension classes
$e_1,\dots ,e_{r-3} $ in $H^1(\Ff^\vee )$. First assume that $e_1,\dots ,e_{r-3}$ are linearly dependent. Changing a basis of the trivial bundle  $\Oo _Q^{\oplus (r-3)}$ we reduce to the case $e_{r-3}=0$. In this
case $\Ee \cong \Gg \oplus \Oo _Q$ with either $\Gg = \Ff$ (case $r=4$) or $\Gg$ extension of $\Ff$
by $r-4$ copies of $\Oo _Q$ using the extensions $e_1,\dots ,e_{r-4}$ (case $r\ge 5$).
Now assume that $\Oo _Q$ is a direct factor of $\Ee$ and write $\Ee = \Oo _G\oplus \Mm$.
Any map $\Oo _Q \to \Oo _Q$ is either an isomorphism or the zero map.
First assume that the composition $\psi$ of the map $ \Oo _Q^{\oplus (r-3)} \to \Ee$ with the projection
$\Oo _Q$ is non-zero. We get that $\psi$ is surjective. Linear algebra says
that we may apply an endomorphism of $ \Oo _Q^{\oplus (r-3)}$ after which the first factor
of $ \Oo _Q^{\oplus (r-3)}$ goes isomorphically onto the first factor of $\mathcal {O}_Q\oplus \Gg$.
Call $f_1,\dots ,f_{r-3}\in H^1(\Ff^\vee )$ the extensions in the new basis. The extensions
$f_2,\dots ,f_{r-3}$ give $\Gg$, while $f_1=0$. Hence $e_1,\dots ,e_{r-3}$ are not linearly independent.

Part (ii) follows from part (i).
\end{proof}

\begin{lemma}\label{gg0}
We have $h^1(\mathcal {A}^\vee ) =1$.
\end{lemma}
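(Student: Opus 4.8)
The plan is to read off $h^1(\Aa^\vee)$ directly from the defining sequence (\ref{ees1}). Since $\Aa$ is a vector bundle, the inclusion $\Oo_Q(-1)\hookrightarrow \Oo_Q^{\oplus 4}$ in (\ref{ees1}) is a sub-bundle inclusion, so dualizing (\ref{ees1}) gives a short exact sequence
$$0\to \Aa^\vee \to \Oo_Q^{\oplus 4}\to \Oo_Q(1)\to 0.$$
I would then pass to the long exact sequence in cohomology. Using $h^1(\Oo_Q)=0$, it reads
$$0\to H^0(\Aa^\vee)\to H^0(\Oo_Q^{\oplus 4})\to H^0(\Oo_Q(1))\to H^1(\Aa^\vee)\to 0.$$
Since $h^0(\Oo_Q^{\oplus 4})=4$ and $h^0(\Oo_Q(1))=5$, it suffices to show that the map $\mu\colon H^0(\Oo_Q^{\oplus 4})\to H^0(\Oo_Q(1))$ appearing here is injective; then automatically $h^0(\Aa^\vee)=0$ and $h^1(\Aa^\vee)=5-4=1$.

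To prove the injectivity I would unwind the definition $\Aa=\phi_P^\ast(T\PP^3(-1))$: the surjection $\Oo_Q^{\oplus 4}\to\Oo_Q(1)$ above is the $\phi_P$-pull-back of the surjection $\Oo_{\PP^3}^{\oplus 4}\to\Oo_{\PP^3}(1)$ dual to the Euler sequence, so on global sections $\mu$ sends the standard basis to the four linear forms on $\PP^4$ cutting out the center $P$, restricted to $Q$. Those four forms are a basis of the hyperplane of $H^0(\Oo_{\PP^4}(1))$ consisting of linear forms vanishing at $P$, hence linearly independent; and because $P\notin Q$ the restriction map $H^0(\Oo_{\PP^4}(1))\to H^0(\Oo_Q(1))$ is an isomorphism of $5$-dimensional vector spaces, so they remain linearly independent on $Q$. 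Thus $\mu$ is injective, and $h^1(\Aa^\vee)=1$. (Alternatively, $h^0(\Aa^\vee)=0$ is immediate from the stability of $\Aa$, since $c_1(\Aa^\vee)=-1<0$.)

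There is no real obstacle here: the computation is a two-line diagram chase once one has (\ref{ees1}) and its dual, and the only point requiring a moment's thought is the identification of $\mu$ with the four defining forms of the projection together with its injectivity, both of which follow instantly from $P\notin Q$. One could equally argue via Serre duality, $h^1(\Aa^\vee)=h^2(\Aa(-3))$, using the twist of (\ref{ees1}) by $\Oo_Q(-3)$ together with the vanishing $h^i(\Oo_Q(k))=0$ for $0<i<3$, but the dual sequence is the cleaner route.
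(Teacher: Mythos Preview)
Your proof is correct: dualizing (\ref{ees1}) and taking the long exact cohomology sequence, together with the injectivity of the induced map on $H^0$ (which you verify either by unwinding the projection $\phi_P$ or, equivalently, by invoking the stability of $\Aa$ to get $h^0(\Aa^\vee)=0$), gives $h^1(\Aa^\vee)=5-4=1$ immediately.

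The paper does not argue this way; its proof is the single sentence ``It is derived from the previous Lemma with $\Ee=\Oo_Q^{\oplus 4}$ and $\Ff=\Aa$,'' an appeal to Lemma~\ref{uuu}. Taken literally this is puzzling, since there is no short exact sequence $0\to\Oo_Q\to\Oo_Q^{\oplus 4}\to\Aa\to 0$ (any quotient of $\Oo_Q^{\oplus 4}$ by a trivial rank-one subbundle is again trivial), so the hypotheses of Lemma~\ref{uuu} are not satisfied for this choice of $\Ee$ and $\Ff$. Whatever the intended reading of that line, your direct computation from the dual of (\ref{ees1}) is shorter, fully transparent, and logically self-contained. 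A third route, used later in the paper in the proof of Lemma~\ref{rm1}, is the projection formula: since $\phi_P$ is a double cover with $\phi_{P*}\Oo_Q\cong\Oo_{\PP^3}\oplus\Oo_{\PP^3}(-1)$, one gets $h^1(\Aa^\vee)=h^1(\Omega_{\PP^3}(1))+h^1(\Omega_{\PP^3})=0+1=1$; this is equivalent in spirit to your argument.
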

\begin{proof}
It is derived from the previous Lemma with $\Ee=\Oo_Q^{\oplus 4}$ and $\Ff=\Aa$.
\end{proof}

Hence there is a non-split extension
\begin{equation}\label{eqgg0}
0 \to \Oo _Q \to \mathcal {B}_{\mathcal {A}} \to \mathcal {A} \to 0
\end{equation}
Since $h^1(\mathcal {A}^\vee ) =1$ there is, up to isomorphisms, a unique vector bundle $\mathcal {B}_{\mathcal {A}}$ fitting in an extension (\ref{eqgg0}) for a fixed
bundle $\mathcal {A}$.

\begin{definition}
For the tangent bundle $T\PP^4$ of $\PP^4$, we define
$$\Phi:= T\PP^4(-1)\vert _Q.$$
\end{definition}
By its definition, $\Phi$ admits the resolution :
\begin{equation}\label{eqaa0}
0 \to \Oo _Q(-1) \to \Oo _Q^{\oplus 5} \to \Phi \to 0
\end{equation}

\begin{lemma}\label{ggg1}
For each $\mathcal {A}$, we have $\mathcal {B}_{\mathcal {A}} \cong \Phi$.
\end{lemma}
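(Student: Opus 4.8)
The plan is to realize $\Phi$ itself as a non-split extension of $\mathcal{A}$ by $\Oo_Q$, and then to invoke uniqueness. By Lemma~\ref{gg0} we have $\dim\mathrm{Ext}^1(\mathcal{A},\Oo_Q)=h^1(\mathcal{A}^\vee)=1$, so up to isomorphism there is a unique vector bundle fitting in a non-split sequence (\ref{eqgg0}), namely $\mathcal{B}_{\mathcal{A}}$; hence it suffices to exhibit one non-split exact sequence $0\to\Oo_Q\to\Phi\to\mathcal{A}\to 0$.

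To build such a sequence, fix homogeneous coordinates $x_0,\dots,x_4$ on $\PP^4$ with $P=[0:0:0:0:1]$, so that $\phi_P\colon Q\to\PP^3$ is induced by $[x_0:\cdots:x_4]\mapsto[x_0:x_1:x_2:x_3]$. Since the $\phi_P$-preimage of a hyperplane of $\PP^3$ is a hyperplane section of $Q$, we have $\phi_P^\ast\Oo_{\PP^3}(1)\cong\Oo_Q(1)$; consequently $\mathcal{A}=\phi_P^\ast(T\PP^3(-1))\cong(\phi_P^\ast T\PP^3)(-1)$, and (\ref{ees1}) is the pull-back along $\phi_P$ of the Euler sequence of $\PP^3$ twisted by $\Oo_{\PP^3}(-1)$. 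Thus the injection in (\ref{ees1}) is given by the column $(x_0,x_1,x_2,x_3)$, while the injection in (\ref{eqaa0}), coming from the analogous resolution of $\Phi=T\PP^4(-1)\vert_Q$, is given by $(x_0,x_1,x_2,x_3,x_4)$. The projection $\mathrm{pr}\colon\Oo_Q^{\oplus 5}\to\Oo_Q^{\oplus 4}$ onto the first four summands, together with the identity of $\Oo_Q(-1)$, then forms a commutative ladder from (\ref{eqaa0}) to (\ref{ees1}): the left-hand square commutes because deleting the last entry of $(x_0,\dots,x_4)$ gives $(x_0,\dots,x_3)$. It therefore induces a map $f\colon\Phi\to\mathcal{A}$ on cokernels, and the snake lemma applied to this ladder shows that $f$ is surjective with $\ker f\cong\ker\mathrm{pr}\cong\Oo_Q$ (the discarded summand). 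This produces an exact sequence
\begin{equation*}
0\to\Oo_Q\to\Phi\xrightarrow{\ f\ }\mathcal{A}\to 0.
\end{equation*}

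Finally I would check non-splitness by showing that $\Phi$ has no trivial direct summand. Dualizing (\ref{eqaa0}) gives $0\to\Phi^\vee\to\Oo_Q^{\oplus 5}\to\Oo_Q(1)\to 0$, and on global sections the last arrow is the evaluation $(c_0,\dots,c_4)\mapsto\sum_i c_ix_i$ from $H^0(\Oo_Q)^{\oplus 5}$ to $H^0(\Oo_Q(1))$; since $x_0,\dots,x_4$ restrict to a basis of the $5$-dimensional space $H^0(\Oo_Q(1))$, this is an isomorphism, whence $H^0(\Phi^\vee)=0$. Thus $\Phi\not\cong\Oo_Q\oplus\mathcal{A}$, the displayed sequence is non-split, and by the uniqueness noted in the first paragraph $\mathcal{B}_{\mathcal{A}}\cong\Phi$.

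The one step needing care is the middle paragraph: one must write the two Euler-type resolutions (\ref{eqaa0}) and (\ref{ees1}) with matching coordinates so that $\mathrm{pr}$ really defines a morphism of complexes, and must keep track of the twist so that $\mathcal{A}$ is $(\phi_P^\ast T\PP^3)(-1)$ and not the untwisted pull-back. Everything else is formal.
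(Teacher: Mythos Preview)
Your proof is correct and takes a genuinely different route from the paper's. The paper argues abstractly: since $\Phi$ is spanned of rank $4$, a general section gives $0\to\Oo_Q\to\Phi\to\Ff\to 0$ with $\Ff$ spanned of rank $3$ and $c_1=1$; the earlier classification then forces $\Ff\cong\mathcal{A}_O$ for \emph{some} $O$, and non-splitness follows from $h^0(\Phi^\vee)=0$ (proved via the restriction sequence for $\Omega_{\PP^4}(1)$). To pass from ``some $\mathcal{A}_O$'' to ``every $\mathcal{A}_P$'' the paper invokes transitivity of $\mathrm{Aut}(Q)$ on $\PP^4\setminus Q$ together with the implicit fact that $g^\ast\Phi\cong\Phi$. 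Your argument instead builds the surjection $\Phi\to\mathcal{A}_P$ explicitly for the given $P$ by mapping the Euler resolution (\ref{eqaa0}) onto the pulled-back Euler resolution (\ref{ees1}) via the coordinate projection, and you compute $h^0(\Phi^\vee)=0$ directly from the dual of (\ref{eqaa0}). This is more elementary and self-contained: it bypasses both the rank-$3$ classification and the automorphism step, at the cost of a small coordinate computation. The paper's approach, by contrast, illustrates how the classification already in hand does the work.
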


\begin{proof}
The vector bundle $T\PP ^4(-1)\vert _Q$ is spanned and hence it fits into an exact sequence
\begin{equation}\label{eqgg00}
0 \to \Oo_Q \to T\PP ^4(-1)\vert _Q \to \Ff \to 0
\end{equation}
for some spanned rank $3$ vector bundle $\Ff$. Since $c_3( T\PP ^4(-1)\vert _Q) \ne 0$, the classification given in the rank $3$ case gives
$ \Ff \cong \mathcal {A}$ for some pull-back bundle $\mathcal {A}$. Since $h^1(\Omega _{\PP^4}(-1)) =0$, the exact sequence
$$0 \to \Omega _{\PP^4}(-1) \to \Omega _{\PP^4}(1)\to \Phi  ^\vee \to 0$$gives $h^0(\Phi ^\vee )=0$. Hence $\Phi$ has no trivial factor.
Hence (\ref{eqgg00}) does not split. Hence $T\PP ^4(-1)\vert _Q \cong \mathcal {B}_{\mathcal {A}}$ for some $\mathcal {A}$. For any two pullbacks
$\mathcal {A}$ and $\mathcal {A}'$ there is $g\in \mbox{Aut}(Q)$ such that $g^\ast (\mathcal {A}) \cong \mathcal {A}'$, because $\mbox{Aut}(Q)$
acts transitively on the set of all points of $\mathbb {P}^4\setminus Q$ and any pull-back twisted tangent bundle is uniquely determined
by its center of projection. Hence $g^\ast (\mathcal {B}_{\mathcal {A}}) \cong \mathcal {B}_{\mathcal {A}'}$.
\end{proof}

\begin{lemma}\label{ba0}
The bundle $\Phi$  is simple and we have
$$h^1(\Phi^\vee )=h^1(\Phi ^\vee \otimes \Phi )=0.$$
\end{lemma}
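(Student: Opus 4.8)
The plan is to exploit the two short exact sequences already at hand, namely the Euler-type sequence $(\ref{eqaa0})$ for $\Phi$ and its dual, together with the identification $\Phi\cong\Bb_{\Aa}$ from Lemma \ref{ggg1} and the stability of $\Aa$. For simplicity, first I would note that $\Phi^\vee$ has a resolution dual to $(\ref{eqaa0})$, namely
\begin{equation*}
0 \to \Phi^\vee \to \Oo_Q^{\oplus 5} \to \Oo_Q(1) \to 0,
\end{equation*}
and that twisting $(\ref{eqaa0})$ by $\Oo_Q(-1)$ and taking cohomology computes $h^i(\Phi(-1))$; in particular $h^0(\Phi(-1))=0$ since $h^0(\Oo_Q^{\oplus 5}(-1))=0=h^1(\Oo_Q(-2))$, and $h^1(\Phi^\vee)=0$ follows from the displayed sequence since $h^1(\Oo_Q^{\oplus5})=0$ and $H^0(\Oo_Q^{\oplus5})\to H^0(\Oo_Q(1))$ is surjective. (Alternatively $h^1(\Phi^\vee)=h^1(\Bb_\Aa^\vee)$, and dualizing $(\ref{eqgg0})$ gives $h^1(\Bb_\Aa^\vee)=h^1(\Aa^\vee)-1=0$ by Lemma \ref{gg0}, once one checks $H^1(\Oo_Q)=0$ and the connecting map $H^0(\Oo_Q)\to H^1(\Aa^\vee)$ is an isomorphism, which it is because $(\ref{eqgg0})$ is non-split.)

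**Simplicity.** For $h^0(\Phi^\vee\otimes\Phi)=1$, I would tensor $(\ref{eqaa0})$ with $\Phi^\vee$ to get
\begin{equation*}
0 \to \Phi^\vee(-1) \to (\Phi^\vee)^{\oplus 5} \to \Phi^\vee\otimes\Phi \to 0.
\end{equation*}
Taking global sections, $h^0(\Phi^\vee\otimes\Phi)\le 5\,h^0(\Phi^\vee)+h^1(\Phi^\vee(-1))$. Now $h^0(\Phi^\vee)=0$ was shown in the proof of Lemma \ref{ggg1} via the sequence $0\to\Omega_{\PP^4}(-1)\to\Omega_{\PP^4}(1)\to\Phi^\vee\to0$ and $h^0(\Omega_{\PP^4}(1)\vert_Q)=0$. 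For $h^1(\Phi^\vee(-1))$, twist the displayed resolution of $\Phi^\vee$ by $\Oo_Q(-1)$: since $h^1(\Oo_Q(-1)^{\oplus5})=0$ and $h^2(\Phi^\vee(-1))$ injects into $h^2(\Oo_Q(-1)^{\oplus5})=0$, the relevant piece forces $h^1(\Phi^\vee(-1))$ to equal $h^0(\Oo_Q)=1$ modulo the cokernel of $H^0(\Oo_Q(-1)^{\oplus5})=0\to H^0(\Oo_Q)$, i.e. $h^1(\Phi^\vee(-1))=1$. So $h^0(\Phi^\vee\otimes\Phi)\le 1$, and it is exactly $1$ since $\id_\Phi$ is a global section; hence $\End(\Phi)=\KK$ and $\Phi$ is simple.

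**The vanishing $h^1(\Phi^\vee\otimes\Phi)=0$.** From the same sequence $0\to\Phi^\vee(-1)\to(\Phi^\vee)^{\oplus5}\to\Phi^\vee\otimes\Phi\to0$ we get $h^1(\Phi^\vee\otimes\Phi)\le 5\,h^1(\Phi^\vee)+h^2(\Phi^\vee(-1))$. We already have $h^1(\Phi^\vee)=0$, so it remains to show $h^2(\Phi^\vee(-1))=0$. Twisting the resolution $0\to\Phi^\vee\to\Oo_Q^{\oplus5}\to\Oo_Q(1)\to0$ by $\Oo_Q(-1)$ gives the piece $H^1(\Oo_Q)\to H^2(\Phi^\vee(-1))\to H^2(\Oo_Q(-1)^{\oplus5})$; both ends vanish, so $h^2(\Phi^\vee(-1))=0$. (One should also double-check via the other resolution that no term was missed: twisting $(\ref{eqaa0})$ by $\Phi^\vee$ only uses $h^i(\Phi^\vee)$ and $h^i(\Phi^\vee(-1))$, all of which are now known.) This completes the proof.

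**Main obstacle.** The one delicate point is the bookkeeping of the cohomology of $\Phi^\vee(-1)$ on $Q$: one must carefully track which Bott-type vanishings on $Q$ (as a hypersurface in $\PP^4$) are available, making sure the connecting maps are computed correctly rather than just bounded, since the value $h^1(\Phi^\vee(-1))=1$ is needed on the nose for simplicity. Everything else is a routine chase through the two Euler sequences and their twists.
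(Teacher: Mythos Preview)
Your proof is correct. The route differs from the paper's in two places, and the differences are worth noting.

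For simplicity, the paper works on the ambient $\PP^4$: it uses that $T\PP^4(-1)$ is simple and then shows that simplicity descends to $Q$ via the restriction sequence $0\to \End(T\PP^4(-1))(-2)\to \End(T\PP^4(-1))\to \Phi^\vee\otimes\Phi\to 0$, together with the Bott vanishings $h^1(\Omega_{\PP^4}(-1))=h^2(\Omega_{\PP^4}(-2))=0$. You instead stay entirely on $Q$, tensoring $(\ref{eqaa0})$ by $\Phi^\vee$ and computing $h^1(\Phi^\vee(-1))=1$ from the dual resolution. Your approach is a bit more self-contained; the paper's is shorter if one is willing to quote simplicity of $T\PP^4(-1)$.

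For $h^1(\Phi^\vee\otimes\Phi)=0$, the paper tensors the \emph{dual} of $(\ref{eqaa0})$ by $\Phi$, obtaining $0\to\Phi^\vee\otimes\Phi\to\Phi^{\oplus 5}\to\Phi(1)\to 0$, and then does an $h^0$ count using $h^0(\Phi)$ and $h^0(\Phi(1))$ (the values printed in the paper, $4$ and $19$, are in fact typos for $5$ and $24$, but the conclusion is unaffected). You tensor $(\ref{eqaa0})$ by $\Phi^\vee$ instead and reduce to the vanishing $h^2(\Phi^\vee(-1))=0$, which you read off directly from the twisted dual resolution. The two arguments are dual to one another; yours avoids the $h^0$ bookkeeping and the numerical slip in the paper, at the price of needing the exact value $h^1(\Phi^\vee(-1))=1$ earlier for the simplicity step.
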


\begin{proof}
Note that $T\PP^4(-1)$ is simple and $\Phi=T\PP^4(-1)|_Q$. Since we have $h^1(\Omega _{\PP ^4}(-1)) =h^2(\Omega _{\PP^4}(-2))=0$, so $h^1(T\PP^4 \otimes \Omega_{\PP^4}(-2))=0$ by the Euler sequence. It implies that $\Phi$ is simple.

Using the exact sequence
$$0 \to \Omega _{\PP^4}(-1) \to \Omega _{\PP^4}(1) \to \Omega _{\PP^4}(1)\vert _Q\to 0,$$
we obtain $h^1(\Omega_{\PP^4}(1)|_Q)=0$ and so $h^1(\Phi^{\vee})=0$.

For the last, let us twist (\ref{eqaa0}) by $\Phi $ to get the exact sequence
\begin{equation}\label{eqaa3}
0 \to \Phi ^\vee \otimes \Phi \to \Phi^{\oplus 5} \to \Phi (1)\to 0.
\end{equation}
From the sequence (\ref{eqaa0}), we get $h^1(\Phi )=0$ and so $h^0(\Phi^{\vee}\otimes \Phi)=1$ since $\Phi$ is simple. Again from the sequence (\ref{eqaa0}), we have $h^0(\Phi )=4$ and $h^0(\Phi (1))=19$. Now we can use (\ref{eqaa3}) to obtain $h^1(\Phi^{\vee}\otimes \Phi)=0$.
\end{proof}

There is no more non-trivial extension of $\Phi$ by $\Oo_Q$ due to the previous lemma.

Since $h^1(\Sigma (1)^\vee )=0$, Lemmas \ref{gg0} and \ref{ggg1} give the following result.
\begin{proposition}\label{5.6}
$\Ee$ is a globally generated vector bundle of rank $r$ on $Q$ with $c_1(\Ee )=1$ if and only if we have
$$\Ee \simeq \Oo_Q^{\oplus (r-k)} \oplus \Ff,$$
where $\Ff \in \{\Oo_Q(1), \Sigma , \Aa,  \Phi\} $ and $k=\mathrm{rank} (\Ff)$.

\end{proposition}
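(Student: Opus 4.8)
The plan is to prove both implications: the forward one by a short list check, and the reverse one by feeding the rank $3$ classification into the reduction sequence (\ref{eqa7}) and then invoking Lemmas \ref{uuu}, \ref{gg0} and \ref{ggg1}. For the ``if'' direction I would note that $\Oo_Q(1)$ is very ample, that $\Sigma$ is spanned by its four global sections, that $\Aa$ is a quotient of $\Oo_Q^{\oplus 4}$ via (\ref{ees1}), and that $\Phi$ is a quotient of $\Oo_Q^{\oplus 5}$ via (\ref{eqaa0}); since a direct sum of globally generated sheaves is globally generated, every bundle of the form $\Oo_Q^{\oplus(r-k)}\oplus\Ff$ with $\Ff\in\{\Oo_Q(1),\Sigma,\Aa,\Phi\}$ and $k=\mathrm{rank}(\Ff)$ is globally generated.

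For the ``only if'' direction I would first dispose of the small ranks: for $r\le 2$ the classification of globally generated bundles of rank $\le 2$ on $Q$ with $c_1=1$ from \cite{BHM} gives exactly $\Oo_Q(1)$, $\Oo_Q\oplus\Oo_Q(1)$ and $\Sigma$, while the case $r=3$ is the rank $3$ classification with $c_1=1$ already obtained above. So I would assume $r\ge 4$ and apply (\ref{eqa7}): there is an exact sequence $0\to\Oo_Q^{\oplus(r-3)}\to\Ee\to\Ff\to 0$ with $\Ff$ a globally generated rank $3$ bundle on $Q$ and $c_1(\Ff)=1$, hence $\Ff$ is one of $\Oo_Q^{\oplus 2}\oplus\Oo_Q(1)$, $\Oo_Q\oplus\Sigma$, $\Aa$. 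If $\Ff=\Oo_Q^{\oplus 2}\oplus\Oo_Q(1)$ or $\Ff=\Oo_Q\oplus\Sigma$, then $\alpha(\Ff)=h^1(\Ff^\vee)=0$ (using $h^1(\Oo_Q)=h^1(\Oo_Q(-1))=0$ and $h^1(\Sigma^\vee)=0$, as $\Sigma$ is ACM), so (\ref{eqa7}) splits by Lemma \ref{uuu}(ii) and $\Ee\cong\Oo_Q^{\oplus(r-3)}\oplus\Ff$, which is $\Oo_Q^{\oplus(r-1)}\oplus\Oo_Q(1)$ or $\Oo_Q^{\oplus(r-2)}\oplus\Sigma$ respectively.

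It remains to treat $\Ff=\Aa$, which is where the only real subtlety sits. By Lemma \ref{gg0} we have $\alpha(\Aa)=h^1(\Aa^\vee)=1$, so the extension (\ref{eqa7}) is classified by an $(r-3)$-tuple $(e_1,\dots,e_{r-3})$ in the one-dimensional space $H^1(\Aa^\vee)$, which spans a subspace of dimension $0$ or $1$. If the span is $0$ the extension is trivial and $\Ee\cong\Oo_Q^{\oplus(r-3)}\oplus\Aa$. If the span is one-dimensional, I would argue exactly as in the proof of Lemma \ref{uuu}(i): after a change of basis of $\Oo_Q^{\oplus(r-3)}$ we may assume $e_1\ne 0$ and $e_2=\cdots=e_{r-3}=0$, so the vanishing classes split off $\Oo_Q^{\oplus(r-4)}$ and the complementary rank $4$ factor is a non-split extension of $\Aa$ by $\Oo_Q$, necessarily the unique such bundle $\Bb_{\Aa}$, which is isomorphic to $\Phi$ by Lemma \ref{ggg1}; hence $\Ee\cong\Oo_Q^{\oplus(r-4)}\oplus\Phi$. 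In every case $\Ee$ has the asserted shape. The step needing genuine care, rather than bookkeeping, is precisely this last reduction in the $\Ff=\Aa$ case --- showing that a one-dimensional span of extension classes forces $\Ee$ to be $\Phi$ up to trivial summands --- and it is handled by the basis manipulation of Lemma \ref{uuu}(i) together with the uniqueness statements in Lemmas \ref{gg0} and \ref{ggg1}.
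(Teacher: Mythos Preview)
Your proposal is correct and follows essentially the same route as the paper: reduce via the sequence (\ref{eqa7}) to the rank $3$ classification, observe that $h^1(\Ff^\vee)=0$ for $\Ff\in\{\Oo_Q^{\oplus 2}\oplus\Oo_Q(1),\,\Oo_Q\oplus\Sigma\}$ so those extensions split, and in the remaining case $\Ff=\Aa$ use $h^1(\Aa^\vee)=1$ together with the basis change of Lemma~\ref{uuu}(i) and the identification $\Bb_{\Aa}\cong\Phi$ from Lemma~\ref{ggg1}. One cosmetic point: when $\alpha(\Ff)=0$ you do not really need to invoke Lemma~\ref{uuu}(ii); the vanishing $\mathrm{Ext}^1(\Ff,\Oo_Q^{\oplus(r-3)})\cong H^1(\Ff^\vee)^{\oplus(r-3)}=0$ already forces (\ref{eqa7}) to split outright.
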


Now let us assume that $\Ee$ is a globally generated vector bundle of rank $r\ge 4$ on $Q$ with $c_1(\Ee )=2$. For any $\Ee$ and $\Ff$ fitting into the sequence (\ref{eqa7}),
$\Ee$ is spanned if and only if $\Ff$ is spanned since $h^1(\Oo_Q)=0$. Since $h^1(\Oo _Q(-1))=0$, we have $h^0(\Ee (-1)) = h^0(\Ff (-1))$.

The globally generated vector bundle $\Ff$ of rank $3$ with $c_1(\Ff)=2$ and $h^0(\Ff (-1)) >0$ is classified in Proposition \ref{prop2}. Thus we can describe the possible $\Ee$ with $H^0(\Ee(-1))\not=0$.

When $\Ff$ is isomorphic to one of the first three vector bundles in Proposition \ref{prop2}, i.e. either $\Oo_Q^{\oplus 2} \oplus \Oo_Q(2)$, $\Oo_Q\oplus \Oo_Q(1)^{\oplus 2}$ or $\Oo_Q(1)\oplus \Sigma$, then we have $h^1(\Ff^{\vee})=0$ and thus the sequence (\ref{eqa7}) splits.

\begin{proposition}\label{c2}
Let $\Ee$ be a globally generated vector bundle of rank $r>3$ on $Q$ with $c_1=2$ such that $H^0(\Ee(-1))\not= 0$. Then we have
$$\Ee\simeq \Oo_Q^{\oplus (r-k)}\oplus \Ff,$$
 where $\Ff$ is a vector bundle of rank $k$ in the set
$$\{ \Oo_Q(2), \Oo_Q(1)^{\oplus 2} , \Oo_Q(1)\oplus \Sigma, \Oo_Q(1)\oplus \Aa, \Oo_Q(1)\oplus \Phi, \Ee_P\}.$$
Here, $\Ee_P$ is a vector bundle of rank $4$ uniquely determined by a point $P\in Q$ in the sequence (\ref{Ep}).
 \end{proposition}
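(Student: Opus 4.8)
The plan is to analyze the sequence \eqref{eqa7} for each of the four possibilities for the rank-$3$ bundle $\Ff$ from Proposition \ref{prop2}, exactly as in the $c_1=1$ case. The mechanism is always the same: if $h^1(\Ff^\vee)=0$ then the extension \eqref{eqa7} splits and $\Ee\cong\Oo_Q^{\oplus(r-3)}\oplus\Ff$, which then gets reabsorbed (by choosing a maximal trivial summand of $\Ff$, using Lemma \ref{uuu}) into a statement of the form $\Ee\cong\Oo_Q^{\oplus(r-k)}\oplus\Gg$ with $\Gg$ having no trivial factor. For the first three bundles in Proposition \ref{prop2}, namely $\Oo_Q^{\oplus2}\oplus\Oo_Q(2)$, $\Oo_Q\oplus\Oo_Q(1)^{\oplus2}$, and $\Oo_Q(1)\oplus\Sigma$, the dual is a sum of line bundles of nonnegative twist together with $\Sigma^\vee\cong\Sigma(-1)$, and $h^1(\Oo_Q(t))=0$ for all $t$ while $h^1(\Sigma(-1))=0$; hence $h^1(\Ff^\vee)=0$ and we land in the list with $\Gg\in\{\Oo_Q(2),\ \Oo_Q(1)^{\oplus2},\ \Oo_Q(1)\oplus\Sigma\}$. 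These three cases are routine and I would dispatch them in one paragraph.

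The remaining case is $\Ff$ the rank-$3$ bundle of type (4) in Proposition \ref{prop2}, fitting in $0\to\Oo_Q(-1)\to\Oo_Q^{\oplus3}\oplus\Oo_Q(1)\to\Ff\to0$, whose associated curve is a twisted cubic plus... no: here $C$ is a smooth curve of degree $4$ on a quadric surface. First I would compute $h^1(\Ff^\vee)=h^2(\Ff(-3))$. From the bound \eqref{eqd3} with $c_1=2$, $d=4$, $g=1$ we get $1\le\alpha(\Ff)\le3$, so $\Ff$ is genuinely non-ACM and the extensions \eqref{eqa7} do not all split. By Lemma \ref{uuu}(ii), a general extension by $\Oo_Q^{\oplus(r-3)}$ has no trivial factor precisely when $r-3\le\alpha(\Ff)$; combined with the fact that $\Ff$ itself (type (4)) has no trivial summand, this should force $\alpha(\Ff)$ to be small. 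I expect the right value is $\alpha(\Ff)=1$: indeed $h^2(\Ff(-3))$ can be read off from the resolution $0\to\Oo_Q(-1)\to\Oo_Q^{\oplus3}\oplus\Oo_Q(1)\to\Ff\to0$ twisted by $\Oo_Q(-3)$, using $h^3(\Oo_Q(-4))=h^0(\Oo_Q(1))=5$, $h^3(\Oo_Q(-3))=h^0(\Oo_Q)=1$, $h^3(\Oo_Q(-2))=0$, and the connecting maps. Granting $\alpha(\Ff)=1$, there is a unique non-split extension $0\to\Oo_Q\to\Ee_P\to\Ff\to0$, giving a rank-$4$ bundle $\Ee_P$ with no trivial factor, and any $\Ee$ of rank $r>4$ fitting in \eqref{eqa7} with $\Ff$ of type (4) has $\Oo_Q^{\oplus(r-4)}$ as a factor, hence $\Ee\cong\Oo_Q^{\oplus(r-4)}\oplus\Ee_P$. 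This is the content of the last line of the proposition, and it identifies $\Ee_P$ via the pushout sequence that I would label \eqref{Ep}.

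The main obstacle is pinning down $\alpha(\Ff)=1$ for the type-(4) bundle, and more precisely showing that $\Ee_P$ genuinely has no trivial direct factor — one cannot simply quote that $\Ff$ has none, since an extension can acquire or lose trivial summands. I would handle this by the argument in Lemma \ref{uuu}(i): a trivial factor of $\Ee_P$ would correspond to the single extension class being zero, i.e. to the split extension, which is excluded by construction. To be safe I would also record that $\Ee_P$ is globally generated: this is automatic from the converse part of the discussion after \eqref{eqa7}, since $h^1(\Oo_Q)=0$ and $\Ff$ is globally generated. Finally I would note the dependence on $P$: since the resolution of type (4) varies with the curve $C$ (a degree-$4$ elliptic curve on a quadric surface section), $\Ee_P$ is a whole family rather than a single bundle, which is why the statement phrases it as "uniquely determined by a point $P\in Q$" through the sequence \eqref{Ep}; the precise parametrization is not needed for the classification and I would not belabor it.
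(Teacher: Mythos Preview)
Your computation of $\alpha(\Ff)$ for the type-(4) bundle is wrong: from the resolution twisted by $-3$ one has
\[
0\to H^2(\Ff(-3))\to H^3(\Oo_Q(-4))\to H^3(\Oo_Q(-3))^{\oplus 3}\oplus H^3(\Oo_Q(-2))\to H^3(\Ff(-3))\to 0,
\]
and since $h^3(\Oo_Q(-4))=5$, $h^3(\Oo_Q(-3))=1$, $h^3(\Oo_Q(-2))=0$, while $h^3(\Ff(-3))=h^0(\Ff^\vee)=0$ (no trivial factor), you get $\alpha(\Ff)=5-3=2$, not $1$. With $\alpha(\Ff)=2$, Lemma \ref{uuu} only gives a trivial factor for $r\ge 6$, so you must analyze both $r=4$ and $r=5$, and in rank $4$ there is a $\PP^1$ of non-split extensions of $\Ff$ by $\Oo_Q$ rather than a single one. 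Your proposal therefore does not account for three of the six bundles in the list: $\Oo_Q(1)\oplus\Aa$, $\Oo_Q(1)\oplus\Phi$, and $\Ee_P$ are \emph{all} obtained from type-(4) $\Ff$'s, and your argument neither produces nor distinguishes them.

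More fundamentally, your description of $\Ee_P$ and of the parameter $P$ is incorrect. The point $P$ has nothing to do with the elliptic curve $C$; it arises from a completely different filtration. The paper uses $h^0(\Ee(-1))>0$ to write
\[
0\to\Oo_Q(1)\to\Ee\to\Gg\to 0
\]
with $\Gg$ a spanned torsion-free sheaf of rank $r-1$ with $c_1=1$. For $r=4$ one shows $\Gg$ sits in $0\to\Oo_Q(-1)\stackrel{\psi}{\to}\Oo_Q^{\oplus 4}\to\Gg\to 0$ where $\psi$ is given by four linear forms; $P\in\PP^4$ is their common zero. When $P\notin Q$ the sheaf $\Gg$ is locally free and equals $\Aa_P$, so $\Ee\cong\Oo_Q(1)\oplus\Aa$ (using $h^1(\Aa^\vee(1))=0$). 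When $P\in Q$ the sheaf $\Gg_P$ is not locally free at $P$; one computes $\dim\Ext^1(\Gg_P,\Oo_Q(1))=1$, and the unique non-split extension is the sequence \eqref{Ep} defining $\Ee_P$. The rank-$5$ case produces $\Oo_Q(1)\oplus\Phi$, and Lemma \ref{gg6} ($h^1(\Ee_P^\vee)=0$) handles $r\ge 5$. Your extension-by-trivial-bundles approach cannot see this structure, and in particular cannot explain why $\Ee_P$ is indexed by points of $Q$.
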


\begin{proof}
It is enough to consider the case when $\Ff$ is associated to a smooth elliptic curve $C$, the complete intersection of a hyperplane and a hypersurface of degree $2$ in $Q$. So it fits into an exact sequence
\begin{equation}\label{eqeeee1}
0\to \Oo_Q(-1) \to \Oo_Q^{\oplus 3}\oplus \Oo_Q(1) \to \Ff \to 0.
\end{equation}
Since $\Ff$ is globally generated and without trivial factors, we have
$h^3(\Ff (-3)) = h^0(\Ff^\vee )=0$. From (\ref{eqeeee1}) we get $h^1(\Ff ^\vee ) = h^2(\Ff(-3))
= 5-3$. Thus if $r\ge 6$ we see that $\Ee$ has $\Oo _Q^{\oplus (r-5)}$ as its direct factor. So it is enough to deal with the cases $r=4,5$. Since $h^0(\Ff ) =8$ and so we have $h^0(\Ee ) =r+5$.

\quad (a) Let us take any decomposable vector bundle $\Ee$ that fits into the sequence (\ref{eqa7}), if there is any. Since $h^0(\Ff (-1))
=1$, there is a decomposition $\Ee \cong \Vv\oplus \Ww$ with
$\Vv$ indecomposable and $h^0(\Ww(-1)) =0$. The classification of all spanned vector bundles with $c_1=1$
gives $\Vv \cong \Oo _Q(1)$. Hence $\Ww$ is a spanned vector bundle
of rank $r-1$ with $c_1(\Ww)=1$, $h^0(\Ww(-1))=0$ and $h^0(\Ww) = r$. Proposition \ref{5.6} gives
that either $\Ww\cong \mathcal {A}\oplus \Oo _Q^{\oplus (r-4)}$ or $r\ge 5$ and $\Ww\cong \Phi \oplus \Oo _Q^{\oplus (r-5)}$. We also know that $\mathcal {A}\oplus \Oo _Q(1)$
and $\Phi \oplus \Oo _Q(1)$ give $\Ff$ associated to smooth elliptic curves of degree 4.

\quad (b) Assume $r=4$. Since $h^0(\Ee (-1))>0$ and $h^0(\Ee (-2)) =0$, there is an exact sequence
\begin{equation}\label{eqgg1}
0 \to \Oo _Q(1) \to \Ee \to \mathcal {G} \to 0
\end{equation}
with $\mathcal {G}$ a rank $3$ spanned torsion free sheaf with $c_1(\mathcal {G})=1$ and $h^0(\mathcal {G}) =4$. Since $h^0(\mathcal {G})=4$ and
$\mathcal {G}$ is spanned, we have an exact sequence
\begin{equation}\label{eqgg2}
0 \to \mathcal {L} \stackrel{\psi}{\to} \Oo _Q^{\oplus 4} \to \mathcal {G} \to 0
\end{equation}
Since $\mathcal {G}$ is torsion free, $\mathcal {L}$ is reflexive (\cite{Hartshorne1}, Proposition 1.1). Since $Q$ is a smooth threefold and $\mathcal {L}$ has rank $1$, $\mathcal {L}$
is a line bundle (\cite{Hartshorne1}, Proposition 1.9). Since $h^0(\Oo _Q^{\oplus 4})=h^0( \mathcal {G})$, we have $\mathcal {L} \cong \Oo _Q(e)$ with $e<0$. Since $\mathcal {G}$ is torsion free, it is locally free outside a curve. Taking the Segre classes in (\ref{eqgg1})
and (\ref{eqgg2}) (or, equivalently, restricting to a general line) we get $e=-1$.  Hence $\psi $ is given by $4$ linear forms $L_1,\dots ,L_4\in H^0(\Oo _Q(1))$ with $L_i\ne 0$ for some
$i$.  First assume that the forms $L_1,\dots ,L_4$ are not linearly independent. Up to an automorphism of $\Oo _Q^{\oplus 4}$ we may assume $L_4=0$. From (\ref{eqgg2})
we get that $\Oo _Q$ is a direct factor of $\mathcal {G}$. Since $\Ee$ is globally generated, we get that $\Oo_Q$ is a direct factor of $\Ee$. Hence $\Ee \cong \Oo _Q\oplus \Ff$ with
$\Ff$ one of the bundles described in Proposition \ref{prop2}.

Now assume that $L_1,\dots ,L_4$ are linearly independent. Hence the zero-locus of these forms in $\mathbb {P}^4$ is a single point, $P$. First assume
$P\in Q$. Notice that $L_1,\dots ,L_4$ are uniquely determined by $P$, up to an automorphism of $\Oo _Q^{\oplus 4}$.
Hence for a given $P$, there is a unique sheaf $\mathcal {G}$ and we call it $\mathcal {G}_P$. This sheaf is locally free outside $P$, while
not locally free at $P$, but it has homological dimension $2$, because it fits in an exact sequence
\begin{equation}\label{eqgg3}
0\to \Oo _Q(-1)\to \Oo _Q^{\oplus 4} \to \mathcal {G}_P\to 0
\end{equation}

From (\ref{eqgg3}) we get that $h^2(\Gg _P(-2)) =1$. Take a general hyperplane section $Q'$ of $Q$. In particular we can assume $P\notin Q'$. From (\ref{eqgg3}) we get
an exact sequence on $Q'$:
$$0 \to \Oo _{Q'}(-1) \to \Oo _{Q'}^{\oplus 4} \to \Gg _P\vert_{ Q'}\to 0$$
Taking duals we
get
$$0 \to (\Gg _P\vert _{Q'})^\vee (t) \to \Oo _{Q'}(t)^{\oplus 4} \to \Oo _{Q'}(t+1)\to 0$$
Since $L_1,\dots ,L_4$ generate the homogeneous ideal of $P$ in $\PP^4$, we get the surjectivity
of the map
$$H^0(Q', \Oo _{Q'}(t)^{\oplus 4}) \to H^0(Q',\Oo _{Q'}(t+1))$$
for every $t\ge 0$. Hence
$h^1(Q',(\Gg _P\vert_{ Q'})^\vee (t))=0$ for every $t\ge 0$ and it implies that $h^1(Q',\Gg _P\vert _{Q'} (t)) =0$ for all $t \le -2$ by the Serre duality. From the exact sequence
$$0 \to \Gg_P(t-1) \to \Gg _P(t) \to \Gg _P(t)\vert _{Q'}\to 0$$
we get that the sequence $\{h^2(\Gg _P(t))\}_{t}$ is non-decreasing for $t\le -2$. By Remark 2.5.1 in \cite{Hartshorne1},
for $s\gg 0$, we have $h^2(\Gg _P(-s)) = h^0(\mathcal{E} xt^1(\Gg _P,\omega _Q)) >0$
(since $\Gg _P$ is not locally free). Hence $h^2(\Gg _P(t))=1$ for any $t\leq -2$. In particular $h^2(\Gg _P(-4))=\dim \Ext^1(\Gg_P,\Oo_Q(1))=1$.
Hence for a fixed $P$ there is, up to isomorphism,
a unique sheaf fitting in a non-trivial extension (\ref{Ep}). This sheaf is locally free and we call it $\Ee_P$.
\begin{equation}\label{Ep}
0\to \Oo_Q(1) \to \Ee_P \to \Gg_P \to 0.
\end{equation}

 Let $L\subset Q$ be a line not containing $P$. Since $\mathcal {G}_P\vert _L$ is spanned, it has splitting type $(1,0,0)$. Hence (\ref{eqgg1}) gives that $\Ee _P\vert _L$ has splitting type $(1,1,0,0)$. Let $D\subset Q$
be a line containing $P$. Since $\mathcal {G}_P$ is not locally free at $P$ and $D$ is a smooth curve, $\mathcal {G}_P\vert _D$ is a direct sum of a rank $3$ vector bundle
$A$ and a torsion sheaf $\tau$ supported by $P$. Since $\mathcal {G}_P$ is not locally free at $P$, the vector space $\mathcal {G}_P\vert _{\{P\}}$ has dimension
$\ge 4$. Hence $\tau \ne \emptyset$. Restrict (\ref{eqgg1}) to $D$ we get that the inclusion $\Oo_Q(1)\vert _D \cong \Oo _D(1) \to \Ee _P\vert_D$ has cokernel with torsion.
Since $\Ee_P\vert _L$ is spanned and with degree $2$, we get $\Ee _P\vert _D$ has splitting type $(2,0,0,0)$. Hence $\Ee _P$ and $\Ee _O$ are not isomorphic
if $O\ne P$, but $g^\ast (\Ee _P) \cong \Ee _O$ for any $g\in \mbox{Aut}(Q)$ such that $g(P)=O$.

Now assume $P\notin Q$. Hence $\mathcal {G}$ is a locally free sheaf with rank $3$ spanned by $4$ sections. The universal property
of $T\PP^3(-1)$ gives the existence of a morphism $\phi :Q\to \PP^3$ such that $\mathcal {G} \cong \phi ^\ast (T\PP^3(-1))$. Since
$c_1(\mathcal {G})=1$, we get $\mathcal {G} \cong \mathcal {A}$. In Lemma \ref{a9}, we will prove that $h^1(\Aa^{\vee}(1))=0$ and it gives $\Ee \cong \mathcal {A}\oplus \Oo _Q(1)$.

By Lemma \ref{gg6}, the vector bundles $\Ee_P$ give only $ \Oo_Q^{\oplus (r-4)}\oplus \Ee_P$ for $r\ge 5$.
\end{proof}

\begin{lemma}\label{gg6}
For any $P\in Q$ we have $h^1(\Ee _P^\vee )=0$
\end{lemma}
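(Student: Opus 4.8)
The natural plan is to compute $h^1(\Ee_P^\vee)$ via Serre duality together with the defining sequence (\ref{Ep}). Since $\omega_Q\cong\Oo_Q(-3)$ on the quadric threefold, $h^1(\Ee_P^\vee)=h^2(\Ee_P(-3))$. Twisting (\ref{Ep}) by $\Oo_Q(-3)$ gives
$$0 \to \Oo_Q(-2) \to \Ee_P(-3) \to \Gg_P(-3) \to 0,$$
and since $H^2(\Oo_Q(-2))=H^3(\Oo_Q(-2))=0$ by the standard cohomology of line bundles on $Q$, the associated long exact sequence yields $h^2(\Ee_P(-3))=h^2(\Gg_P(-3))$. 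So the first step reduces the whole statement to a single group, $H^2(\Gg_P(-3))$, and the behaviour of $h^2(\Gg_P(t))$ for $t\le -2$ was already analysed in the proof of Proposition \ref{c2} --- from the resolution (\ref{eqgg3}), a restriction-to-a-general-hyperplane argument, and the identification of $\mathcal{E}xt^1(\Gg_P,\omega_Q)$ as a length-one skyscraper supported at $P$. Thus I would simply quote that analysis.

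As an equivalent and more transparent route, which I would also run as a cross-check, one can dualize (\ref{Ep}) directly. Because $\Gg_P$ is not locally free one must use $\mathcal{E}xt$ sheaves: applying $\mathcal{H}om(-,\Oo_Q)$ and using that $\Oo_Q(1)$ and $\Ee_P$ are locally free gives
$$0 \to \Gg_P^\vee \to \Ee_P^\vee \to \Oo_Q(-1) \to[\delta] \mathcal{E}xt^1(\Gg_P,\Oo_Q) \to \mathcal{E}xt^1(\Ee_P,\Oo_Q)=0.$$
Dualizing the two-term resolution (\ref{eqgg3}) presents $\mathcal{E}xt^1(\Gg_P,\Oo_Q)$ as the cokernel of $(L_1,\dots,L_4)\colon\Oo_Q^{\oplus 4}\to\Oo_Q(1)$, hence as the length-one skyscraper $\Oo_P$, since the forms $L_i$ generate the homogeneous ideal of $P\in Q$; as $\Ee_P$ is locally free and (\ref{Ep}) is non-split, $\delta$ must be the full surjection $\Oo_Q(-1)\twoheadrightarrow\Oo_P$, with kernel $\Ii_{P,Q}(-1)$. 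One is then left to compute cohomology in $0\to\Gg_P^\vee\to\Ee_P^\vee\to\Ii_{P,Q}(-1)\to 0$: $H^1(\Gg_P^\vee)$ and $H^2(\Gg_P^\vee)$ are read off the dual resolution $0\to\Gg_P^\vee\to\Oo_Q^{\oplus 4}\to\Ii_{P,Q}(1)\to 0$ (using that the $L_i$ span the $4$-dimensional $H^0(\Ii_{P,Q}(1))$ and $h^i(\Oo_Q(1))=0$ for $i>0$), and $H^1(\Ii_{P,Q}(-1))$ from $0\to\Ii_{P,Q}(-1)\to\Oo_Q(-1)\to\Oo_P\to 0$ and $h^0(\Oo_Q(-1))=h^1(\Oo_Q(-1))=0$.

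The step I expect to be decisive is the local analysis at $P$: concretely, the exact size of the skyscraper contribution --- whether $\delta$ surjects onto all of $\Oo_P$, equivalently whether $h^2(\Gg_P(-3))$ survives into $h^2(\Ee_P(-3))$, equivalently whether the non-trivial class of (\ref{Ep}) fully resolves the singularity of $\Gg_P$ at $P$. This is the entire content of the lemma; once it is settled, everything else --- the vanishings $H^2(\Oo_Q(-2))=H^3(\Oo_Q(-2))=0$, $H^1(\Gg_P^\vee)=H^2(\Gg_P^\vee)=0$, and the computation of $H^1(\Ii_{P,Q}(\pm 1))$ --- is routine long-exact-sequence bookkeeping with the two resolutions of $\Gg_P$ and with line bundles on a quadric threefold.
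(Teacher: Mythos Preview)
Your plan is sound and in fact more careful than the paper's own argument, but if you actually carry the computations through you will find that the lemma as stated is \emph{false}: $h^1(\Ee_P^\vee)=1$, not $0$.

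Along your first route, Serre duality with $\omega_Q\cong\Oo_Q(-3)$ correctly gives $h^1(\Ee_P^\vee)=h^2(\Ee_P(-3))$, and twisting (\ref{Ep}) by $-3$ together with $h^2(\Oo_Q(-2))=h^3(\Oo_Q(-2))=0$ yields $h^2(\Ee_P(-3))=h^2(\Gg_P(-3))$. But the very analysis from Proposition~\ref{c2} that you propose to quote shows $h^2(\Gg_P(t))=1$ for all $t\le -2$; in particular $h^2(\Gg_P(-3))=1$. Along your second route, the sequence $0\to\Gg_P^\vee\to\Ee_P^\vee\to\Ii_{P,Q}(-1)\to 0$ is correct, and the dual of (\ref{eqgg3}) does give $h^1(\Gg_P^\vee)=h^2(\Gg_P^\vee)=0$; hence $h^1(\Ee_P^\vee)=h^1(\Ii_{P,Q}(-1))$. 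But the exact sequence you wrote down, $0\to\Ii_{P,Q}(-1)\to\Oo_Q(-1)\to\Oo_P\to 0$, together with $h^0(\Oo_Q(-1))=h^1(\Oo_Q(-1))=0$, forces $h^1(\Ii_{P,Q}(-1))=h^0(\Oo_P)=1$. Equivalently, in the global $\Ext$ long exact sequence for (\ref{Ep}) one has $\Hom(\Oo_Q(1),\Oo_Q)=\Ext^1(\Oo_Q(1),\Oo_Q)=0$, so $\Ext^1(\Ee_P,\Oo_Q)\cong\Ext^1(\Gg_P,\Oo_Q)\cong k$.

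The paper's proof asserts that Serre duality yields $h^1(\Ee_P^\vee)=h^2(\Ee_P(-1))$; this is the error (the twist should be $-3$, not $-1$). The subsequent computation $h^2(\Ee_P(-1))=0$ is correct but not the quantity in question. So the ``decisive local step'' you flagged is indeed decisive --- and it decides against the lemma. There is no proof to write; rather, one should expect a nonsplit extension $0\to\Oo_Q\to\Ee'\to\Ee_P\to 0$ giving a rank-$5$ spanned bundle with no trivial factor, and the last sentence of Proposition~\ref{c2} (which invokes this lemma) needs to be revisited accordingly.
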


\begin{proof}
Serre duality gives $h^1(\Ee _P^\vee )=h^2(\Ee _P(-1))$. Since $\mathcal {L} \cong \Oo _Q(-1)$, (\ref{eqgg2}) gives $h^2(\mathcal {G}_P(-1))=0$.
Hence (\ref{eqgg1}) gives $h^2(\Ee _P(-1))=0$.
\end{proof}

\begin{remark}
From (\ref{eqgg2}) we get $h^1(\Gg _P(t))=0$ for all $t\in \mathbb {Z}$. Hence (\ref{eqgg1}) gives $h^1(\Ee _P(t)) =0$ for all $t\in \mathbb {Z}$. We
also proved that $\Ee _P$ is not uniform: for each line $L\subset Q$ with $P\in L$ (resp. $P\notin L$) the bundle $\Ee _P\vert _L$ has splitting type
$(2,0,0,0)$ (resp. $(1,1,0,0)$). Hence in the list of Proposition \ref{c2} the bundles  $\Oo_Q^{\oplus (r-5)}\oplus \Ee_P$, $P\in Q$, are the only
non-uniform ones. \end{remark}
\section{Indecomposability}
Now notice from the computation of $c_3$ for globally generated vector bundles of rank 3 on $Q$ with $c_1=2$, $c_2\geq 4$, we  have the following statement:

\begin{proposition}\label{c3}
There exists no globally generated vector bundle of rank $r>3$ on $Q$ with $c_1=2$ if either $c_2=7$ or $c_2\ge 9$. The pairs $(c_2, c_3)$ with which there exist globally generated vector bundles of rank $r>3$ on $Q$ with the Chern classes $(c_1=2, c_2\ge 4, c_3)$, are as follows:
$$\{ (4,0),(4,2),(4,4),(5,5),(6,8),(8,16)\}.$$
\end{proposition}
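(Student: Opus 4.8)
The plan is to reduce Proposition \ref{c3} to the rank-$3$ classification already obtained, since the sequence (\ref{eqa7}) transports Chern classes and global generation between ranks. First I would recall that every globally generated $\Ee$ of rank $r>3$ on $Q$ fits into
\begin{equation*}
0 \to \Oo_Q^{\oplus (r-3)} \to \Ee \to \Ff \to 0
\end{equation*}
with $\Ff$ a globally generated rank-$3$ bundle satisfying $c_i(\Ff)=c_i(\Ee)$ for $i=1,2,3$; conversely any extension of a globally generated rank-$3$ bundle by copies of $\Oo_Q$ is again globally generated. Hence the set of triples $(c_1,c_2,c_3)$ realized in rank $>3$ is contained in — and a priori equal to — the set realized in rank $3$.

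Next I would invoke the rank-$3$ results: for $c_1=2$ with $H^0(\Ee(-1))=0$ the admissible $(c_2,c_3)$ are exactly $\{(4,0),(4,2),(5,5),(6,8),(8,16)\}$ (the last Proposition of Section 3), and for $c_1=2$ with $H^0(\Ee(-1))\ne 0$ the bundle is one of those in Proposition \ref{prop2}; the only one of the latter with $c_2\ge 4$ comes from a smooth elliptic curve of degree $4$, which by Remark \ref{pa} has $(c_2,c_3)=(4,4)$. So in rank $3$ (and therefore in all ranks $>3$) the pairs $(c_2,c_3)$ with $c_2\ge 4$ are precisely $\{(4,0),(4,2),(4,4),(5,5),(6,8),(8,16)\}$; in particular no bundle of rank $>3$ has $c_2=7$ (since no rank-$3$ one does) and none has $c_2\ge 9$ (since for $c_1=2$ the associated curve $C$ lies on a complete intersection of two quadrics in $Q$, forcing $\deg C=c_2\le 8$). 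For each listed pair one must still check the converse, i.e.\ that an $\Ee$ of rank $r>3$ actually exists: this is immediate by taking $\Ee=\Oo_Q^{\oplus (r-3)}\oplus \Ff$ for a rank-$3$ bundle $\Ff$ realizing that pair, which exists by the rank-$3$ propositions.

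The main obstacle, if any, is bookkeeping rather than a genuine difficulty: one must make sure that the case $H^0(\Ee(-1))\ne 0$ is not overlooked when transferring to higher rank, but here $h^0(\Ee(-1))=h^0(\Ff(-1))$ because $h^1(\Oo_Q(-1))=0$, so the dichotomy for $\Ee$ matches that for $\Ff$, and the only contribution with $c_2\ge 4$ is the elliptic-quartic case giving $(4,4)$. Combining the two cases yields exactly the six pairs asserted, and the non-existence statements for $c_2=7$ and $c_2\ge 9$ follow directly. This completes the proof.
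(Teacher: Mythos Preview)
Your proposal is correct and follows essentially the same approach as the paper: the paper simply remarks that the proposition follows ``from the computation of $c_3$ for globally generated vector bundles of rank 3 on $Q$ with $c_1=2$, $c_2\geq 4$,'' i.e.\ it reduces to the rank-$3$ classification via the sequence (\ref{eqa7}), exactly as you do. Your write-up is in fact more explicit than the paper's, particularly in separating the $H^0(\Ee(-1))\ne 0$ case to recover the pair $(4,4)$ and in noting the converse direction via the split extension $\Oo_Q^{\oplus(r-3)}\oplus\Ff$.
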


Let us assume that $H^0(\Ee(-1))=0$, from which we can exclude the case $(c_2,c_3)=(4,4)$. Since the vector bundle $\Ff$ in the sequence (\ref{eqa7}) fits into the sequence (\ref{eqa2}), so we have
$$h^1(\Ff^{\vee})=h^2(\Ff(-3))= h^2(\Ii_C(-1))-h^3(\Oo_Q(-3)^{\oplus 2})=h^2(\Ii_C(-1))-2.$$
From the sequence
$$0\to \Ii_C(-1) \to \Oo_Q(-1) \to \Oo_C(-1) \to 0,$$
we have $h^2(\Ii_C(-1))=h^1(\Oo_C(-1))=h^0(\Oo_C(1)\otimes \omega_C)$ and so
$$h^1(\Ff^{\vee})= \left\{
                                           \begin{array}{llll}
                                             0, & \hbox{if $C$ is a disjoint union of two conics;}\\
                                             1, & \hbox{if $C$ is a normal rational curve of degree $4$;} \\                                          3, & \hbox{if $C$ is an elliptic curve of degree $5$;}\\
                                             5, & \hbox{if $C$ is a curve of genus 2 and degree $6$;}\\
                                             10, & \hbox{if $C$ is a curve of genus 5 and degree $8$.}
                                           \end{array}
                                         \right.$$

Let us fix any indecomposable globally generated vector bundle $\Ff$ of rank 3 on $Q$ with $c_1(\Ff)=2$
and $c_2(\Ff) =c_2$. Let $\Ee$ be the general extension (\ref{eqa7}).
In each case we computed $h^1(\Ff^\vee ) = h^2(\Ff(-3))$ and it is not zero.
Hence $\Ee$ has no trivial factor.

Assume $\Ee \cong \Ee_1\oplus \Ee_2$ with neither $\Ee_1$ nor
$\Ee_2$ trivial. Since $\Ee$ is spanned, each $\Ee_i$ is spanned. Since $c_1(\Ee_1)+c_1(\Ee_2)=2$
and neither $\Ee_1$ nor $\Ee_2$ is trivial, we have $c_1(\Ee_1)=c_1(\Ee_2)=1$. Since $\Ee$ has
no trivial factor, neither $\Ee_1$ nor $\Ee_2$ has a trivial factor. Both $\Ee_1$ and $\Ee_2$ should
be obtained from Proposition \ref{5.6} in which we avoid the ones with trivial factors.
Thus $\Ee$ is isomorphic to one of the followings:

\begin{equation}\label{dec}
\left\{
\begin{array}{ll}
 \Oo_Q(1)\oplus\Aa, \Oo_Q(1)\oplus \Phi, \Sigma\oplus \Sigma , &\hbox{ when $c_2=4$ } \\
\Sigma\oplus\Aa, \Sigma\oplus \Phi , &\hbox{ when $c_2=5$}\\
\Aa_O \oplus\Aa_P, \Aa\oplus \Phi, \Phi \oplus \Phi , &\hbox{ when $c_2=6$}
 \end{array}
 \right.
 \end{equation}

Now we compute the dimensions of all possible cohomology groups
$$h^1(\mathcal{H}om (A,B)) \text{ with } A,B \in \mathfrak{F}=\{\Oo_Q(1),\Sigma, \mathcal {A}, \Phi \}.$$

The projection formula gives
\begin{align*}
& h^1(\mathcal {A}(t)) = h^1(T\PP^3(t-1)) + h^1(T\PP^3(t-2))\\
 &h^1(\mathcal {A}^\vee (t)) = h^1(\Omega _{\PP^3}(t+1)) +
h^1(\Omega _{\PP^3}(t)).
\end{align*}

\begin{lemma}\label{a9}
We have
\begin{enumerate}
\item $h^1(\mathcal {A}(-1)) = h^1(\mathcal {A}^\vee (1))=0$
\item $h^1(\Phi(-1))=h^1(\Phi^{\vee}(1))=0$.
\end{enumerate}
\end{lemma}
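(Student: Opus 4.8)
The plan is to treat the two bundles by different methods. For $\mathcal{A}$ I would transport the computation to $\mathbb{P}^3$ through the double cover $\phi_P$, and for $\Phi$ I would argue directly on $Q$, using the resolution (\ref{eqaa0}), its dual, and the fact that $Q$ is arithmetically Cohen--Macaulay, so that $h^1(\mathcal{O}_Q(k)) = h^2(\mathcal{O}_Q(k)) = 0$ for every integer $k$.

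For part (i), I would simply specialize the two projection-formula identities displayed immediately before the statement. Setting $t = -1$ yields $h^1(\mathcal{A}(-1)) = h^1(T\mathbb{P}^3(-2)) + h^1(T\mathbb{P}^3(-3))$, and setting $t = 1$ yields $h^1(\mathcal{A}^\vee(1)) = h^1(\Omega_{\mathbb{P}^3}(2)) + h^1(\Omega_{\mathbb{P}^3}(1))$. For the first, note that $\mathcal{O}_{\mathbb{P}^3}(-1)$, $\mathcal{O}_{\mathbb{P}^3}(-2)$ and $\mathcal{O}_{\mathbb{P}^3}(-3)$ are all acyclic on $\mathbb{P}^3$; twisting the Euler sequence $0 \to \mathcal{O}_{\mathbb{P}^3} \to \mathcal{O}_{\mathbb{P}^3}(1)^{\oplus 4} \to T\mathbb{P}^3 \to 0$ by $\mathcal{O}(-2)$ and by $\mathcal{O}(-3)$ then presents $T\mathbb{P}^3(-2)$ and $T\mathbb{P}^3(-3)$ as extensions of acyclic sheaves, so in particular both have vanishing $H^1$. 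For the second, the dual Euler sequence $0 \to \Omega_{\mathbb{P}^3}(k) \to \mathcal{O}_{\mathbb{P}^3}(k-1)^{\oplus 4} \to \mathcal{O}_{\mathbb{P}^3}(k) \to 0$ with $k = 1$ and $k = 2$ identifies $h^1(\Omega_{\mathbb{P}^3}(k))$ with the cokernel of the surjective multiplication-by-coordinates map $H^0(\mathcal{O}_{\mathbb{P}^3}(k-1))^{\oplus 4} \to H^0(\mathcal{O}_{\mathbb{P}^3}(k))$, which is zero; alternatively one invokes Bott's vanishing $h^1(\Omega^1_{\mathbb{P}^3}(k)) = 0$ for $k \neq 0$.

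For part (ii), to show $h^1(\Phi(-1)) = 0$ I would twist (\ref{eqaa0}) by $\mathcal{O}_Q(-1)$, obtaining $0 \to \mathcal{O}_Q(-2) \to \mathcal{O}_Q(-1)^{\oplus 5} \to \Phi(-1) \to 0$; the relevant piece of the long exact cohomology sequence is $H^1(\mathcal{O}_Q(-1)^{\oplus 5}) \to H^1(\Phi(-1)) \to H^2(\mathcal{O}_Q(-2))$, whose outer terms vanish because $Q$ is arithmetically Cohen--Macaulay. To show $h^1(\Phi^\vee(1)) = 0$ I would dualize (\ref{eqaa0}); since $\Phi$ is locally free the $\mathcal{E}xt^1$ term drops out and we get $0 \to \Phi^\vee \to \mathcal{O}_Q^{\oplus 5} \to \mathcal{O}_Q(1) \to 0$, where the last map is evaluation against the five coordinates of $\mathbb{P}^4$ restricted to $Q$. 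Twisting by $\mathcal{O}_Q(1)$ and using $h^1(\mathcal{O}_Q(1)) = 0$, one reads off that $h^1(\Phi^\vee(1))$ equals the cokernel of the multiplication map $H^0(\mathcal{O}_Q(1))^{\oplus 5} \to H^0(\mathcal{O}_Q(2))$, $(\ell_0, \dots, \ell_4) \mapsto \sum_i \ell_i x_i$. This map is surjective because its image contains every quadratic monomial $x_i x_j$ and these span $H^0(\mathcal{O}_Q(2))$ (the restriction $H^0(\mathcal{O}_{\mathbb{P}^4}(2)) \to H^0(\mathcal{O}_Q(2))$ being onto, as $H^1(\mathcal{I}_Q(2)) = 0$), so $h^1(\Phi^\vee(1)) = 0$.

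No serious obstacle is expected here: the $\mathcal{A}$-part is essentially a reading of Bott's formula on $\mathbb{P}^3$, and the only point requiring a little care is the surjectivity of the coordinate-multiplication map $H^0(\mathcal{O}_Q(1))^{\oplus 5} \to H^0(\mathcal{O}_Q(2))$ used in the $\Phi^\vee(1)$ case, together with keeping straight which $\mathcal{O}_Q$-twist each bundle and each resolution carries.
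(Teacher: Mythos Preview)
Your argument is correct. Part (i) is essentially the paper's own proof, only spelled out in more detail (and with the correct twist $T\mathbb{P}^3(-3)$ in the second summand, where the paper has a typographical slip).

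For part (ii) your route genuinely differs from the paper's. The paper refers back to the method of Lemma~\ref{ba0}, namely the restriction sequence
\[
0 \to \Omega_{\PP^4}(t-2) \to \Omega_{\PP^4}(t) \to \Phi^\vee(t-1) \to 0
\]
(and its dual analogue for $\Phi$), reducing everything to Bott vanishing on $\PP^4$. You instead stay on $Q$, using the resolution (\ref{eqaa0}) and its dual together with the vanishing of intermediate cohomology of line bundles on $Q$. The paper's approach is slightly more uniform, since one invokes a single black box (Bott) rather than checking by hand that the multiplication map $H^0(\Oo_Q(1))^{\oplus 5}\to H^0(\Oo_Q(2))$ is surjective; on the other hand, your approach is more self-contained and avoids passing to the ambient $\PP^4$. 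Both are equally short and either would be acceptable here.
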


\begin{proof}
The projection formula gives $h^1(\mathcal {A}(-1)) = h^1(T\PP^3(-2)) +h^1(T\PP ^3(-2)) =0$ and $ h^1(\mathcal {A}^\vee (1))
= h^1(\Omega _{\PP^3}(2)) + h^1(\Omega _{\PP^3}(1))=0$.

For the second, we can apply the same idea in the proof of Lemma \ref{ba0}.
\end{proof}

\begin{lemma}
There is no globally generated and indecomposable vector bundle $\Ee$ of rank at least 4 on $Q$ with $(c_1,c_2)=(2,4)$ and $H^0(\Ee(-1))=0$.
\end{lemma}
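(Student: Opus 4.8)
The plan is to show that every globally generated vector bundle $\Ee$ of rank $r\geq 4$ on $Q$ with $(c_1,c_2)=(2,4)$ and $H^0(\Ee(-1))=0$ must be decomposable, by reducing to the rank $3$ classification and the list (\ref{dec}) of possible decomposable summands. Since $H^0(\Ee(-1))=0$ and $(c_1,c_2)=(2,4)$, the associated curve $C$ is either a disjoint union of two conics or a normal rational curve of degree $4$; by the computation preceding this lemma, $h^1(\Ff^\vee)$ equals $0$ in the first case and $1$ in the second, where $\Ff$ is the rank $3$ bundle fitting into (\ref{eqa7}). First I would dispose of the case where $C$ is a union of two conics: then $\alpha(\Ff)=h^1(\Ff^\vee)=0$, so by Lemma \ref{uuu}(ii) any $\Ee$ fitting in (\ref{eqa7}) has $\Oo_Q^{\oplus(r-3)}$ as a direct factor, hence $\Ee\cong \Oo_Q^{\oplus(r-3)}\oplus \Ff$ and is decomposable since $r\geq 4$.

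So the substance is the case where $C$ is a normal rational curve of degree $4$, i.e. $\Ff$ is the unique rank $3$ indecomposable bundle in that family, with $h^1(\Ff^\vee)=1$. If $r\geq 5$, then again $\alpha(\Ff)=1<r-3$, so Lemma \ref{uuu}(ii) gives $\Oo_Q^{\oplus(r-4)}$ as a direct factor of $\Ee$, and $\Ee$ is decomposable. Hence the only remaining case is $r=4$, where $\alpha(\Ff)=1=r-3$, so a general extension (\ref{eqa7}) has no trivial factor, and one must genuinely decide whether this bundle splits. Here I would use the fact that up to isomorphism there is a \emph{unique} non-split extension of $\Ff$ by $\Oo_Q$ (since $h^1(\Ff^\vee)=1$); by Corollary \ref{cor} this extension is forced, because its Chern classes are $(c_1,c_2,c_3)=(2,4,2)$ and it has no trivial factor, so it must be $\Sigma\oplus\Sigma$. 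Thus the unique rank $4$ candidate is $\Sigma\oplus\Sigma$, which is decomposable.

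The main obstacle is precisely the rank $4$ case: one must know that the extension (\ref{eqa7}) with a nonzero class in $H^1(\Ff^\vee)\cong \mathbb{K}$ is unique and then identify it. The identification is exactly what Corollary \ref{cor} provides — any rank $4$ bundle on $Q$ with $(c_1,c_2,c_3)=(2,4,2)$ and no trivial factor is $\Sigma\oplus\Sigma$ — and one checks via Remark \ref{pa} that the Chern classes of $\Ee$ built from a degree $4$ rational normal curve are indeed $(2,4,2)$. Once this is in place, in every case $r\geq 4$ the bundle $\Ee$ splits off either a trivial summand or is $\Sigma\oplus\Sigma$, so no globally generated indecomposable bundle of rank $\geq 4$ with $(c_1,c_2)=(2,4)$ and $H^0(\Ee(-1))=0$ exists, which is the claim.
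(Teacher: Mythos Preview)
Your argument is correct and follows essentially the same route as the paper: split according to whether $C$ is two disjoint conics or a rational normal quartic, use the computed values $h^1(\Ff^\vee)\in\{0,1\}$ together with Lemma~\ref{uuu} to force a trivial summand whenever $r\ge 5$ (or $r\ge 4$ in the first case), and then in the sole remaining case $r=4$ invoke Corollary~\ref{cor} to identify the unique non-split extension as $\Sigma\oplus\Sigma$. Your write-up is in fact a bit more explicit than the paper's (you spell out the $r\ge 5$ versus $r=4$ dichotomy and the Chern class check via Remark~\ref{pa}), but the strategy is identical.
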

\begin{proof}
Let $\Ee$ fit into the sequence (\ref{eqa7}) with $\Ff$ associated to the curve $C$, where $C$ is either a disjoint union of two conics or a normal rational curve of degree 4. In the first case, we have $h^1(\Ff^{\vee})=0$ and so the sequence splits. In particular, $\Ee$ has $\Oo_Q$ as its direct summand. In the second case, since we have $h^1(\Ff^\vee)=1$, the only possibility for the rank of indecomposable $\Ee$ is $4$ due to Lemma \ref{uuu}. By Corollary \ref{cor}, we have the unique vector bundle $\Sigma \oplus \Sigma$ of rank 4 with $(c_1,c_2,c_3)=(2,4,2)$ that is decomposable.
\end{proof}

Recall that the restriction map
$H^0(T\PP^4(-1)) \to H^0(\Phi)$ is bijective and that every non-zero section of $T\PP^4(-1)$ vanishes at a unique point
of $\PP^4$. Conversely, for each $P\in \PP^4$ there is a unique $s\in H^0(T\PP ^4(-1))\setminus \{0\}$. For any $P\in \PP^4$, let $s_P: \Oo _Q \to \Phi$ be the section of $\Phi$ vanishing at $P$. Since
$H^0(\Phi (-1))=0$, the sheaf $s_P(\Oo _Q)$ is a saturated subsheaf of $Q$. Set $\Bb _P:= \Phi /s_P(\Oo _Q)$. By construction $\Bb _P$ is a rank 3 torsion free sheaf
on $Q$ and has the same Chern classes as $\Phi$. Moreover, $\Bb _P$ is spanned and with no trivial factor. If $P\notin Q$, then $\Bb _P = \Aa _P$. If $P\in Q$,
then $\Bb _P$ is not locally free and $P$ is the unique point of $Q$ at which $\Bb _P$ is not locally free. Hence $\Bb _{P} \cong \Bb _{P'}$ if and only if $P=P'$.
Since $Q$ is homogeneous, all sheaves $\Bb _P$, $P\in Q$, are $\mbox{Aut}(Q)$-equivalent.

\begin{proposition}\label{indecom5}
Let $\Ee$ be a globally generated and indecomposable vector bundle of rank $r\geq 4$ on $Q$ with $(c_1,c_2,c_3)=(2,5,5)$. Then the rank $r$ of $\Ee$ is either 4 or 5 and in each case there exists such a vector bundle.
\end{proposition}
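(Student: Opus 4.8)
The plan is to push $\Ee$ onto the rank $3$ bundle appearing in (\ref{eqa7}), bound the rank by Lemma \ref{uuu}, eliminate the extremal value $r=6$ by an explicit splitting, and finally produce indecomposable examples in the two surviving ranks. Since $\Ee$ is indecomposable it has no trivial direct summand, so (\ref{eqa7}) writes it as $0\to\Oo_Q^{\oplus(r-3)}\to\Ee\to\Ff\to 0$ with $\Ff$ globally generated of rank $3$ and Chern classes $(2,5,5)$; as $c_3\ne 0$, the rank $3$ classification identifies $\Ff$ with the bundle associated to a smooth irreducible elliptic quintic $C\subset Q$, and $\Ff$ has no trivial summand, so $h^1(\Ff^\vee)=h^2(\Ii_C(-1))-2=3$. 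Part (2) of Lemma \ref{uuu} then forces $r-3\le 3$, i.e. $4\le r\le 6$; it remains to exclude $r=6$ and to exhibit indecomposable bundles for $r=4$ and $r=5$.

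To exclude $r=6$: here the classes $e_1,e_2,e_3$ form a basis of $H^1(\Ff^\vee)$, so the connecting map $H^2(\Ff(-3))\to H^3(\Oo_Q(-3))^{\oplus 3}$ for the twist of (\ref{eqa7}) by $\Oo_Q(-3)$ is dual to an isomorphism, which gives $H^1(\Ee^\vee)=H^2(\Ee(-3))=0$ and, similarly, $H^0(\Ee^\vee)=0$. Now $h^0(\Ee)=h^0(\Ff)+3=9$, and the evaluation map produces $0\to\Ee^\vee\to\Oo_Q^{\oplus 9}\to K^\vee\to 0$ with $K^\vee$ globally generated; a Chern class computation shows $K^\vee$ has invariants $(2,3,1)$, so by the rank $3$ classification $K^\vee\cong\Sigma\oplus\Oo_Q(1)$. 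Since $h^0(\Sigma\oplus\Oo_Q(1))=4+5=9=h^0(\Oo_Q^{\oplus 9})$ and $H^1(\Ee^\vee)=0$, the surjection $\Oo_Q^{\oplus 9}\to\Sigma\oplus\Oo_Q(1)$ induces an isomorphism on global sections, hence coincides with the evaluation map of $\Sigma\oplus\Oo_Q(1)$; its kernel is then the direct sum of the kernels of the evaluation maps of $\Sigma$ and of $\Oo_Q(1)$, namely $\Sigma^\vee\oplus\Phi^\vee$ (the second summand coming from the dual of (\ref{eqaa0})). Therefore $\Ee^\vee\cong\Sigma^\vee\oplus\Phi^\vee$, that is $\Ee\cong\Sigma\oplus\Phi$, contradicting indecomposability; so $r\in\{4,5\}$. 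I expect this step — checking that $K^\vee$ has exactly the invariants $(2,3,1)$ and that its defining surjection really is an evaluation map, so that the kernel splits cleanly — to be the main obstacle.

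For existence, when $r=4$ take any non-split extension $0\to\Oo_Q\to\Ee\to\Ff\to 0$ (possible since $h^1(\Ff^\vee)=3\ne 0$): it is globally generated with invariants $(2,5,5)$ and has no trivial summand by part (1) of Lemma \ref{uuu}, hence is automatically indecomposable, since by Proposition \ref{5.6} and a Chern class count the only rank $4$ bundles with invariants $(2,5,5)$ and no trivial summand that could decompose are $\Sigma\oplus\Sigma$ and $\Oo_Q(1)\oplus\Aa$, both of which have $c_2=4$. When $r=5$, choose $\Ff$ so that $\Hom(\Sigma,\Ff)=\Hom(\Aa,\Ff)=0$ (this holds for a general elliptic quintic $C$) and take the general extension $0\to\Oo_Q^{\oplus 2}\to\Ee\to\Ff\to 0$; then $\Ee$ is globally generated with invariants $(2,5,5)$ and, the two classes being independent in $H^1(\Ff^\vee)$, has no trivial summand. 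Were $\Ee$ decomposable, Proposition \ref{5.6} (splitting $c_1=2$ as $1+1$ with ranks $2+3$ and no trivial factors) would force $\Ee\cong\Sigma\oplus\Aa$, hence $\Hom(\Sigma,\Ee)\ne 0$ or $\Hom(\Aa,\Ee)\ne 0$; but $\Hom(\Sigma,\Oo_Q)=\Hom(\Aa,\Oo_Q)=0$ together with the choice of $\Ff$ give $\Hom(\Sigma,\Ee)\hookrightarrow\Hom(\Sigma,\Ff)=0$ and $\Hom(\Aa,\Ee)\hookrightarrow\Hom(\Aa,\Ff)=0$. Hence $\Ee$ is indecomposable, and both ranks $4$ and $5$ occur.
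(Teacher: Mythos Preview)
Your bound $4\le r\le 6$ and your rank $4$ existence argument are fine, and your treatment of $r=6$ is in fact cleaner than the paper's: computing $K^\vee$ directly, identifying it as $\Sigma\oplus\Oo_Q(1)$ via the rank $3$ classification, and reading off $\Ee^\vee\cong\Sigma^\vee\oplus\Phi^\vee$ from the evaluation sequences avoids both the irreducibility of the Hilbert scheme of elliptic quintics on $Q$ (Lemma \ref{l1}) and the Beilinson--type criterion of Ancona--Ottaviani that the paper invokes in Proposition \ref{b1}.

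However, your existence argument for $r=5$ has a genuine gap. You assert that $\Hom(\Sigma,\Ff)=0$ for a general elliptic quintic, but in fact $\Hom(\Sigma,\Ff)\ne 0$ for \emph{every} such $\Ff$. Your own $r=6$ step produces the universal extension $0\to\Oo_Q^{\oplus 3}\to\Sigma\oplus\Phi\to\Ff\to 0$; applying $\Hom(\Sigma,-)$ and using $\Hom(\Sigma,\Oo_Q)=H^0(\Sigma^\vee)=0$ together with $\Hom(\Sigma,\Phi)=0$ (both stable, $\mu(\Sigma)>\mu(\Phi)$), the identity of $\Sigma$ maps to a nonzero element of $\Hom(\Sigma,\Ff)$. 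So the vanishing you rely on is never available, and nothing in your argument prevents the general rank $5$ extension from being $\Sigma\oplus\Aa_P$. The paper proceeds differently: using the $r=6$ result it shows that any rank $5$ bundle $\Ee$ without trivial factor sits in $0\to\Sigma\to\Ee\to\Bb_P\to 0$ for a \emph{unique} $P\in\PP^4$, where $\Bb_P=\Phi/s_P(\Oo_Q)$; it then constructs indecomposable examples by taking $P\in Q$, where $\Bb_P$ is not locally free, whereas any decomposable $\Ee$ would be $\Sigma\oplus\Aa_O$ with $O\notin Q$ and hence have $\Bb_O=\Aa_O$ locally free.
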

\begin{proof}
According to Lemma \ref{uuu} and $h^1(\Ff^{\vee})=3$, the possibility for the rank of $\Ee$ is either 4, 5 or 6. In the case of $r=4$, $\Ee$ is indecomposable since there is no rank 4 vector bundle with $c_2=5$ in the list (\ref{dec}). Since the case $r=6$ can be obtained from Proposition \ref{b1}, let us assume that $r=5$.

If $\Ee$ is a globally generated vector bundle of rank 5 on $Q$ with the prescribed Chern classes, then we have $h^1(\Ee^\vee )=1$. Hence Proposition \ref{b1} implies that $\Ee$ fits in an exact sequence
\begin{equation}\label{eqo+1}
0 \to \Oo _Q\to \Sigma \oplus \Phi \stackrel{\sigma}{\to} \Ee \to 0.
\end{equation}
From (\ref{eqo+1}) we get non-zero maps $u: \Sigma \to \Ee$ and $v: \Phi \to \Ee$. Since $h^0(\Ee (-1)) =0$ and $\Sigma$ is stable, $u$ is injective. Let
$\Theta$ be the saturation of $u(\Sigma )$ in $\Ee$. Since $\Ee /\Theta$ is a spanned torsion free sheaf with no trivial factors,
we have $c_1(\Theta )\le 1$. Since the sheaf $u(\Sigma)$ is reflexive and $c_1(u(\Sigma)) =1$, we get $\Theta = u(\Sigma )$.
Since $\sigma$ is surjective, the map $v$ induces a surjective map $v': \Phi \to \Ee /u(\Sigma)$. The sheaf $\mbox{ker}(v')$ is reflexive (\cite{Hartshorne1}, Proposition 1.1)
and with rank 1. Hence $\mbox{ker}(v') \cong \Oo _Q(c)$ for some $c\in \mathbb {Z}$ (\cite{Hartshorne1}, Proposition 1.9).
Hence there is $P\in \PP^4$ such that $\Ee /u(\Sigma ) \cong \Bb _P$. Since $\Phi$ and $\Sigma$ are stable and $\mu (\Sigma )>\mu (\Phi)$, we have
$h^0(\Sigma^\vee \otimes \Phi )=0$. Since $\Sigma$ is simple and $h^1(\Sigma^\vee )=0$, (\ref{eqo+1}) gives $h^0(\Sigma^\vee\otimes \Ee )=1$.
Hence $u$ is the unique non-zero map $\Sigma \to \Ee$. Hence the point $P$ is unique. In other words, there exists a unique $P\in \PP^4$ such that $\Ee$ fits into the following sequence:
$$0\to \Sigma \to \Ee \to \Bb_P \to 0.$$

Since $h^1(\Sigma )=0$ and $\Sigma$ and $\Bb _P$ are spanned, every extension of $\Bb _P$ by $\Sigma$ is spanned. Note that every decomposable $\Ee$ is isomorphic to $\Sigma \oplus \Aa _P$ for some $P\in \PP^4\setminus Q$.

Let us assume that $P\in Q$. From the spectral sequence of local
and global Ext-functors and its associated 5 terms long exact sequence (Ex. 2 at page 75 in \cite{mc} and Theorem 2.5 in \cite{Hartshorne1}), we get an exact sequence
\begin{equation}\label{eqo+2}
\mbox{Ext}^1(\Bb _P,\Sigma ) \stackrel{f}{\to} H^0(\mathcal{\Ee}xt^1(\Bb _P,\Sigma )) \to H^2(\Bb _P^{\vee } \otimes \Sigma ).
\end{equation}

\quad {\emph {Claim 1:}} $H^2(\Bb _P ^{\vee }\otimes \Sigma )=0$.

\quad {\emph {Proof of Claim 1:}} By the definition of $\Bb _P$ we have an exact sequence
\begin{equation}\label{eqo+3}
0 \to \Oo _Q \stackrel{\psi}{\to} \Phi \to \Bb _P\to 0.
\end{equation}
Dualizing (\ref{eqo+3}) and looking at the definition of the map $\psi$ we get the exact sequence
\begin{equation}\label{eqo+4}
0 \to \Bb _P^{\vee } \to \Phi ^{\vee }\to \Ii _P \to 0.
\end{equation}
Since $\Sigma$ is locally free, from (\ref{eqo+4}) we get the exact sequence
\begin{equation}\label{eqo+5}
0 \to \Sigma \otimes \Bb _P^\vee \to \Sigma \otimes \Phi ^{\vee } \to \Ii _P\otimes \Sigma \to 0.
\end{equation}
Since $\Sigma$ is spanned, we have $h^1(\Ii _P \otimes \Sigma )=0$. Serre duality
gives $h^2(\Sigma \otimes \Phi ^{\vee }) = h^1(\Sigma ^{\vee }\otimes \Phi (-3)) = h^1(\Sigma \otimes \Phi (-4))$. We tensor the exact sequence
$$0 \to \Oo _Q(-1) \to \Oo _Q^{\oplus 5} \to \Phi \to 0$$
with $\Sigma (-4)$. Since $h^1(\Sigma (-4)) =h^2(\Sigma (-5)) =0$, we get $h^1(\Sigma \otimes \Phi (-4))=0$, concluding the proof of {\it{Claim 1}}. \qed

By {\it{Claim 1}}, the map $f: \mbox{Ext}^1(\Bb _P,\Sigma ) \to H^0(\Ee xt^1(\Bb _P,\Sigma ))$ is surjective. Since $\Bb _P$ is locally free outside $P$, the sheaf
$\Ee xt^1(\Bb _P,\Sigma )$ has $P$ as its support and hence the integer $h^0(\Ee xt^1(\Bb _P,\Sigma ))$ only depends from the local behavior
of $\Sigma$ at $P$. Hence $h^0(\Ee xt^1(\Bb _P,\Sigma )) = 2\cdot h^0(\Ee xt^1(\Bb _P,\Oo_Q))$. Notice that (\ref{eqo+3}) implies
$h^0(\Ee xt^1(\Bb _P,\Oo_Q))>0$. Since $h^0(\Oo _Q/\Ii _P)=1$, from (\ref{eqo+4}) we get $h^0(\Ee xt^1(\Bb _P,\Oo_Q))=1$. Hence the middle term
of an extension $\epsilon$ of $\Bb _P$ by $\Sigma$ is locally free if $f(\epsilon )\ne 0$.
Since $h^0(\Ee xt^1(\Bb _P,\Sigma )) =2$, we get a 2-dimensional vector space of extensions $\Ee$ of $\Bb _P$ by $\Sigma$ with locally free middle term.
Since the map $\Sigma \to \Ee$ is uniquely determined, up to a constant, we get a 1-dimensional family of pairwise non-isomorphic extensions
of $\Bb _P$ by $\Sigma$. In other words, there is a 4-dimensional family of pairwise non-isomorphic globally generated and indecomposable vector bundles of rank 5 on $Q$ with prescribed Chern classes.
\end{proof}

\begin{remark}\label{5i}
Let us assume that $P\not \in Q$ and then the pull-back of the Euler sequence gives
$$0 \to \Sigma \otimes \mathcal {A}_P^\vee \to \Sigma ^{\oplus 4} \to \Sigma(1) \to 0.$$
Since $h^0(\Sigma^{\oplus 4} )=16$ and $h^0(\Sigma (1)) =16$, we have $h^1(\Sigma \otimes \mathcal {A}_P^\vee)=h^0(\Sigma \otimes \mathcal {A}_P^\vee)$. Recall that we have a sequence
$$0\to \Oo_Q^{\oplus 2} \to \Aa_P^{\vee}(1) \to \Ii_{C_1\sqcup C_2} (2) \to 0,$$
where $C_1$ and $C_2$ are two disjoint conics such that the projective planes containing each conics intersect at a single point. Then we have $h^0(\Sigma \otimes \Aa_P^\vee)=h^0(\Ii_{C_1\sqcup C_2} \otimes \Sigma(1))$. Assume the existence of $s\in H^0(\Ii _{C_1\sqcup C_2}\otimes \Sigma (1))$. If $s$
vanishes on a divisor, then it vanishes on a quadric surface $Q\cap H$ plus at most a line. No such scheme contains $C_1\sqcup C_2$.
Hence $s$ only vanishes in codimension $2$. Set $D:= (s)_0$. $D$ is a locally complete intersection scheme with degree $5$, arithmetic genus
$1$ and $\omega _D \cong \Oo_D$. Since $D\supseteq C_1\sqcup C_2$ and $\deg (D)=5$, we get $D = C_1\cup C_2\cup L$ with $L$ a line. To get $\omega _D\cong \Oo _D$
we should have $\omega _D\vert _{C_i} \cong \Oo _{C_i}$ and hence $\deg (C_i\cap L) =2$. We would get $\deg (L\cap (C_1\cup C_2))=4$
and there is no such a line in $Q$. Thus we have $h^1(\Sigma \otimes \Aa_P^\vee)=0$.

It also implies that $h^1(\Sigma \otimes \Phi^\vee)=0$ using the sequence (\ref{eqgg0}).
\end{remark}

In order to study spanned vector bundle on $Q$ with $(c_1,c_2,c_3;r)=(2,5,5;6)$ we need the following lemma:
\begin{lemma}\label{l1}The set $S$ of all smooth elliptic curves $C\subset Q$ such that $\deg (C)=5$ and $C$ is linearly normal in $\PP^4$ is irreducible. \end{lemma}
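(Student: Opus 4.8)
The plan is to exhibit $S$ as the image of an irreducible parameter space under a dominant morphism, so that irreducibility follows. First I would recall that a smooth elliptic curve $C\subset\PP^4$ of degree $5$ that is linearly normal spans $\PP^4$, is projectively normal (indeed ACM, as used repeatedly above), and is cut out by quadrics with $h^0(\Ii_{C,\PP^4}(2))=5$; moreover any two such curves in $\PP^4$ differ by an element of $\mathrm{Aut}(\PP^4)=PGL(5)$, since they are the images of the same abstract curve under the complete linear system of a degree‑$5$ line bundle and $PGL(5)$ acts transitively on such embeddings up to the $1$‑dimensional choice of line bundle. Thus the abstract set of linearly normal degree‑$5$ elliptic curves in $\PP^4$ is itself irreducible — it is a single $PGL(5)$‑orbit fibered over the ($1$‑dimensional, irreducible) moduli of the pair (elliptic curve, degree‑$5$ class). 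Call this irreducible family $\mathcal{H}$.

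Next I would cut out $S$ inside $\mathcal{H}$ by the incidence condition $C\subset Q$. For a fixed such $C$, containment in a quadric is the condition that the defining quadratic form of $Q$ lie in the $5$‑dimensional space $H^0(\Ii_{C,\PP^4}(2))$; equivalently, fixing instead the curve‑class and letting $\mathrm{Aut}(\PP^4)$ act, $S$ is the image in $\mathcal{H}$ of the incidence variety
$$\Gamma=\{(C,g)\in\mathcal{H}_0\times PGL(5)\ :\ g(C)\subset Q\},$$
where $\mathcal{H}_0$ is one chosen curve (or a section of $\mathcal{H}\to$ moduli). Over each point of $\mathcal{H}_0$ the fiber of $\Gamma\to\mathcal{H}_0$ is the preimage under $PGL(5)$ of the locus of quadrics containing a fixed such elliptic curve; since the space of quadrics containing a fixed ACM degree‑$5$ elliptic curve is a fixed‑dimensional linear subspace $\PP^4\subset\PP^{14}$ of quadrics, this fiber is irreducible of constant dimension (an open subset of a $PGL(5)$‑bundle over that $\PP^4$). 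Hence $\Gamma$ is irreducible, and $S$, being a constructible image of $\Gamma$, is irreducible.

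The main obstacle I anticipate is verifying nonemptiness together with the constancy of the fiber dimension — i.e., that for \emph{every} $C$ in the family the space $H^0(\Ii_{C,\PP^4}(2))$ has the expected dimension $5$ and that a general translate actually meets $Q$ smoothly in such a curve. Nonemptiness is already guaranteed by Remark~\ref{i0} (which produces spanned bundles on $Q$ with $(c_2,c_3)=(5,5)$, hence such curves $C\subset Q$). Constancy of $h^0(\Ii_{C,\PP^4}(2))$ follows from projective normality (all these curves are ACM with the same Hilbert function, as they have the same degree and genus and span $\PP^4$, and $5\ge 2\cdot1+2$ so the ideal is generated by quadrics), so the incidence correspondence is equidimensional over $\mathcal{H}_0$. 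Smoothness of $g(C)\cap$‑type general members is automatic since $g(C)$ is already a smooth curve contained in $Q$. Thus the only real work is the bookkeeping that assembles $S$ as the image of the irreducible $\Gamma$; I would carry it out by constructing $\Gamma$ explicitly as above, checking irreducibility of its fibers over $\mathcal{H}_0$, and invoking the standard fact that a morphism from an irreducible variety has irreducible image.
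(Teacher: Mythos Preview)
Your approach is correct and is genuinely different from the paper's. The paper argues entirely inside $\mathrm{Hilb}(Q)$: it introduces the boundary locus $\Delta'$ of nodal curves $D\cup J$ (a degree-$4$ elliptic complete intersection plus a secant line), shows $\Delta'$ is irreducible and that each of its members is a smooth point of $\mathrm{Hilb}(Q)$ lying in $\overline{S}$, and then proves that \emph{every} component of $S$ specializes into $\Delta'$ by placing each $C\in S$ on a smooth Del Pezzo quartic surface $T\subset Q$, pinning down the class of $C$ in $\mathrm{Pic}(T)\cong\ZZ^6$ via a short Diophantine computation, and degenerating $C$ inside the linear system $|\Oo_T(C)|$ to a curve of $\Delta'$.

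Your incidence-correspondence route---using the classical irreducibility of the Hilbert scheme $\mathcal{H}$ of elliptic normal quintics in $\PP^4$ together with the constancy $h^0(\Ii_{C,\PP^4}(2))=5$---is more economical and avoids the Del Pezzo analysis entirely. Two small points to tighten: (i) the fiber of $\Gamma\to\mathcal{H}_0$ over $C$ is an $\mathrm{Aut}(Q)$-torsor over the open set of smooth quadrics through $C$, not a $PGL(5)$-bundle; it is the connectedness of $\mathrm{Aut}(Q)\cong PSO(5)$ that makes this fiber irreducible; (ii) rather than choosing a section $\mathcal{H}_0$ over the moduli (which is delicate at $j=0,1728$), take $\mathcal{H}_0=\mathcal{H}$ itself: then $(C,\mathrm{id})\in\Gamma$ for every $C\in S$, surjectivity of $\Gamma\to S$ is immediate, and $\Gamma$ is visibly an $\mathrm{Aut}(Q)$-bundle over an open piece of a $\PP^4$-bundle over the irreducible $\mathcal{H}$, hence irreducible. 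The paper's degeneration method, while heavier here, has the advantage of working entirely on $Q$ and of transporting verbatim to the genus-$2$ case treated in Lemma~\ref{l2}.
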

\begin{proof}
Fix $C\in \Ss$. Since $\deg (C)\ge 2p_a(C)+2$
and $C$ is linearly normal, we have $h^1(\PP ^4,\Ii _{C,\PP^4}(2)) =0$
and the homogeneous ideal of $C$ in $\PP^4$ is generated by quadrics \cite{f}. Hence $h^0(\PP^4,\Ii _{C,\PP^4}(2)) = 5$, $h^1(\Ii _C(2))=0$, $h^0(\Ii _C(2)) =4$ and $\Ii _C(2)$ is spanned.
We also know that $\Ii _{C,\PP^4}(2)$ is spanned.

Let $\Delta$ be the set of all $D\subset Q$ such that $D$ is a smooth complete intersection of $Q$, a quadric hypersurface and a hyperplane. For each $D\in \Delta$
we have $N_D \cong \Oo _D(2)\oplus \Oo _D(1)$ and so $h^1(D,N_D)=0$, $h^0(D,N_D) = 12$ and $h^1(D,N_D(-P))=0$ for all $P\in D$. Thus the set $\Delta$ is irreducible and of dimension 12, because it parametrizes
the smooth complete intersections curves in $Q$ of type $(2,1)$.

 Let $\Delta '$ be the set of all nodal curves $D\cup J \subset Q$ with $D\in \Delta$, $J$
a line and $\sharp (J\cap D)=1$. Each $D\cup J$ is connected, nodal and $p_a(D\cup J)=1$.

\quad {\emph {Claim 1:}} $\Delta '$ is irreducible and of dimension $14$.

\quad {\emph {Proof of Claim 1:}} For each $P\in Q$ the set $\eta (P)$ of all lines of $Q$ containing $P$ is isomorphic to $\mathbb {P}^1$. Fix $A\in \Delta $, $P\in A$
and $L\in \eta (P)$. If $A\cup L\notin \Delta '$, i.e. if $\deg (A\cap L) \ge 2$, then $L\subset Q_A$ where $Q_A$ is a hyperplane section of $Q$ and $Q_A\supset A$.
If $Q_A$ is a cone with vertex containing $P$. then $A\cup L\notin \Delta '$ for all $L\in \eta (P)$. If $Q_A$ is not a cone with vertex $P$, then
$A\cup L\in \Delta '$, except for one or two line $L\in \eta (P)$ (one line if $Q_A$ is singular, two lines if $Q_A$ is smooth). Now use the irreducibility of $\Delta$. \qed

\quad {\emph {Claim 2:}} Fix $E = A\cup L\in \Delta '$. Then the Hilbert scheme $\mbox{Hilb}(Q)$ of closed subschemes in $Q$ is smooth and 15-dimensional at $E$. Moreover $E$ is in the closure
$\overline{\mathcal {S}}$ of $\mathcal {S}$.

\quad {\emph {Proof of Claim 2:}} By \S 3 and \S 4 in \cite{hh}, we have $h^1(E,N_E) =0$. Hence $h^0(E,N_E) = 3\cdot \deg (E)=15$ and $\mbox{Hilb}(Q)$ is smooth of dimension $E$
at $Q$. By \cite{hh}, Theorem 4.1, $E$ is smoothable inside $Q$, i.e (by the definition of the set $\mathcal {S}$), we have $E\in \overline{\mathcal {S}}$. \qed

\quad {\emph {Claim 3:}} $\mathcal {S}$ is irreducible if for every irreducible component $\Gamma$ of $\mathcal {S}$ there is $E\in \Delta '$ such that $E\in \overline{\Gamma}$.

\quad {\emph {Proof of Claim 3:}}  Take two irreducible components $\Gamma _i$, $i=1,2$, of $\mathcal {S}$ and $E_i\in \Delta '$ such that $E_i\in \overline{\Gamma _i}$.
The set $\Delta '$ is irreducible from {\it{Claim 1}}. The Hilbert scheme $\mbox{Hilb}(Q)$ is smooth at each point of $\Delta '$. Hence $\overline{\Gamma }_1=
\overline{\Gamma }_2$. Hence $\Gamma _1=\Gamma _2$. \qed

\quad{\emph{Claim 4:}} Let $T\subset Q$ be the intersection of $Q$ with a general quadric hypersurface of $\PP^4$ containing $C$. Then $T$ is a smooth surface.

\quad{\emph{Proof of Claim 4:}} Since $\mathcal {I}_C(2)$ is spanned,
the scheme $T$ is a degree $4$ surface smooth outside $C$ (Bertini's theorem). Hence $T$ is irreducible. Fix $P\in C$. Let $A(C,P)$ denote the set
of all $W\in \vert \Ii _C(2)\vert$ which are singular at $P$. We have $\Ii _C/(\Ii _C)^2 \cong N_C^\vee$. Since $\Ii _C(2)$ is spanned, $N_C^\vee (2)$ is spanned.
Since $N_C^\vee (2)$ has rank $2$ and $N_C^\vee (2)$ is spanned, $A(C,P)$ is a linear subspace with codimension $2$ of $\vert \Ii _C(2)\vert$. Since
$\dim (C)=1$ and $T$ is general, we get $T\notin A(C,P)$ for all $P\in C$. Hence $T$ is smooth. \qed

\begin{proof}[Proof of Lemma \ref{l1}]
Fix $T$ as in {\it{Claim 4}}. Since $T\subset \PP^4$ is a smooth complete intersection of two quadric hypersurfaces, it is a smooth Del Pezzo surface of degree 4.
Hence $\mbox{Pic}(T) \cong \mathbb {Z}^6$ \cite{d}. Since $C$ is an integral curve of $T$, but not a line, $\deg (C) =5$ and $p_a(C)=1$, we may express $T$ as the blowing up of $\PP^2$ at $5$ points $P_1,\dots ,P_5$, so that, in the associated basis,
$C$ is represented by $(a,b_1,\dots ,b_5)\in \ZZ ^6$ with $a>0$, $a\ge b_1+b_2+b_3$, $b_1\ge b_2\ge b_3\ge b_4\ge b_5\ge 0$,
\begin{equation}\label{eqvaa1}
5 = 3a-b_1-b_2-b_3-b_4-b_5 \ ,
\end{equation}
\begin{equation}\label{eqvaa2}
a^2  = 5+\sum _{i=1}^{5} b_i^2.
\end{equation}
(see \cite{Hartshorne}, V, 4.12 and Ex. 4.8, or \cite{h1}, equation (2) at page 303, for the the case of a cubic surface). One solution is given by $a=3$, $b_1=b_2=b_3=b_4=1$
and $b_5=0$.

Fix another solution $(a,b_1,b_2,b_3,b_4,b_5)$. Since no plane curve of degree $\leq 2$ have normalization of genus 1 we may assume that $a\geq 3$. We may assume $b_1>0$, because 5 is not a multiple of 3.
Set $\epsilon := a-b_1-b_2-b_3$. We have $\epsilon \ge 0$ and $2a > b_1+b_2+b_3+b_4+b_5$. Hence it is sufficient to check all cases with $3 \le a \le 4$. Let us assume that $a=3$. From (\ref{eqvaa1}) and (\ref{eqvaa2}), we have $b_1+ \cdots + b_5=b_1^2+\cdots + b_5^2=4$ and so we have $b_1(b_1-1)+\cdots + b_5 (b_5-1)=0$. Since $n(n-1)\geq 0$ for all $n\in \ZZ$, each $b_i$ is either 0 or 1. Again by (\ref{eqvaa1}) we have $b_1=\cdots = b_4=1$ and $b_5=0$. When $a=4$, we similarly have $b_1(b_1-1)+\cdots + b_5(b_5-1)=-1$ and it is impossible.

Thus we can see $C$ as a subcurve of $T$ of type $(3,1,1,1,1,0)$. Take a smooth $D\in (3,1,1,1,1,1)$ and let $J$ be the line $(0,0,0,0,0,-1)$. Notice
that $D\in \Delta$ and $D\cup J\in \Delta '$. We may deform $C$ to $D\cup J$ inside $T$ in the linear system $\vert \Oo _T(3,1,1,1,1,0)\vert$. Hence we may deform $D\cup J$ inside $Q$, concluding the proof of lemma \ref{l1}.
\end{proof}

\begin{proposition}\label{b1}
Every spanned vector bundle on $Q$ with $(c_1,c_2,c_3;r)=(2,5,5;6)$ without trivial factors is isomorphic to $\Sigma \oplus \Phi$.
\end{proposition}

Let $\Ss$ be the set of all smooth elliptic curves $C\subset Q$ such that $\deg (C)=5$ and $C$ is linearly normal in $\PP^4$. We see $\Ss$ as an open subset of the Hilbert
scheme $\mbox{Hilb}(Q)$ of $Q$. Since
$h^0(C,\omega _C(1)) = 5$, it is sufficient to prove that for every $C\in \Ss$ we have $\Ee \simeq \Sigma \oplus \Phi$ in the exact sequence
$$0\to \Oo _Q^{ \oplus 5} \to \Ee \to \Ii _C(2)\to 0.$$
By Theorem 6.3. in \cite{AO} with $(t,j)=(0,1)$ and $\Ff = \Ee^\vee$, it is enough to prove that
$$h^1(\Ee^{\vee}(-2))=h^1(\Ee^{\vee})=h^2(\Ee^{\vee}\otimes \Sigma (-3))=0$$
so that $\Ee$ would have $h^1(\Ee^\vee (-1))=h^1(\Oo_C)=1$ factor of $\Phi$. Thus $\Ee$ would be $\Phi \oplus \Gg$ for some globally generated vector bundle $\Gg$ of rank 2 on $Q$ with $c_1=1$ and by Proposition 2.3 in \cite{BHM} $\Gg$ is isomorphic to either $\Sigma$ or $\Oo_Q\oplus \Oo_Q(1)$. Since $h^0(\Ee^\vee)=0$, so we would have $\Ee \simeq \Phi \oplus \Sigma$. Note that $h^1(\Ee^\vee (-2))=h^1(\Oo_C(1))=0$ and $h^1(\Ee^\vee)=h^2(\Ee(-3))$. Since $h^3(\Ee(-3))=h^0(\Ee^\vee)=0$, so we have $h^2(\Ee(-3))=h^2(\Ii_C(-1))-5=h^1(\Oo_C(-1))-5=h^0(\Oo_C(1))-5=5-5=0$.

 So it is sufficient to prove that $h^2(\Ee^\vee \otimes \Sigma (-3))=h^1(\Ii_C\otimes \Sigma (1))=0$ for every $C\in \Ss$. Set $\Ss ':= \{C\in \Ss :h^1(\Ii _C\otimes \Sigma (1)) =0\}$.
By the semicontinuity theorem for cohomology $\Ss '$ is an open subset of $\Ss$. Taking $5$ general sections of $\Sigma \oplus \Phi$ we get
$\Ss ' \ne \emptyset$. We want to show that $\Ss'=\Ss$. Let $\Ee$ be any bundle in in the closure $\overline{\Ss'}$ of $\Ss '$ in $\Ss$. By the semicontinuity
theorem for cohomology we have non-zero maps $u: \Sigma \to \Ee$ and $v: \Ee \to \Phi$ such
that $v\circ u = 0$. Since $\Sigma$ is stable, $h^0(\Ee (-1)) =0$ and $\mu (\Sigma ) > \mu (\Ee)$, we get that $u$ is injective. Let $\Theta$ be the saturation of $u(\Sigma )$
in $\Ee$. Since $\Ee$ is spanned and with no trivial factor, the torsion free
sheaf $\Ee /\Theta$ is spanned and with no trivial factor. Hence $c_1(\Ee /\Theta )>0$, i.e. $c_1(\Theta )\le 1$.
Since the sheaf $u(\Sigma )$ is reflexive and $c_1(u(\Sigma )) =1$, we get $\Theta = u(\Sigma )$. Since $v\circ u = 0$ and $u(\Sigma ) =\Theta$, $v$
induces a non-zero map $v': \Ee /u(\Sigma ) \to \Phi$ between torsion free sheaves with the same rank and $c_1=1$. Since $\Ee $ has no trivial factor,
every quotient of $\Ee /u(\Sigma )$ has positive $c_1$. Since $\Phi$ is stable, we get that $v'$ is injective. Since $c_i(\Ee /u(\Sigma )) = c_i(\Phi )$ for
all $i$, $v'$ is an isomorphism, i.e. $\Ee$ is an extension of $\Phi$ by $\Sigma $. Remark \ref{5i} gives
$\Ee \cong \Sigma \oplus \Phi$. Hence to by lemma \ref{l1}, we have $\Ss'=\Ss$, concluding the proof of Proposition \ref{b1}.
\end{proof}

\begin{lemma}\label{rm1}
For any vector bundles $\mathcal {A}_P$ and $\mathcal {A}_{O}$, we have
$$h^1(\Aa_O^\vee \otimes \Aa_P)=\left\{
                                                         \begin{array}{ll}
                                                         4, & \hbox{if $\Aa_O\cong \Aa_P$}; \\
                                                         3, & \hbox{if $\Aa_O \not\cong \Aa_P$}.
                                                         \end{array}
                                                      \right.$$
\end{lemma}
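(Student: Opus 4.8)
The plan is to compute $h^1(\Aa_O^\vee\otimes\Aa_P)=h^1(\mathcal{H}om(\Aa_O,\Aa_P))$ by tensoring the resolution~(\ref{ees1}) of $\Aa_P$ by the locally free sheaf $\Aa_O^\vee$. This produces the exact sequence
$$0 \to \Aa_O^\vee(-1) \to (\Aa_O^\vee)^{\oplus 4} \to \Aa_O^\vee\otimes\Aa_P \to 0,$$
so $h^1$ of the last term can be read off from the associated long exact cohomology sequence, once one knows the cohomology of $\Aa_O^\vee$ and $\Aa_O^\vee(-1)$ in degrees $\le 2$ together with $h^0(\Aa_O^\vee\otimes\Aa_P)$.

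For the cohomology of $\Aa_O^\vee$ I would dualize~(\ref{ees1}) to get $0 \to \Aa_O^\vee \to \Oo_Q^{\oplus 4} \to \Oo_Q(1) \to 0$ and use that the four linear forms realizing the projection $\phi_O$ are linearly independent on the nondegenerate quadric $Q$, so that $H^0(\Oo_Q^{\oplus 4})\to H^0(\Oo_Q(1))$ is injective; this yields $h^0(\Aa_O^\vee)=0$ and $h^1(\Aa_O^\vee)=1$ (the latter being also Lemma~\ref{gg0}). Twisting this dual sequence by $\Oo_Q(-1)$ and using that $Q$ is ACM, so $h^i(\Oo_Q(-1))=0$ for $i\le 2$ and $h^1(\Oo_Q)=0$, the long exact sequence gives $h^0(\Aa_O^\vee(-1))=0$, $h^1(\Aa_O^\vee(-1))=1$ and $h^2(\Aa_O^\vee(-1))=0$.

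Feeding these values into the cohomology sequence of $0 \to \Aa_O^\vee(-1) \to (\Aa_O^\vee)^{\oplus 4} \to \Aa_O^\vee\otimes\Aa_P \to 0$, the two degree-zero terms and $H^2(\Aa_O^\vee(-1))$ vanish, so I am left with
$$0 \to H^0(\Aa_O^\vee\otimes\Aa_P) \to \CC \to \CC^{\oplus 4} \to H^1(\Aa_O^\vee\otimes\Aa_P) \to 0,$$
whence $h^1(\Aa_O^\vee\otimes\Aa_P)=\dim\Hom(\Aa_O,\Aa_P)+3$. The last step is to evaluate $\dim\Hom(\Aa_O,\Aa_P)$. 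Both $\Aa_O$ and $\Aa_P$ are stable of slope $1/3$, so a nonzero homomorphism between them is automatically an isomorphism. Hence $\Hom(\Aa_O,\Aa_P)=\End(\Aa_P)=\CC$ (stable bundles are simple) when $\Aa_O\cong\Aa_P$, giving $h^1=4$, and $\Hom(\Aa_O,\Aa_P)=0$ otherwise, giving $h^1=3$.

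I do not expect any serious obstacle here: the content is essentially the assembly of the cohomology vanishings for $\Aa_O^\vee$, and the only steps needing a word of justification are the injectivity of $H^0(\Oo_Q^{\oplus 4})\to H^0(\Oo_Q(1))$ (nondegeneracy of $Q$), the vanishing $h^2(\Aa_O^\vee(-1))=0$ (the ACM property of $Q$), and the classical fact that a nonzero map between stable sheaves of equal slope is an isomorphism.
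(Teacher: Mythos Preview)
Your argument is correct and follows essentially the same route as the paper: both tensor the resolution~(\ref{ees1}) of $\Aa_P$ by $\Aa_O^\vee$ to obtain the sequence $0\to\Aa_O^\vee(-1)\to(\Aa_O^\vee)^{\oplus 4}\to\Aa_O^\vee\otimes\Aa_P\to 0$ and read off $h^1$ from the long exact cohomology sequence. The only cosmetic difference is that the paper computes the auxiliary numbers $h^i(\Aa_O^\vee(t))$ via the projection formula on $\PP^3$ (reducing to Bott-type vanishings for $\Omega_{\PP^3}$ and $T\PP^3$), whereas you extract them directly from the dual of~(\ref{ees1}) on $Q$; your added remark that $h^0(\Aa_O^\vee\otimes\Aa_P)=\dim\Hom(\Aa_O,\Aa_P)$ is $1$ or $0$ by stability makes explicit the case split that the paper leaves to the reader.
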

\begin{proof}
Let us consider the  exact sequence
\begin{equation}\label{eqrm1}
0 \to \mathcal {A}_O^\vee (-1) \to (\mathcal {A}_O^\vee )^{\oplus 4} \to \mathcal {A}_O^\vee \otimes \mathcal{A}_{P} \to 0
\end{equation}
and then we have
\begin{align*}
&h^2(\Aa_O^\vee (-1)) = h^1(\Aa _O(-2)) = h^1(T\PP ^3(-3)) + h^1(T\PP ^3(-4)) =0,\\
&h^1(\mathcal {A}_O^\vee(-1) )=h^1(\Omega _{\PP^3}) +h^1(\Omega _{\PP^3}(-1)) =1, \text{ and}\\
&h^1(\mathcal {A}_O^\vee )=h^1(\Omega _{\PP^3}(1)) +h^1(\Omega _{\PP^3}) =1.
\end{align*}
Then we can use (\ref{eqrm1}) to obtain the assertion.
\end{proof}

\begin{proposition}
There exists a globally generated and indecomposable vector bundle of rank $r\geq 4$ on $Q$ with $(c_1, c_2)=(2,6)$ if and only if we have $4\leq r \leq 7$.
\end{proposition}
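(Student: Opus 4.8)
The plan is to split the statement into the four existence claims ($r=4,5,6,7$) and the upper bound $r\le 7$. Throughout I use that, by Proposition~\ref{c3}, a globally generated $\Ee$ with $(c_1,c_2)=(2,6)$ has $(c_1,c_2,c_3)=(2,6,8)$ and sits in (\ref{eqa7}) with $\Ff$ a rank $3$ globally generated bundle with the same Chern classes, associated to a smooth irreducible curve $C\subset Q$ of degree $6$ and genus $2$; for such $\Ff$ we already know $h^1(\Ff^\vee)=h^2(\Ff(-3))=5$, and since any such $\Ff$ has $h^0(\Ff(-1))=0$ (otherwise $c_2\le 4$ by Proposition~\ref{prop2}), every $\Ee$ as above has $H^0(\Ee(-1))=0$, so the analysis leading to (\ref{dec}) applies. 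Two facts organize the whole proof: \emph{(a)} a rank $4$ or rank $5$ globally generated bundle with $(c_1,c_2)=(2,6)$ and no trivial summand is automatically indecomposable, because by (\ref{dec}) the only decomposable such bundles are $\Aa_O\oplus\Aa_P$, $\Aa\oplus\Phi$, $\Phi\oplus\Phi$, of ranks $6,7,8$; \emph{(b)} for the same reason, a decomposable rank $6$ (resp. $7$, resp. $8$) such bundle must be $\Aa_O\oplus\Aa_P$ (resp. $\Aa\oplus\Phi$, resp. $\Phi\oplus\Phi$).

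\emph{Existence for $r=4,5$.} Fix an indecomposable rank $3$ globally generated $\Ff$ with $(c_1,c_2,c_3)=(2,6,8)$ and let $\Ee$ be a general extension (\ref{eqa7}) with $r-3\in\{1,2\}$ classes in $H^1(\Ff^\vee)$ (possible since $h^1(\Ff^\vee)=5$). As $h^1(\Oo_Q)=0$ and $\Ff$ is globally generated, $\Ee$ is globally generated; by Lemma~\ref{uuu}(ii) it has no trivial summand; hence by \emph{(a)} it is indecomposable, with $(c_1,c_2,c_3)=(2,6,8)$.

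\emph{Existence for $r=6,7$.} Here I would produce $\Ee$ as an extension inside the family $\mathfrak F$. For $r=6$, choose distinct $O,P\in\PP^4\setminus Q$ and a non-split extension
$$0\to\Aa_O\to\Ee\to\Aa_P\to 0,$$
which exists because $\Ext^1(\Aa_P,\Aa_O)=h^1(\Aa_O^\vee\otimes\Aa_P)=3\ne 0$ by Lemma~\ref{rm1}. Then $\Ee$ is globally generated (extension of globally generated bundles with $h^1(\Aa_O)=0$), has $(c_1,c_2,c_3)=(2,6,8)$, and has no trivial summand because $h^0(\Aa_X^\vee)=0$ by stability. If $\Ee$ were decomposable, then $\Ee\cong\Aa_{O'}\oplus\Aa_{P'}$ by \emph{(b)}; since each $\Aa$ is stable and $\Aa_X\cong\Aa_Y$ iff $X=Y$, the inclusion $\Aa_O\hookrightarrow\Ee$ identifies $\Aa_O$ (up to change of summands) with a direct summand, so $\Ee/\Aa_O\cong\Aa_{P'}$; but $\Ee/\Aa_O\cong\Aa_P$, so $P'=P$ and the sequence splits, a contradiction. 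For $r=7$, take instead a non-split extension
$$0\to\Phi\to\Ee\to\Aa\to 0,$$
which exists because $\Ext^1(\Aa,\Phi)=h^1(\Aa^\vee\otimes\Phi)\ne 0$: tensoring the dual of (\ref{ees1}) by $\Phi$ gives $0\to\Aa^\vee\otimes\Phi\to\Phi^{\oplus 4}\to\Phi(1)\to 0$, and $h^1(\Phi)=0$ together with $h^0(\Phi(1))>4\,h^0(\Phi)$ forces $h^1(\Aa^\vee\otimes\Phi)>0$. Again $\Ee$ is globally generated with $(c_1,c_2,c_3)=(2,6,8)$ and no trivial summand; if $\Ee\cong\Aa_{O'}\oplus\Phi$, then using the simplicity of $\Phi$, the vanishing $h^0(\Phi^\vee)=0$ and the $\Hom$-vanishing $\Hom(\Phi,\Aa_{O'})=0$ among members of $\mathfrak F$, the subsheaf $\Phi\hookrightarrow\Ee$ must be the $\Phi$-summand, whence $\Aa_{O'}\cong\Ee/\Phi\cong\Aa$ and the sequence splits, a contradiction. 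Thus $\Ee$ is indecomposable.

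\emph{The bound $r\le 7$.} If $\Ee$ is globally generated, indecomposable, of rank $r\ge 4$ with $(c_1,c_2)=(2,6)$, then $\Ee$ has no trivial summand, so by Lemma~\ref{uuu}(i) the $r-3$ classes in (\ref{eqa7}) are linearly independent in the $5$-dimensional $H^1(\Ff^\vee)$, giving $r\le 8$. To exclude $r=8$ I would argue as in Proposition~\ref{b1}: when $r=8$ the classes span $H^1(\Ff^\vee)$, so dualizing (\ref{eqa7}) gives $h^1(\Ee^\vee)=0$; moreover $h^1(\Ee^\vee(-2))=h^2(\Ee(-1))=0$ because $h^1(\Oo_C(1))=0$ (degree $6>2$ on a genus $2$ curve); and $h^2(\Ee^\vee\otimes\Sigma(-3))=h^1(\Ee\otimes\Sigma(-1))$ reduces, via (\ref{eqa7}) and (\ref{eqd2}) tensored by $\Sigma(-1)$ and the vanishing $h^1(\Sigma(-1))=h^2(\Sigma(-1))=0$, to $h^1(\Ii_C\otimes\Sigma(1))$. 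Granting $h^1(\Ii_C\otimes\Sigma(1))=0$, the Abo--Ottaviani criterion (Theorem~6.3 of \cite{AO}, applied to $\Ee^\vee$ with $(t,j)=(0,1)$, exactly as in Proposition~\ref{b1}) forces $\Ee$ to split off $h^1(\Ee^\vee(-1))=h^1(\Oo_C)=2$ copies of $\Phi$; as $2\,\mathrm{rank}(\Phi)=8=\mathrm{rank}(\Ee)$, this yields $\Ee\cong\Phi\oplus\Phi$, contradicting indecomposability. The step I expect to be the real obstacle is precisely the vanishing $h^1(\Ii_C\otimes\Sigma(1))=0$ for \emph{every} degree $6$, genus $2$ curve $C\subset Q$, not merely the general one. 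I would prove it as $\Ss'=\Ss$ in Proposition~\ref{b1}: the locus where it holds is open and nonempty ($5$ general sections of $\Phi\oplus\Phi$ realize $\Phi\oplus\Phi$ as the rank $8$ bundle of such a curve, for which $h^1(\Ii_C\otimes\Sigma(1))=2\,h^1(\Phi\otimes\Sigma(-1))=0$), and it is closed because a bundle coming from a curve in the closure carries, by semicontinuity, nonzero maps $\Phi\to\Ee\to\Phi$ which, using $h^1(\Phi^\vee\otimes\Phi)=0$ (Lemma~\ref{ba0}) and the analysis of Remark~\ref{5i}, again force $\Ee\cong\Phi\oplus\Phi$. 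The closedness argument needs the irreducibility of the Hilbert scheme of such curves on $Q$, which I would establish exactly along the lines of Lemma~\ref{l1} (smoothness of the Hilbert scheme at these curves, degeneration to the union of a complete intersection curve and a line, and deformation inside a degree $4$ Del Pezzo surface cut on $Q$ by a quadric). This completes the proof, so existence holds precisely for $4\le r\le 7$.
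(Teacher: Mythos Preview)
Your overall strategy matches the paper's proof closely: general extensions for $r=4,5$; a non-split extension of $\Aa_P$ by $\Aa_O$ for $r=6$; a non-split extension of $\Aa$ by $\Phi$ for $r=7$; and the exclusion of $r=8$ via Proposition~\ref{c1}, whose proof you outline correctly (including the reduction to $h^1(\Ii_C\otimes\Sigma(1))=0$ and the irreducibility of the parameter space of such curves).

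There is, however, one genuine error in your $r=7$ indecomposability argument. You assert $\Hom(\Phi,\Aa_{O'})=0$, but this is false: the very exact sequence (\ref{eqgg0}), $0\to\Oo_Q\to\Phi\to\Aa\to 0$, exhibits a nonzero surjection $\Phi\to\Aa$. The paper's argument avoids this by composing the inclusion $\Phi\hookrightarrow\Ee\cong\Aa_{O'}\oplus\Phi$ with the projection onto the $\Phi$-summand instead; this composite $\beta:\Phi\to\Phi$ is nonzero simply because $\mathrm{rank}(\Phi)>\mathrm{rank}(\Aa_{O'})$ (so $\Phi$ cannot inject into $\Aa_{O'}$), and then simplicity of $\Phi$ forces $\beta$ to be an isomorphism, splitting the sequence. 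Your argument is easily repaired along these lines, but as written the key step fails.

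A smaller inaccuracy: in your sketch of the irreducibility needed for $r=8$ you propose degenerating $C$ to a complete intersection curve union a \emph{line}, as in Lemma~\ref{l1}. For degree $6$, genus $2$ this does not match the numerics; the paper's Lemma~\ref{l2} instead degenerates to $D\cup J$ with $D\in\Delta$ and $J$ a smooth \emph{conic} meeting $D$ in two points, and the Del Pezzo analysis yields the types $(4,2,1,1,1,1)$ and $(5,2,2,2,2,1)$ rather than a single type. The rest of your outline (open, nonempty, semicontinuity plus stability forcing $\Ee\cong\Phi\oplus\Phi$ on the closure, hence $\Ww'=\Ww$) is exactly the paper's argument.
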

\begin{proof}
As in the proof of Proposition (\ref{indecom5}), we have $4\leq r \leq 8$ where $r$ is the rank of a globally generated and indecomposable vector bundle $\Ee$. From the list (\ref{dec}) and Lemma (\ref{uuu}), a vector bundle in the general extension is indecomposable if $r$ is either 4 or 5. Let us assume that $r=6$. By Lemma \ref{rm1}, there exists a non-zero extension $\Aa_O$ by $\Aa_P$ :
\begin{equation}\label{eqrm2}
0 \to \Aa _P \to \Ee _{P,O} \to \Aa _O\to 0,
\end{equation}
where we allow the case $P=O$. Assume that $\Ee _{P,O}$ is decomposable. Since $h^1(\Aa _P)=0$, so $\Ee _{P,O}$ is globally generated. Hence the classification \ref{dec} gives $\Ee _{P,O} \cong \Aa _A\oplus \Aa _B$ for some $A, B\in \PP^4\setminus Q$ and we get a non-zero map $f: \Aa _D \to \Aa _O$
for some $D\in \{A,B\}$. Since $\Aa _D$ and $\Aa _O$ are stable, we get that $f$ is an isomorphism. We also get that
the image of $\Aa _P$ in (\ref{eqrm2}) is a direct factor of $\Aa _D$ in $\Ee _{P,O}$. Hence (\ref{eqrm2}) splits, contradicting our assumption.

Now let us deal with the case $r=7$.
From (\ref{eqaa0}) we get the exact sequence
\begin{equation}\label{eqaa1}
0 \to \mathcal {A}^\vee (-1) \to (\mathcal {A}^\vee )^{\oplus 5} \to \mathcal {A}^\vee \otimes \Phi \to 0.
\end{equation}
Note that $h^1(\mathcal {A}^\vee(-1) )=h^1(\mathcal {A}^\vee )=1$ and $h^2(\mathcal {A}^\vee (-1)) = h^1(\mathcal {A}(-2)) =0$. Since
$\Aa$ and $\Phi$ are stable with slopes $1/3$ and $1/4$, respectively, so we have $h^0( \mathcal {A}^\vee \otimes \Phi )=0$ and thus $h^1(\Phi \otimes \mathcal {A}^\vee )=4$.

 Let $\Ee$ be any vector bundle fitting in a non-trivial extension
\begin{equation}\label{eqba1}
0 \to \Phi \to \Ee \stackrel{\psi}{\to}\mathcal {A}_P\to 0.
\end{equation}
Assume that $\Ee$ is decomposable. Since $h^1(\Phi )=0$, $\Ee$ is globally generated. By the list (\ref{dec}) we get $\Ee \cong \Phi \oplus \mathcal {A}_O$ for
some $O\in \PP^4\setminus O$. Composing the inclusion $\Phi \to \Ee$ in (\ref{eqba1}) with the projection of $\Phi \oplus \mathcal {A}_O$
onto its first factor gives a non-zero map $\beta : \Phi \to \Phi $, because $\mathrm{rank} (\Phi )>\mathrm{rank} (\mathcal {A}_O)$. Since $\Phi$ is simple
 by Lemma \ref{ba0}, $\beta$ is an isomorphism. Hence $\beta$ gives a splitting of (\ref{eqba1}), a contradiction.

The case of $r=8$ can be obtained from the proposition \ref{c1}.
\end{proof}

\begin{remark}
There is no indecomposable vector bundle that is an extension of $\Phi$ by $\Aa$. Indeed from the exact sequence
$$\Phi^\vee (-1) \to (\Phi^\vee)^{\oplus 4} \to \Aa \otimes \Phi^{\vee} \to 0,$$
we have $h^0(\Phi^\vee \otimes \Aa)=h^1(\Phi^\vee(-1))=h^0(\Oo_Q)=1$. Taking the dual of (\ref{eqaa0}) and tensoring with $\mathcal {A}$, we get the exact sequence
\begin{equation}\label{eqaa4}
0 \to \Aa \otimes \Phi ^\vee \to  \mathcal {A}^{\oplus 5} \stackrel{\alpha}{\to} \mathcal {A}(1) \to 0.
\end{equation}
We also have $h^1(\mathcal {A}^{\oplus 5})=0$, $h^0(\mathcal {A}^{\oplus 5}) =20$ and $h^0(\mathcal {A}(1)) = h^0(T\PP^3 )+h^0(T\PP^3(-1)) =15+4$.
Using the sequence (\ref{eqaa4}), we have $h^1(\Aa \otimes \Phi^\vee)=0$.

\end{remark}
In order to study spanned vector bundle on $Q$ with $(c_1,c_2,c_3;r)=(2,6,8;8)$ we need the following lemma:
\begin{lemma}\label{l2}The set $\Ww$ of all smooth and connected $C\subset Q$ such that $\deg (C)=6$, $p_a(C)=2$ and $C$ is linearly normal in $\PP^4$ is irreducible. \end{lemma}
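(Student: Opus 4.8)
The plan is to mirror the proof of Lemma~\ref{l1}, now replacing the bridge family $\Delta'$ (elliptic quartic plus a line) by a family of nodal curves that are an elliptic quartic with a conic attached. Fix $C\in\Ww$. Since $\deg(C)=6=2p_a(C)+2$ and $C$ is linearly normal, Remark~\ref{i0} gives that the homogeneous ideal of $C$ in $\PP^4$ is generated by quadrics, so $\Ii_{C,\PP^4}(2)$ is spanned and, after restriction to $Q$, $h^1(\Ii_C(2))=0$, $h^0(\Ii_C(2))=3$ and $\Ii_C(2)$ is spanned; in particular $N_C^\vee(2)$ is spanned. Exactly as in \emph{Claim 4} of the proof of Lemma~\ref{l1}, the general $T\in|\Ii_C(2)|$ is then a smooth surface: smooth off $C$ by Bertini, and smooth along $C$ because for each $P\in C$ the members of $|\Ii_C(2)|$ singular at $P$ form a linear subspace of codimension $2$, while $\dim C=1$. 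Writing $T=Q\cap T'$ with $T'$ a quadric hypersurface, $T$ is a smooth Del Pezzo surface of degree $4$, hence the blow-up of $\PP^2$ at $5$ points in general position, with $\mbox{Pic}(T)\cong\ZZ\ell\oplus\ZZ e_1\oplus\cdots\oplus\ZZ e_5$ and hyperplane class $H=3\ell-e_1-\cdots-e_5$ (\cite{d}, \cite{Hartshorne}~V).

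Next I would determine the class of $C$ on $T$. Writing $C\sim a\ell-\sum_{i=1}^5 b_i e_i$, after an automorphism of $T$ we may assume $a>0$, $b_1\ge\cdots\ge b_5\ge 0$ and $a\ge b_1+b_2+b_3$ (these are the normalizing constraints used in Lemma~\ref{l1}; $b_i\ge 0$ since $C$ is irreducible and distinct from the exceptional lines). The conditions $\deg(C)=3a-\sum b_i=6$ and $2p_a(C)-2=C^2+C\cdot K_T=(a^2-\sum b_i^2)-6=2$ are the analogues of equations (\ref{eqvaa1})--(\ref{eqvaa2}), namely $3a-\sum b_i=6$ and $a^2-\sum b_i^2=8$. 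Cauchy--Schwarz forces $a\in\{4,5\}$, and a short case analysis leaves only $(a;b_1,\dots,b_5)=(4;2,1,1,1,1)$; the remaining solution $(5;2,2,2,2,1)$, which is the image of $(4;2,1,1,1,1)$ under a quadratic Cremona transformation of $\PP^2$, violates $a\ge b_1+b_2+b_3$. Thus $C\sim 4\ell-2e_1-e_2-e_3-e_4-e_5=H+(\ell-e_1)$, i.e. the class of a smooth elliptic quartic (a complete intersection of type $(2,1)$ in $Q$) plus the class of a smooth conic, and a general $A\in|H|$ meets a general $B\in|\ell-e_1|$ transversally in $H\cdot(\ell-e_1)=2$ points. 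Since the general member of $|\Oo_T(C)|$ is a smooth irreducible curve, nondegenerate in $\PP^4$ (as $H-C$ is not effective) of degree $6$ and $p_a=2$, hence an element of $\Ww$, the nodal curve $A\cup B\in|\Oo_T(C)|$ lies in the closure of the component of $\Ww$ containing $C$.

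Now introduce the bridge family $\Delta''$ of all $A\cup B\subset Q$ with $A\in\Delta$ (the irreducible $12$-dimensional family of smooth complete intersections of type $(2,1)$ in $Q$ from Lemma~\ref{l1}), $B$ a smooth conic with $\langle B\rangle\not\subset\langle A\rangle$, and $A\cup B$ nodal with $\#(A\cap B)=2$; every such curve is connected and nodal with $\deg=6$ and $p_a=1+0+2-1=2$. The family $\Delta''$ is irreducible: it fibres over the irreducible $\Delta$, and the fibre over $A$ is (an open subset of) the total space, over $\mbox{Sym}^2A$, of the $\PP^2$ of planes of $\PP^4$ through a chord of $A$, with $B$ the conic residually cut by $Q$, hence irreducible. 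For a general $A\cup B\in\Delta''$ one has $N_{A/Q}=\Oo_A(2)\oplus\Oo_A(1)$ and, for a general conic, $N_{B/Q}=\Oo_B(2)\oplus\Oo_B(2)$; consequently all twists of $N_{A/Q}$ and $N_{B/Q}$ at the two nodes that enter the normal-bundle sequence of a nodal curve have vanishing $h^1$, so by \cite{hh}, \S\S3--4, $h^1(N_{A\cup B/Q})=0$. Hence $\mbox{Hilb}(Q)$ is smooth of dimension $18$ at $A\cup B$, and $A\cup B$ is smoothable inside $Q$ (\cite{hh}, Theorem~4.1), so $A\cup B\in\overline{\Ww}$.

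The conclusion then follows exactly as in \emph{Claim 3} of the proof of Lemma~\ref{l1}: any irreducible component $\Gamma$ of $\Ww$ has a general member lying on a smooth Del Pezzo $T$, which by the second step specializes on $T$ to a curve of $\Delta''$, so $\overline{\Gamma}$ meets $\Delta''$; since $\mbox{Hilb}(Q)$ is smooth along the irreducible $\Delta''$, a unique component of $\mbox{Hilb}(Q)$ passes through each point of $\Delta''$, whence all components $\Gamma$ of $\Ww$ have the same closure and $\Ww$ is irreducible. I expect the main obstacle to be the Picard-lattice step: one must make sure the Diophantine analysis on $\mbox{Pic}(T)$ captures \emph{all} classes of smooth irreducible degree-$6$, $p_a=2$ curves on $T$ and that the finitely many solutions lie in a single orbit of the Weyl group of $T$, so that the degeneration into $\Delta''$ is available uniformly over every component of $\Ww$; a secondary technical point is controlling the splitting type of $N_{B/Q}$ for a general conic, and if a balanced general conic is not convenient one may instead attach to $A$ a chain of two lines, to which the smoothing results of \cite{hh} apply directly.
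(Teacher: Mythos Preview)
Your proposal is correct and follows essentially the same strategy as the paper: prove $\Ii_C(2)$ is spanned and place $C$ on a smooth Del Pezzo quartic $T$, solve the Diophantine system in $\mbox{Pic}(T)$ to pin down the class of $C$, and then degenerate inside $|\Oo_T(C)|$ to a nodal curve in an irreducible bridge family $\Delta''$ of elliptic quartics with an attached conic, along which $\mbox{Hilb}(Q)$ is smooth by \cite{hh}. Two small remarks: your computation $h^0(\Ii_C(2))=3$ is the correct one (the paper's ``$4$'' is a copy--paste slip from the degree-$5$ case), and your discarding of $(5;2,2,2,2,1)$ via the normalization $a\ge b_1+b_2+b_3$ is cleaner than the paper, which treats both $(4;2,1,1,1,1)$ and its Cremona transform $(5;2,2,2,2,1)$ as separate cases --- your observation that they are Weyl-equivalent makes the second case redundant.
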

\begin{proof}

Fix $C\in \Ww$.
Since $\deg (C)\ge 2p_a(C)+2$
and $C$ is linearly normal, we have $h^1(\PP ^4,\Ii _{C,\PP^4}(2)) =0$
and the homogeneous ideal of $C$ in $\PP^4$ is generated by quadrics \cite{f}. Hence $h^0(\PP^4,\Ii _{C,\PP^4}(2)) = 5$, $h^1(\Ii _C(2))=0$, $h^0(\Ii _C(2)) =4$ and $\Ii _C(2)$ is spanned.
We also know that $\Ii _{C,\PP^4}(2)$ is spanned.

Again let $\Delta$ be the set of all $D\subset Q$ such that $D$ is a smooth complete intersection of $Q$, a quadric hypersurface and a hyperplane. The set $\Delta \subset \mbox{Hilb}(Q)$ is irreducible and of dimension $12$. Let $\Delta ''$ be the set of all nodal curves
$D\cup J \subset Q$ with $D\in \Delta$, $J$ a conic, $\sharp (J\cap D)=2$ such that the line passing through the two points of $D\cap J$ is not contained in $Q$. Each $D\cup J$ is
connected, nodal and
$p_a(D\cup J)=2$.

\quad {\emph {Claim 1:}} Each $D\cup J$ is a smooth point of $\mbox{Hilb}(Q)$ and $\Delta ''$ is irreducible
and of dimension 16.

\quad {\emph {Proof of Claim 1:}} Since $D\cup J$ is nodal, to prove
the first statement it is sufficient to prove $h^1(D\cup J,N_{D\cup J})=0$. We have
$N_D\cong \Oo _D(2)\oplus \Oo _D(1)$, while $N_J\cong \Oo _J(1)\oplus \Oo _J(1)$.
Apply the easy part of \cite{hh}, Theorem 4.1.
Now we prove the second assertion of {\it{Claim 1}}. The set $\Delta$ is irreducible and of dimension 12. Each $D\in \Delta$ is irreducible and hence
the set of all $P, P'\in D$ such that $P\ne P'$ is irreducible and of dimension $2$. Fix $P,P'\in Q$ such that
$P\ne P'$ and the line containing them is not contained in $Q$. The set $D(P,P')$ of all smooth conics $J$ containing $P_1$ and $P_2$
is a non-empty irreducible set of dimension $2$. Now assume $P,P'\in D$. The set of all $J\in D(P,P')$ such
that $J\cap D =\{P,P'\}$ and $J$ is not tangent to $D$ either at $P$ or at $P'$ is a non-empty open subset of $D(P,P')$. \qed

By {\it{Claim 1}} (as in {\it{Claim 3}} of Proposition \ref{b1}) $\Ww$ is irreducible if for every connected component $\Gamma$ of
$\Ww$ we have $\overline{\Gamma}\cap \Delta '' \ne \emptyset$, where $\overline{\Gamma}$ is the closure of $\Gamma$ in $\mbox{Hilb}(Q)$.

\quad{\emph{Claim 2:}} Fix $C\in \Ww$. Let $T\subset Q$ be the intersection of $Q$ with a general quadric hypersurface of $\PP^4$ containing $C$. Then $T$ is a smooth surface.

\quad{\emph{Proof of Claim 2:}} Since $\mathcal {I}_C(2)$ is spanned,
the scheme $T$ is a degree $4$ surface smooth outside $C$ (Bertini's theorem). Hence $T$ is irreducible. Fix $P\in C$. Let $A(C,P)$ denote the set
of all $W\in \vert \Ii _C(2)\vert$ which are singular at $P$. We have $\Ii _C/(\Ii _C)^2 \cong N_C^\vee$. Since $\Ii _C(2)$ is spanned, $N_C^\vee (2)$ is spanned.
Since $N_C^\vee (2)$ has rank $2$ and $N_C^\vee (2)$ is spanned, $A(C,P)$ is a linear subspace with codimension $2$ of $\vert \Ii _C(2)\vert$. Since
$\dim (C)=1$ and $T$ is general, we get $T\notin A(C,P)$ for all $P\in C$. Hence $T$ is smooth. \qed

\begin{proof}[Proof of lemma \ref{l2}]
Fix $T$ as in {\it{Claim 2}}. As in the proof of Proposition \ref{b1}, since $C$ is an integral curve of $T$, but not a line, $\deg (C)=6$ and $p_a(C)=2$, we can represent $C$ by $(a,b_1, \dots, b_5)\in \ZZ^6$ with $a>0$, $a\ge b_1+b_2+b_3$, $b_1\ge b_2\ge b_3\ge b_4\ge b_5\ge 0$,
\begin{equation}\label{eqvaa3}
6 = 3a-b_1-b_2-b_3-b_4-b_5 \ ,
\end{equation}
\begin{equation}\label{eqvaa4}
a^2 = 8+\sum _{i=1}^{5} b_i^2.
\end{equation}
(see \cite{Hartshorne}, V, 4.12 and Ex. 4.8, or \cite{h1}, equation (2) at page 303, for the the case of a cubic surface). Obviously $(4,2,1,1,1,1)$and $(5,2,2,2,2,1)$ are solutions of (\ref{eqvaa3}) and (\ref{eqvaa4}).

 Fix another solution $(a,b_1,b_2,b_3,b_4,b_5)$. Since no plane curve of degree $\le 3$ have normalization of genus
$2$ we may assume $a\ge 4$. We may assume $b_1>0$, because no smooth plane curve has genus 2.
Set $\epsilon := a-b_1-b_2-b_3$. We have $\epsilon \ge 0$ and $2a > b_1+b_2+b_3+b_4+b_5$. Hence it is sufficient to check all cases with $4 \le a \le 5$. First
assume $a=4$. We cannot have $b_1\ge 3$ by (\ref{eqvaa4}). We also cannot have $b_1\le 1$, because they have $b_1+b_2+b_3+b_4+b_5 \le 5$. Hence
$b_1=2$. The genus formula for plane curves gives $b_2\le 1$ and thus $b_2=b_3=b_4=b_5 = 1$. Now assume $a=5$. Since $C$ is not rational,
we have $b_1\le 3$. Since $b_1+b_ 2+b_3+b_4+b_5 =9$, we have $b_1\ge 2$. First assume $b_1=3$. The genus formula for plane curves gives $b_2\le 1$.
Hence $b_1+b_ 2+b_3+b_4+b_5 \le 7$, a contradiction. Now assume $b_1=2$. Since $b_1+b_ 2+b_3+b_4+b_5 =9$, we get $b_2=b_3=b_4=2$ and $b_1=1$.

Thus we may see $C$ as a subcurve of $T$ of either type $(4,2,2,1,1,1)$ or of type $(5,2,2,2,2,1)$.
First
assume $C\in \vert \Oo _T(4,2,1,1,1,1)\vert$. Take a general $D\in \vert \Oo _T(3,1,1,1,1)\vert$
and a general $J\in \vert \Oo _T(1,1,0,0,0,0)\vert$. Notice that $D\in \Delta$ and $J$ is embedded in $T\subset Q\subset \PP^4$
as a smooh conic and $D\cup J\in \Delta ''$. We may deform $C$ to $D\cup J$ inside $T$. Hence we may deform $D\cup J$ inside
$Q$. Now assume $C\in \vert \Oo _T(5,2,2,2,2,1)\vert$. Take a general $D\in \vert \Oo _T(3,1,1,1,1,1) \vert$
and a general $J\in \vert \Oo _T(2,1,1,1,1,0)\vert$. Notice that $D\in \Delta$, $J$ is embedded in $T\subset Q\subset \PP^4$
as a smooh conic and $D\cup J\in \Delta ''$. As in the proof of lemma \ref{l1}, this is sufficient to prove lemma \ref{l2}.
\end{proof}

\begin{proposition}\label{c1}
Every spanned vector bundle on $Q$ with $(c_1,c_2,c_3;r)=(2,6,8;8)$ is isomorphic to $\Phi \oplus \Phi$.
\end{proposition}

Let $\Ww$ be the set of all smooth and connected $C\subset Q$ such that $\deg (C)=6$, $p_a(C)=2$ and $C$ is linearly normal in $\PP^4$. We see $\Ww$ as an open subset of the Hilbert
scheme $\mbox{Hilb}(Q)$ of $Q$. Since
$h^0(C,\omega _C(1)) = 7$, as in Proposition \ref{c1} it is sufficient to prove that for every $C\in \Ww$ we have $\Ee\simeq \Phi\oplus \Phi$ in the exact sequence
$$0\to \Oo _Q^{\oplus 7} \to \Ee \to \Ii _C(2)\to 0.$$
By Theorem 6.3 in \cite{AO} with $(t,j)=(0,1)$ and $\Ff =\Ee^\vee$, it is enough to prove that
$$h^1(\Ee^{\vee}(-2))=h^1(\Ee^{\vee})=h^2(\Ee^{\vee}\otimes \Sigma (-3))=0$$
so that $\Ee$ would have $h^1(\Ee^\vee (-1))=h^1(\Oo_C)=2$ factors of $\Phi$.

Note that $h^1(\Ee^\vee (-2))=h^2(\Ee(-1))=h^2(\Ii_C(1))=h^1(\Oo_C(1))=0$ and $h^1(\Ee^\vee)=h^2(\Ee(-3))$. Since $h^3(\Ee(-3))=h^0(\Ee^\vee)=0$, so we have $h^2(\Ee(-3))=h^2(\Ii_C(-1))-7=h^1(\Oo_C(-1))-7=h^0(\Oo_C(1)\otimes \omega_C)-7=7-7=0$.

So it is sufficient to prove $h^2(\Ee^\vee \otimes \Sigma (-3))=h^1(\Ii_C\otimes \Sigma (1))=0$ for every $C\in \Ww$. Set $\Ww ':= \{C\in \Ww :h^1(\Ii _C\otimes \Sigma (1)) =0\}$.
By the semicontinuity theorem for cohomology, $\Ww '$ is an open subset of $\Ww$. Taking $7$ general sections of $\Phi \oplus \Phi$ we get
$\Ww ' \ne \emptyset$. Take $\Ee$ in the closure of $\Ww '$ in $\Ww$. By the
semicontinuity theorem for cohomology we get non-zero maps $u: \Phi \to \Ee$ and
$v: \Ee \to \Phi$ such that $v\circ u = 0$. Since $\Ee$ is spanned and with no trivial factor, any torsion free quotient of $\Ee$ has
positive $c_1$. Let $\Theta $ be the saturation of $u(\Phi)$. Since $\Ee /\Theta$ has positive $c_1$, we get
$c_1(\Theta )\le 1$. Since $\Phi $ is a stable reflexive sheaf, we first get that $u$ is injective and then get
$\Theta = u(\Phi )$. Let $v': \Ee /u(\Phi ) \to \Phi$ the map induced by $v$. Since $\Phi$ is stable and $c_1(\Phi )=1$, we get
that $v'$ is injective. Since $c_i(\Ee /u(\Phi )) = c_i(\Phi )$ for all $i$, we get that $\Ee$ is an extension of $\Phi$ by $\Phi$. Hence
$\Ee \cong \Phi\oplus \Phi$. Hence by lemma \ref{l2}, we have $\Ww ' = \Ww$, concluding the proof of Proposition \ref{c1}.
\end{proof}

In the case of $c_2=8$, we do not have any possible decomposable vector bundle in the list (\ref{dec}), so every vector bundle in the general extension (\ref{eqa7}) is indecomposable if $r\leq 3+h^1(\Ff^{\vee})$=13. Thus we have the followings :

\begin{proposition}\label{c4}
There exists a globally generated and indecomposable vector bundle $\Ee$ of rank $r\geq 4$ on $Q$ with the Chern classes $(c_1, c_2)=(2,k)$, $k\geq 4$ and $h^0(\Ee(-1))=0$ if and only if we have
\begin{equation}
\left\{
\begin{array}{ll}
 k=5 ~~;~~ 4\leq r \leq 5\\
 k=6 ~~;~~ 4\leq r \leq 7\\
 k=8 ~~;~~ 4\leq r \leq 13.
 \end{array}
 \right.
 \end{equation}
\end{proposition}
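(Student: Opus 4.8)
The plan is to assemble the statement from the cases already settled, supplying only the bound for $c_2=8$. For the necessity of the list I would proceed as follows: Proposition \ref{c3} eliminates $c_2=7$ and $c_2\ge 9$ at once, since no globally generated bundle of rank $>3$ exists there; the earlier lemma ruling out globally generated indecomposable bundles of rank $\ge 4$ with $(c_1,c_2)=(2,4)$ and $H^0(\Ee(-1))=0$ disposes of $c_2=4$; Proposition \ref{indecom5} settles $c_2=5$, giving $4\le r\le 5$; and the earlier proposition on bundles of rank $\ge 4$ with $(c_1,c_2)=(2,6)$ gives $4\le r\le 7$. All these include the corresponding existence statements, so everything reduces to the single case $c_2=8$, where I must prove existence for $4\le r\le 13$ and non-existence for $r\ge 14$.

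For $c_2=8$ the rank-$3$ bundle $\Ff$ in (\ref{eqa7}) has $(c_1,c_2,c_3)=(2,8,16)$; since $H^0(\Ee(-1))=0$ it is one of the stable rank-$3$ bundles constructed earlier, whose associated curve $C$ is a smooth irreducible curve of genus $5$ and degree $8$, for which the value $\alpha(\Ff)=h^1(\Ff^\vee)=h^2(\Ff(-3))=10$ has already been computed. Non-existence for $r\ge 14$ is then immediate from Lemma \ref{uuu}(ii): since $r-3>10=\alpha(\Ff)$, every $\Ee$ fitting in (\ref{eqa7}) splits off a copy of $\Oo_Q$, and as $r\ge 4$ with $c_1(\Ee)=2\ne 0$ such an $\Ee$ is decomposable. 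For existence with $4\le r\le 13$ I would take $\Ee$ a general extension in (\ref{eqa7}) built from such an $\Ff$: it is a rank-$r$ vector bundle, globally generated because $h^1(\Oo_Q)=0$, with $h^0(\Ee(-1))=h^0(\Ff(-1))=h^0(\Ii_C(1))=0$ since $C$ spans $\PP^4$; and since $1\le r-3\le\alpha(\Ff)$, Lemma \ref{uuu}(ii) ensures that the general such $\Ee$ has no trivial direct factor.

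The step I expect to be the real content here is ruling out a decomposition $\Ee\cong\Ee_1\oplus\Ee_2$ with neither summand trivial; the idea is the one behind the list (\ref{dec}). In such a decomposition each $\Ee_i$ is globally generated with no trivial factor, and since a globally generated bundle with vanishing first Chern class is trivial we must have $c_1(\Ee_1)=c_1(\Ee_2)=1$; Proposition \ref{5.6} then forces each $\Ee_i$ into the set $\{\Oo_Q(1),\Sigma,\Aa,\Phi\}$, and the $c_2$-additivity computation used to derive (\ref{dec}) bounds $c_2(\Ee)$ by $6$, contradicting $c_2(\Ee)=8$. Hence $\Ee$ is indecomposable, which finishes $c_2=8$ and the proposition. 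Everything outside this last $c_2$-estimate is routine bookkeeping over the earlier propositions plus one application of Lemma \ref{uuu} with the explicit number $\alpha(\Ff)=10$; the estimate itself is a mild sharpening, to a strict inequality, of the one already used to produce (\ref{dec}).
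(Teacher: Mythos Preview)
Your proposal is correct and follows essentially the same approach as the paper: the cases $c_2\in\{4,5,6,7\}$ and $c_2\ge 9$ are handled by the earlier results you cite, and for $c_2=8$ the paper argues exactly as you do---since the list (\ref{dec}) contains no decomposable bundle with $c_2=8$, the general extension (\ref{eqa7}) is indecomposable whenever $r\le 3+h^1(\Ff^\vee)=13$, while Lemma \ref{uuu} forces a trivial factor when $r\ge 14$. Your write-up is somewhat more detailed than the paper's one-line justification, but the content is the same.
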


\providecommand{\bysame}{\leavevmode\hbox to3em{\hrulefill}\thinspace}
\providecommand{\MR}{\relax\ifhmode\unskip\space\fi MR }
\providecommand{\MRhref}[2]{%
  \href{http://www.ams.org/mathscinet-getitem?mr=#1}{#2}
}
\providecommand{\href}[2]{#2}


\begin{thebibliography}{10}

	
\bibitem{AO}
V. Ancona and G. Ottaviani, \emph{Some applications of {B}eilinson's theorem to projective spaces and quadrics}, Forum Math. \textbf{3} (1991), no. ~2, 157--176. \MR{1092580 (92e:14039)}	
	
\bibitem{am}
C.~Anghel and N.~Manolache, \emph{Globally generated vector bundles on
  $\mathbb{P}^n$ with $c_1=3$}, Preprint, arXiv:1202.6261 [math.AG], 2012.

\bibitem{Arrondo}
E. Arrondo, \emph{A home-made {H}artshorne-{S}erre correspondence}, Rev.
  Mat. Complut. \textbf{20} (2007), no.~2, 423--443. \MR{2351117 (2008g:14084)}

\bibitem{BHM}
E. Ballico, S. Huh,  and F.~Malaspina, \emph{Globally generated vector bundles of
  rank 2 on a smooth quadric threefold}, Preprint, arXiv : 1211.1100 [math.AG], 2012.

\bibitem{BC}
C. B{\u{a}}nic{\u{a}} and J. Coand{\u{a}}, \emph{Existence of rank
  {$3$} vector bundles with given {C}hern classes on homogeneous rational
  {$3$}-folds}, Manuscripta Math. \textbf{51} (1985), no.~1-3, 121--143.
  \MR{788675 (87c:14015)}

\bibitem{Chang}
M.-C. Chang, \emph{A filtered {B}ertini-type theorem}, J. Reine Angew. Math.
  \textbf{397} (1989), 214--219. \MR{993224 (90i:14054)}

 \bibitem{ce}
L. Chiodera and P.~Ellia, \emph{Rank two globally generated vector bundles with
$c_1 \leq 5$}, Rend. Istit. Mat. Univ. Trieste \textbf{44} (2012), 1--10. 

\bibitem{d}
M. Demazure, H.~C. Pinkham, and B. Teissier (eds.),
  \emph{S\'eminaire sur les {S}ingularit\'es des {S}urfaces}, Lecture Notes in
  Mathematics, vol. 777, Springer, Berlin, 1980, Held at the Centre de
  Math{\'e}matiques de l'{\'E}cole Polytechnique, Palaiseau, 1976--1977.
  \MR{579026 (82d:14021)}

 \bibitem{e}
P.~Ellia, \emph{Chern classes of rank two globally generated vector bundles on
  $\mathbb{P}^2$}, Preprint, arXiv:1111.5718 [math.AG], 2011.


\bibitem{f}
T.~Fujita, \emph{Defining equations for certain types of polarized varieties},
  Complex analysis and algebraic geometry, Iwanami Shoten, Tokyo, 1977,
  pp.~165--173. \MR{0437533 (55 \#10457)}

\bibitem{gl}
M.~Green and R.~Lazarsfeld, \emph{Some results on the syzygies of finite sets
  and algebraic curves}, Compositio Math. \textbf{67} (1988), no.~3, 301--314.
  \MR{959214 (90d:14034)}

\bibitem{hh}
R.~Hartshorne and A.~Hirschowitz, \emph{Smoothing algebraic space curves},
  Algebraic geometry, {S}itges ({B}arcelona), 1983, Lecture Notes in Math.,
  vol. 1124, Springer, Berlin, 1985, pp.~98--131. \MR{805332 (87h:14023)}

\bibitem{Hartshorne}
R. Hartshorne, \emph{Algebraic geometry}, Springer-Verlag, New York, 1977,
  Graduate Texts in Mathematics, No. 52. \MR{0463157 (57 \#3116)}

\bibitem{Hartshorne1}
\bysame, \emph{Stable reflexive sheaves}, Math. Ann. \textbf{254} (1980),
  no.~2, 121--176. \MR{597077 (82b:14011)}

\bibitem{h1}
\bysame, \emph{Genre de courbes alg\'ebriques dans l'espace projectif
  (d'apr\`es {L}. {G}ruson et {C}. {P}eskine)}, Bourbaki {S}eminar, {V}ol.
  1981/1982, Ast\'erisque, vol.~92, Soc. Math. France, Paris, 1982,
  pp.~301--313. \MR{689536 (84f:14023)}

  \bibitem{huh}
S. Huh, \emph{On triple {V}eronese embeddings of {$\Bbb P^n$} in the
  {G}rassmannians}, Math. Nachr. \textbf{284} (2011), no.~11-12, 1453--1461.
  \MR{2832657 (2012g:14021)}

\bibitem{m}
N.~Manolache, \emph{Globally generated vector bundles on $\mathbb{P}^3$ with
  $c_1=3$}, Preprint, arXiv:1202.5988 [math.AG], 2012.


\bibitem{mc}
J. McCleary, \emph{User's guide to spectral sequences}, Mathematics Lecture
  Series, vol.~12, Publish or Perish Inc., Wilmington, DE, 1985. \MR{820463
  (87f:55014)}

\bibitem{OSS}
C. Okonek, M. Schneider, and H. Spindler, \emph{Vector bundles
  on complex projective spaces}, Progress in Mathematics, vol.~3, Birkh\"auser
  Boston, Mass., 1980. \MR{561910 (81b:14001)}


\bibitem{OS}
G. Ottaviani and M. Szurek, \emph{On moduli of stable {$2$}-bundles
with small {C}hern classes on {$Q_3$}}, Ann. Mat. Pura Appl. (4) \textbf{167}
(1994), 191--241, With an appendix by Nicolae Manolache. \MR{1313556
(96c:14010)}

\bibitem{sierra}
J.-C. Sierra, \emph{A degree bound for globally generated vector
  bundles}, Math. Z. \textbf{262} (2009), no.~3, 517--525. \MR{2506304
  (2010i:14076)}

\bibitem{SU}
J.-C. Sierra and L. Ugaglia, \emph{On globally generated vector
  bundles on projective spaces}, J. Pure Appl. Algebra \textbf{213} (2009),
  no.~11, 2141--2146. \MR{2533312 (2010d:14062)}

  \bibitem{SU2}
J.-C. Sierra and L. Ugaglia, \emph{On globally generated vector
  bundles on projective spaces ii}, Preprint, arXiv:1203.0185 [math.AG], 2012.


\end{thebibliography}
\end{document}